\documentclass[11pt]{article}

\usepackage{fullpage}

\usepackage{tikz-cd}
\usetikzlibrary{arrows}
\usepackage{enumitem}

\usepackage{amsthm}
\usepackage{amssymb}
\usepackage{amsmath}
\usepackage{amsxtra}
\usepackage{amsfonts}
\usepackage[all,2cell]{xy}
\usepackage{mathrsfs}
\usepackage{hyperref}
\hypersetup{
  bookmarksnumbered=true,
  colorlinks=true,
  linkcolor=blue,
  citecolor=blue,
  filecolor=blue,
  menucolor=blue,
  pagecolor=blue,
  urlcolor=blue,
  pdfnewwindow=true,
  pdfstartview=FitBH}
\usepackage{subfigure}
\usepackage{mathabx}
\usepackage{euscript}
\usepackage[bbgreekl]{mathbbol}
\usepackage{adjustbox}

\usepackage{graphics}

\usepackage[
backend=biber,
style=alphabetic,
]{biblatex}
\bibliography{refs}

\newsavebox{\pullback}
\sbox\pullback{%
\begin{tikzpicture}%
\draw (0,0) -- (1ex,0ex);%
\draw (1ex,0ex) -- (1ex,1ex);%
\end{tikzpicture}}

\newsavebox{\pushout}
\sbox\pushout{%
\begin{tikzpicture}%
\draw (1ex,0ex) -- (1ex,1ex);%
\draw (0ex,1ex) -- (1ex,1ex);%
\end{tikzpicture}}

\numberwithin{equation}{subsection}

\newtheoremstyle{customtheorem}
{10pt}                
{5pt}                
{}        
{}                
{\bfseries}       
{.}               
{ }               
{}                
\theoremstyle{customtheorem}

\newtheorem{thm}[equation]{Theorem}    
\newtheorem{lem}[equation]{Lemma}          
\newtheorem{cor}[equation]{Corollary} 
\newtheorem{prop}[equation]{Proposition}
\newtheorem{rem}[equation]{Remark}
\newtheorem{notat}[equation]{Notation}
\newtheorem{ex}[equation]{Example} 
\newtheorem{defn}[equation]{Definition}    

\setlist[enumerate,1]{label={(\arabic*)}}

\def\Ae{\EuScript{A}}
\def\Be{\EuScript{B}}
\def\Ce{\EuScript{C}}
\def\De{\EuScript{D}}

\def\Xe{\EuScript{X}}

\def\Le{\EuScript{L}}
\def\Ke{\EuScript{K}}
\def\He{\EuScript{H}}
\def\Me{\EuScript{M}}
\def\Se{\EuScript{S}}

\def\Oc{\mathcal{O}}

\def\Cc{\mathcal{C}}
\def\Fc{\mathcal{F}}
\def\Gc{\mathcal{G}}

\def\C{\mathbb{C}}

\def\Z{\mathbb{Z}}
\def\P{\mathbb{P}}

\title{Semiorthogonal decompositions of stable $\infty$-categories}
\author{Rio Haeussler Albi}

\begin{document}

\maketitle

\begin{abstract}
    We define semiorthogonal decompositions of stable $\infty$-categories of length $n$, extending the theory of semiorthogonal decompositions presented in \cite{dyck_spherical}. Prior to this, we explicitly construct an equivalence of $\infty$-categories relating Waldhausen diagrams to coherent complexes in stable $\infty$-categories. This allows us to view semiorthogonal decompositions from two different perspectives, each of which has its unique advantages and disadvantages. Under mild conditions, we prove a reconstruction theorem for semiorthogonal decompositions, recovering a stable $\infty$-category as the (op)lax limit of a diagram formed by the subcategories constituting its decomposition. We apply this reconstruction in the case of Beilinson's exceptional collection, to obtain a reconstruction of $D^b(\text{Coh}(\P^n))$.
\end{abstract}

\tableofcontents

\section*{Introduction}
\addcontentsline{toc}{section}{\protect\numberline{}Introduction}%

In algebraic geometry, derived categories of coherent sheaves, denoted typically by $D^b(\text{Coh}(X))$ for a noetherian scheme $X$, serve as fundamental invariants for the underlying scheme. \textit{Semiorthogonal decompositions} (also \textit{SODs}) of such derived categories were first introduced in \cite{bondal-kapranov} and provide a way to break them into simpler, more manageable pieces, thereby offering a powerful tool to gain a better understanding of the geometry of $X$.
The geometric significance of this formalism was then further demonstrated in \cite{bondal-orlov} by Bondal and Orlov, who analyzed how the theory behaves with respect to certain birational operations such as blow-ups, flips and flops. This line of research has since developed into a substantial program, surveyed in \cite{kuznetsov}.

However, the theory of semiorthogonal decompositions has classically taken place at the level of triangulated categories, an environment which is unfortunately burdened by technical defects such as the non-functoriality of the cone construction. This limitation has motivated the development of enhancements of triangulated categories, e.g. stable $\infty$-categories or dg-categories, which has emerged largely in the last two decades. At the price of a much more elaborate framework, we choose to use the language of \textit{stable $\infty$-categories} as our way to enhance triangulated categories, in order to gain access to the powerful tools of higher category theory, developed in \cite{lurie-htt, kerodon}. Especially, our perspective tremendously benefits from the universal properties that become available by adopting this environment. Yet, it is to be emphasized that the price we pay is not to be underestimated. Several results that are immediate at the level of triangulated categories require substantial work in the $\infty$-categorical setting: for instance, the equivalence between Waldhausen diagrams and coherent complexes (§\ref{subsec:waldhausen}.\ref{par:equiv}), which on homotopy categories is close to trivial, occupies several pages of carefully manipulating diagram categories.\\

The main goal of this paper is to extend the framework of semiorthogonal decompositions of stable $\infty$-categories of length $1$ developed in \cite{dyck_spherical} by considering semiorthogonal decompositions of any finite length. We note that, in the terminology of \cite{dyck_spherical}, \textit{coCartesian semiorthogonal decompositions} of stable $\infty$-categories were independently defined under the name \textit{recollement} in \cite[Appendix A.8]{lurie-ha}. Furthermore, we want to acknowledge that a much more general and sophisticated theory of so-called \textit{stratifications} over arbitrary posets was developed in \cite{stratified}. Our goal is to provide an approachable discussion of the more restrictive, but also more concrete situation of semiorthogonal decompositions, in the spirit of \cite[Ch. 2]{dyck_spherical}.\\

As an application of our results, we explain how to recover a version of a classical result of Beilinson, using completely different methods. Beilinson's exceptional collection
\[
\Oc_{\P^n}, \Oc_{\P^n}(1),\dots, \Oc_{\P^n}(n)
\]
on projective space $\P^n$, see \cite{beilinson} yields a semiorthogonal decomposition of $D^b(\text{Coh}(\P^n))$. Beilinson used the exceptional collection to prove that the category $D^b(\text{Coh}(\P^n))$ is equivalent to the category $D^b(\text{Rep}_k(Q))$, where $Q$ denotes the quiver
\[
\begin{tikzcd}[column sep = large]
	0 \ar[r,"x^{(1)}_0", shift left = 2, bend left] \ar[r, shift left = 1, draw=none, "\vdots" description] \ar[r,"x^{(1)}_n"' , shift right = 2, bend right] & 1 \ar[r,"x^{(2)}_0", shift left = 2, bend left] \ar[r, shift left = 1, draw=none, "\vdots" description] \ar[r,"x^{(2)}_n"' pos= 0.51 , shift right = 2, bend right] & \cdots \ar[r,"x^{(n)}_0", shift left = 2, bend left] \ar[r, shift left = 1, draw=none, "\vdots" description] \ar[r,"x^{(n)}_n"' pos= 0.49 , shift right = 2, bend right] & n
\end{tikzcd}
\]
with commutativity relations
\[
x_i^{(t+1)}x^{(t)}_j = x_j^{(t+1)}x^{(t)}_i
\]
for $0 \leq i,j \leq n$ and $1\leq t \leq n-1$. With our techniques, we reproduce this result in a slightly different form: For $n = 2$, our version of the result states that $D^b(\text{Coh}(\P^n))$ is obtained as the \textit{oplax limit} of the diagram
\[\begin{tikzcd}[column sep = small]
	&& {D^b(\text{Vect}_k^\text{fin})} \\
	{D^b(\text{Vect}_k^\text{fin})} && {} && {D^b(\text{Vect}_k^\text{fin}).}
	\arrow["{\oplus \{x,y,z\}}", from=1-3, to=2-5]
	\arrow["{\oplus \{x,y,z\}}", from=2-1, to=1-3]
	\arrow["{\oplus \{x^2,y^2,z^2,xy,xz,yz\}}"', from=2-1, to=2-5]
	\arrow[shorten <=3pt, shorten >=3pt, Rightarrow, from=2-3, to=1-3]
\end{tikzcd}
\]
Indeed, the latter is a very natural description of $D^b(\text{Rep}_k(Q))$: For instance, if we omit '$D^b$', then the oplax limit of the resulting diagram is by definition the category $\text{Rep}_k(Q)$.\\
Hence, we expect our methods to yield similar gluing results in various examples.\\

The paper consists of two chapters.\\
In Chapter 1, we recall the relevant $\infty$-category theory and the basic theory of stable $\infty$-categories (§\ref{subsec:stable}). Moreover, we discuss locally (co)Cartesian fibrations and explain the Grothendieck construction, used to describe an $\infty$-functor implicitly in terms of a (co)Cartesian fibration (§\ref{groth-cons}). In particular, this enables us to define adjunctions of stable $\infty$-categories.

Chapter 2 constitutes the main part of the paper and is devoted to the definition and investigation of  semiorthogonal decompositions of length $n$. To obtain a formally clean definition, we utilize the $\infty$-categories of Waldhausen diagrams and coherent complexes of stable $\infty$-categories, which we prove to be equivalent  (§\ref{subsec:waldhausen}). We then proceed to define SODs and establish their elementary properties (§\ref{subsec:sod}). Furthermore, we discuss SODs which are \textit{locally (co)Cartesian}, a generalization of the notion of recollement, and characterize this property in terms of the existence of certain adjoints. In particular, we prove that such SODs satisfy a reconstruction/gluing theorem. Finally, as mentioned above, we apply our theory to Beilinson's exceptional collection on $\P^n$ (§\ref{subsec:beilinson}).

\subsection*{Acknowledgements}
I would like to deeply thank my supervisor Tobias Dyckerhoff for suggesting this project and for numerous helpful discussions throughout its development. Further, I am grateful to Muskan Abbas and Dhruv Kulshreshtha for valuable feedback. This work is based on the author's Bachelor's thesis.

\section{Preliminaries}

\subsection{(Stable) $\infty$-categories}\label{subsec:stable}

In this section, we briefly recall the basic language of $\infty$-categories (see \cite{joyal, lurie-htt}) and review the basic framework of stable $\infty$-categories (\cite{lurie-ha}), which we use throughout this paper.

\paragraph{Terminology.} We denote by $\text{Set}_\Delta$ the category of simplicial sets. For $n\geq 0$ and $0 \leq i \leq n$, we denote by $\Delta^n$ the standard $n$-simplex and by $\Lambda_i^n$ its $i$-th horn. The full subcategory of $\text{Set}_\Delta$ spanned by Kan complexes is denoted by $\text{Kan}$. Given a small category $\Cc$, we write $\text{N}\Cc$ to refer to its nerve and given a simplicial set $S$, we write $\text{h}S$ to refer to its homotopy category.

The nerve construction admits a generalization to \textit{simplicial categories}, i.e. categories enriched in $\text{Set}_\Delta$ (we denote by $\text{Set}_\Delta\text{-}\text{Cat}$ the category of small simplicial categories). Namely, one can define a functor
\[
\text{N}^\Delta : \text{Set}_\Delta\text{-}\text{Cat} \longrightarrow \text{Set}_\Delta
\]
called the \textit{simplicial nerve construction}, such that restriction to categories enriched in discrete simplicial sets yields the ordinary nerve functor. This functor further carries simplicial categories $C$ such that $\text{Hom}_C(X,Y)$ is a Kan complex for any $X,Y\in C$, to $\infty$-categories. As the specific construction will not play a role in this work, we refer to \cite[§1.1.5]{lurie-htt} for more details.

\paragraph{$\infty$-categories.}\label{infty-cat} Throughout this work, we will mean by an \textit{$\infty$-category} $\EuScript C$ a \textit{quasicategory} in the sense of \cite{joyal}, i.e. a simplicial set that satisfies the \textit{inner horn filling condition}: For $n> 0$ and $0 < i <n$, every morphism of simplicial sets $\Lambda_i^n \rightarrow \EuScript C$ can be extended to an $n$-simplex $\Delta^n \rightarrow \EuScript C$. Note that Kan complexes (also called \textit{$\infty$-groupoids}) and nerves of ordinary categories are $\infty$-categories.

A convenient feature of this approach to $\infty$-categories is that a \textit{functor between $\infty$-categories $\EuScript C, \EuScript D$} (also \textit{$\infty$-functor}) is simply a morphism of simplicial sets $f : \EuScript C \rightarrow \EuScript D$. Functors between $\infty$-categories $\Ce,\De$ naturally form an $\infty$-category $\text{Fun}(\Ce,\De)$ (see \cite[§2.2.5]{lurie-htt}). Commonly, we will consider functors of the form $\text{N}\mathcal I \rightarrow \Ce$, for an ordinary category $\mathcal I$. In this case, we simply write $\text{Fun}(\mathcal I,\EuScript D)$ to refer to the $\infty$-category of such functors.

Let $\Ce$ be an $\infty$-category. For $x,y \in \Ce$, there is a simplicial set $\text{Map}_\Ce(x,y)$ called the \textit{mapping space from $x$ to $y$}, defined via the iterated fiber product
\[
\{x\} \times_{\text{Fun}(\{0\},\EuScript C)} \text{Fun}(\Delta^1, \EuScript C) \times_{\text{Fun}(\{1\},\EuScript C)} \{y\}
\]
formed in the category of simplicial sets. It can be shown that $\text{Map}_\EuScript C(x,y)$ is a Kan complex, which justifies thinking of this construction as a space. While there is no preferred choice of a composition law
\[
\circ : \text{Map}_\EuScript C(y,z) \times \text{Map}_\EuScript C(x,y) \longrightarrow \text{Map}_\EuScript C(x,z),
\]
it is well-defined up to homotopy, i.e. as a morphism in the homotopy category $\text{hKan}$.

\paragraph{Limits and cofinality.}
Let $\Ce$ be an $\infty$-category. An object $x \in \EuScript C$ is \textit{initial} (resp. \textit{final}) if for all objects $y \in \EuScript C$, the mapping space $\text{Map}_\EuScript C(x,y)$ (resp. $\text{Map}_\EuScript C(y,x)$) is a contractible Kan complex. Using the $\infty$-categorical analogues of over- and undercategories, all universal constructions from ordinary category theory generalize to $\infty$-categories. The following constructions will appear throughout this paper:
\begin{itemize}
    \item Limits and colimits (see \cite[§1.2.13]{lurie-htt}).
    \item Left and right Kan extensions (see \cite[§4.3]{lurie-htt}).
    \item Left and right adjoints (see \cite[§5.2.2]{lurie-htt}).
\end{itemize}

A useful tool for computing limits and colimits is the notion of cofinality. A morphism of simplicial sets $f: A \rightarrow \Ce$ will be called \textit{left cofinal} (resp. \textit{right cofinal}) if for every object $x$ of $\Ce$, the simplicial set $A \times_\Ce \Ce_{/x}$ (resp. $A \times_\Ce \Ce_{x/}$) is weakly contractible.

\begin{lem}{\cite[\href{https://kerodon.net/tag/02XR}{Tag 02XR}]{kerodon}}
    Let $\Ce$ be an $\infty$-category and $e: A \to B$ a left cofinal morphism. Then a morphism $F : B^\lhd \to \Ce$ is a limit diagram if and only if
    \[
    A^\lhd \overset{e^\lhd}\longrightarrow B^\lhd \overset{F}\longrightarrow \Ce
    \]
    is a limit diagram. \qed
\end{lem}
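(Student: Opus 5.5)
We would reduce the statement to a comparison of limits computed over slice $\infty$-categories. Write $x = F(-\infty)$ for the value of $F$ at the cone point, and let $p = F|_B$. By definition, $F$ is a limit diagram exactly when the induced map $\Ce_{/F} \to \Ce_{/p}$ is a trivial Kan fibration. Equivalently, for every object $c \in \Ce$, the map
\[
\text{Map}_\Ce(c,x) \longrightarrow \text{Map}_{\text{Fun}(B,\Ce)}(\underline{c}, p)
\]
should be a homotopy equivalence, where $\underline{c}$ denotes the constant diagram. The analogous statement holds for $A^\lhd$ with $p\circ e$. Since the cone point of $A^\lhd$ goes to the cone point of $B^\lhd$, the two candidate limit cones share the vertex $x$. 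The composite $\text{Map}(c,x)\to \text{Map}(\underline c,p)\to \text{Map}(\underline c, p\circ e)$ is the comparison map for $F\circ e^\lhd$. By two-out-of-three, it therefore suffices to prove one thing: restriction along $e$ induces an equivalence
\[
\Ce_{/p} \longrightarrow \Ce_{/p\circ e},
\]
or at least an equivalence on fibers over each $c$.

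The key step is this restriction equivalence, and we would obtain it from the cofinality hypothesis. First we would note that left cofinality of $e$ is preserved by forgetting to the level of spaces: the condition that $A\times_B B_{/b}$ is weakly contractible for every $b$ says exactly that $e$ is \emph{initial}. This means that for every functor $q: B\to \Ce$, the limit of $q$ agrees with the limit of $q\circ e$. To make this precise, we would use Quillen's Theorem A in the $\infty$-categorical form, namely Joyal's characterization of initial maps. Under that characterization, weak contractibility of all the fibers $A\times_B B_{/b}$ implies that $e^{op}$ is a right anodyne map up to categorical equivalence, i.e.\ that it factors as a right anodyne map followed by a trivial fibration. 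For any right anodyne (contravariant-equivalence) map $i$, the restriction $\Ce_{/p}\to\Ce_{/p\circ i}$ is a trivial fibration, by the standard lifting argument of \cite[Prop.~4.1.1.8]{lurie-htt}; we would apply this to the right anodyne part of the factorization.

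The main obstacle is this passage from the fiberwise weak contractibility condition to the anodyne/trivial-fibration statement. In the $\infty$-categorical setting this is a genuine theorem, Joyal's version of Quillen's Theorem A, and not a formal consequence of the definitions. We would invoke it from \cite[Thm.~4.1.3.1]{lurie-htt}, using Kerodon's conventions for "left cofinal" and taking care that they match Lurie's "cofinal" after passing to opposites. With that in hand, the remaining steps are formal: the two-out-of-three argument above gives both directions of the equivalence simultaneously. The "if" direction uses that the restriction map is an equivalence, so that the composite being an equivalence forces the first map to be one. The "only if" direction is just composition of equivalences.
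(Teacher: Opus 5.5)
Your outline is the standard argument behind the cited Kerodon result; the paper gives no proof of its own. Left cofinality makes the restriction $\Ce_{/p}\to\Ce_{/p\circ e}$ an equivalence. A limit cone is exactly a final object of such a slice (equivalently, it represents the cone functor), so two-out-of-three gives both directions. Invoking Quillen's Theorem A is also exactly what is needed to pass from the paper's definition of left cofinality (weak contractibility of $A\times_B B_{/b}$, with $B$ an $\infty$-category) to the fibration-theoretic notion used in Kerodon and HTT.

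One step is misstated, however. Restriction along a \emph{right} anodyne map $i$ gives a trivial fibration of \emph{under}categories $\Ce_{p/}\to\Ce_{p\circ i/}$. For overcategories $\Ce_{/p}$ you need $i$ to be \emph{left} anodyne. For instance, $\{1\}\subset\Delta^1$ is right anodyne, but for $p\colon\Delta^1\to\Ce$ the restriction $\Ce_{/p}\to\Ce_{/p(1)}$ is not an equivalence unless $p(0)\to p(1)$ is; in fact $\Ce_{/p}\simeq\Ce_{/p(0)}$. Since you factored $e^{\text{op}}$ rather than $e$, you must either apply your claim to $\Ce^{\text{op}}$, using $(\Ce_{/p})^{\text{op}}=(\Ce^{\text{op}})_{p^{\text{op}}/}$, or factor $e$ itself as a left anodyne map followed by a trivial fibration.

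Two further points about the factorization:
\begin{enumerate}
\item It is not what HTT Thm.~4.1.3.1 says. You additionally need that a cofinal right fibration is a trivial fibration.
\item You never treat the trivial-fibration factor. This part is easy, since restriction along a categorical equivalence is fully faithful on functor categories.
\end{enumerate}
Both detours are unnecessary. HTT Prop.~4.1.1.8, applied directly to the cofinal map $e^{\text{op}}$, shows that $(\Ce^{\text{op}})_{p^{\text{op}}/}\to(\Ce^{\text{op}})_{(p\circ e)^{\text{op}}/}$, i.e.\ $\Ce_{/p}\to\Ce_{/p\circ e}$, is an equivalence.
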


\paragraph{Relative Kan extensions.}
The following technical lemma will be frequently used in proofs.
\begin{lem}{\cite[Prop. 4.3.2.15]{lurie-htt}}\label{lem:relative}
	Suppose we are given a diagram of $\infty$-categories
	\[
	\Ce \rightarrow \De' \overset{p}\leftarrow \De,
	\]
	where $p$ is a categorical fibration. Let $\Ce^0$ be a full subcategory of $\Ce$. Let $\text{Fun}'_{\De'}(\Ce,\De)\subset \text{Fun}_{\De'}(\Ce,\De)$ be the full subcategory spanned by functors $F$ which are $p$-left Kan extensions of $F\vert \Ce^0$ and let $\text{Fun}'_{\De'}(\Ce^0,\De) \subset \text{Fun}_{\De'}(\Ce^0,\De)$ be the full subcategory spanned by functors $F$ such that for each $C \in \Ce$, the induced diagram 
	$\Ce^0_{/C} \to \De$
	has a $p$-colimit. Then the restriction functor
	\[
	\text{Fun}'_{\De'}(\Ce,\De) \longrightarrow \text{Fun}'_{\De'}(\Ce^0,\De)
	\]
	is a trivial Kan fibration.
\end{lem}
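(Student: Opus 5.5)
This is Lurie's HTT Proposition 4.3.2.15. I will prove it by reducing to the case of ordinary left Kan extensions along a full subcategory, with relative colimits in place of ordinary colimits. The proof runs in three steps.

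\textbf{Step 1: the auxiliary category $\mathcal{M}$.} Following the standard argument (HTT 4.3.2), I build the simplicial set $\mathcal{M} \subset \Ce \times \Delta^1$. It is the full subcategory spanned by $\Ce^0 \times \{0\}$ and $\Ce \times \{1\}$, so it is a correspondence from $\Ce^0$ to $\Ce$. A functor $F\colon \Ce \to \De$ over $\De'$ gives a functor $\mathcal{M} \to \De$ over $\De'$ by composing with the projection $\mathcal{M} \to \Ce$. I then factor the restriction functor as
\[
\text{Fun}'_{\De'}(\Ce,\De) \longleftarrow \text{Fun}''_{\De'}(\mathcal{M},\De) \longrightarrow \text{Fun}'_{\De'}(\Ce^0,\De).
\]
Here $\text{Fun}''$ is the full subcategory of functors $G$ satisfying two conditions. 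First, $G$ is a $p$-left Kan extension of $G|_{\Ce^0 \times \{0\}}$. Second, $G$ carries each edge $(C,0)\to(C,1)$ with $C \in \Ce^0$ to an equivalence.

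\textbf{Step 2: the right-hand map is a trivial Kan fibration.} The inclusion $\Ce^0\times\{0\} \subset \mathcal{M}$ has the property that for an object $(C,1)$ the overcategory $(\Ce^0\times\{0\})_{/(C,1)}$ is identified with $\Ce^0_{/C}$. This holds because $(C,1)$ receives no maps from objects over $1$ that lie in the relevant piece. The $p$-left Kan extension along this inclusion is therefore pointwise, since every object of $\mathcal{M}$ not in $\Ce^0\times\{0\}$ is only "above" objects of $\Ce^0\times\{0\}$. This is the case of the lemma handled by HTT 4.3.2.13: it uses extension along one object at a time together with the characterization of $p$-colimits via lifting against $\partial\Delta^n \subset \Delta^n$ over $\De'$. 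I prove it by a skeletal induction: relative colimit conditions give the lifting properties needed for each simplex of $\mathcal{M}$ not in the subcategory, while $p$ being a categorical fibration guarantees that the lifts exist in the relative sense.

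For objects $C\in\Ce^0$, the category $\Ce^0_{/C}$ has a final object $\mathrm{id}_C$. The second condition in $\text{Fun}''$ is therefore automatic, and the map is a trivial fibration onto $\text{Fun}'_{\De'}(\Ce^0,\De)$.

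\textbf{Step 3: the left-hand map is a trivial Kan fibration, and conclusion.} Given a functor on $\Ce\times\{1\}$, its extension to $\mathcal{M}$ is a $p$-right Kan extension along $\Ce\times\{1\}\subset\mathcal{M}$. For $(C,0)$ the undercategory has initial object $(C,1)$, so the right Kan extension condition is exactly the equivalence condition. The same pointwise lemma, applied dually, shows that restriction to $\Ce\times\{1\}$ is a trivial fibration onto its image. It remains to match the left Kan extension conditions. For $C\in\Ce$, the cofinality lemma (Tag 02XR) identifies colimits over $(\Ce^0\times\{0\})_{/(C,1)}$ with colimits over $\Ce^0_{/C}$ computed in $\Ce$. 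Hence $G$ satisfies the first condition if and only if $G|_{\Ce\times\{1\}}$ is a $p$-left Kan extension of its restriction to $\Ce^0$.

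Finally, I choose a section of the left trivial fibration and compose it with the right trivial fibration. The composite recovers restriction up to the equivalence $G(C,0)\simeq G(C,1)$, and I correct this by a homotopy using that trivial fibrations are stable under the relevant pullbacks.

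\textbf{Main obstacle.} The main difficulty is the skeletal induction in Step 2. One must verify carefully that the relative colimit hypothesis supplies exactly the horn lifting conditions $\partial\Delta^n\to\Delta^n$ over $\De'$ needed for each new nondegenerate simplex, ordered by the "height" of its last vertex.
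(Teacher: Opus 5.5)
The paper gives no proof of this lemma; it is quoted from \cite[Prop. 4.3.2.15]{lurie-htt}, so your argument has to stand on its own. Your correspondence $\mathcal{M}$ and the zigzag $\alpha\colon \text{Fun}''_{\De'}(\mathcal{M},\De)\to\text{Fun}'_{\De'}(\Ce,\De)$, $\beta\colon\text{Fun}''_{\De'}(\mathcal{M},\De)\to\text{Fun}'_{\De'}(\Ce^0,\De)$ are a sensible reduction to Kan extensions along a sieve. However, the final step has a genuine gap.

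Two trivial fibrations in a zigzag only show that the restriction functor $R$ is a categorical equivalence. You do not even need a homotopy for that: the section $F\mapsto F\circ\mathrm{pr}$ from your Step 1 satisfies $\beta(F\circ\mathrm{pr})=F|_{\Ce^0}$ on the nose, so $R=\beta\circ\mathrm{pr}^*$. But a trivial fibration precomposed with a section of another trivial fibration need not be a trivial Kan fibration. Already surjectivity of $R$ on vertices fails to follow: for each $F_0$ you need a $p$-left Kan extension restricting to $F_0$ \emph{strictly}. Lifting $F_0$ along $\beta$ and applying $\alpha$ only gives one whose restriction is equivalent to $F_0$ via $G|_{\Ce^0\times\Delta^1}$. ``Correcting by a homotopy'' using stability of trivial fibrations under pullback does not remove this discrepancy.

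What is missing is that $R$ is a categorical fibration. The functor $\text{Fun}_{\De'}(\Ce,\De)\to\text{Fun}_{\De'}(\Ce^0,\De)$ is a pullback of $\text{Fun}(\Ce,\De)\to\text{Fun}(\Ce^0,\De)\times_{\text{Fun}(\Ce^0,\De')}\text{Fun}(\Ce,\De')$. That map is a categorical fibration because $p$ is one and the Joyal model structure is Cartesian. Passing to the full subcategories preserves this, since $p$-left Kan extensions are closed under equivalence. A categorical fibration that is a categorical equivalence is a trivial Kan fibration. This is exactly where the hypothesis on $p$ enters your route. You attribute it instead to the skeletal induction, but there the lifts come from the defining property of $p$-colimits, and an inner fibration would do.

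Two smaller points. In Step 2, a $p$-colimit cone $K^\triangleright\to\De$ solves lifting problems $K\star\partial\Delta^m\subset K\star\Delta^m$ over $\De'$ with the cone point at the \emph{initial} vertex of $\Delta^m$. So the induction should run over the part $\tau\colon\Delta^m\to\Ce\times\{1\}$ of a simplex of $\mathcal{M}$. It must include all such $\tau$, since a nondegenerate simplex of $\mathcal{M}$ can have degenerate $\Ce\times\{1\}$-part. At each step, use the colimit at $\tau(0)$ together with the trivial fibration $\mathcal{M}^0_{/\tau}\to\mathcal{M}^0_{/\tau(0)}$ (with $\mathcal{M}^0=\Ce^0\times\{0\}$, slices formed in $\mathcal{M}$). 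The relevant vertex is the first vertex of $\tau$, not the last.

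In Step 3, comparing the two Kan extension conditions is not a cofinality statement. The index categories $(\Ce^0\times\{0\})_{/(C,1)}$ and $(\Ce^0\times\{1\})_{/(C,1)}$ are both already isomorphic to $\Ce^0_{/C}$. What differs are the diagrams $G(-,0)$ and $G(-,1)$. You need that $p$-colimit cones are invariant under the natural equivalence $G|_{\Ce^0\times\Delta^1}$, or transitivity of Kan extensions through $\Ce^0\times\Delta^1$.
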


By passing to opposite categories, we obtain a dual statement, where $p$-left Kan extensions, overcategories and $p$-colimits are replaced by $p$-right Kan extensions, undercategories and $p$-limits. In most cases, we will only need the following special case for $p : \De \to \Delta^0$.
\begin{lem}\label{lem:2}
    Let $\EuScript C, \EuScript D$ be $\infty$-categories and $\EuScript C_0 \subset \EuScript C$ a full subcategory. Further, let $\text{Fun}'(\EuScript C,\EuScript D) \subset \text{Fun}(\EuScript C,\EuScript D)$ be the full subcategory spanned by functors $F$ which are left Kan extensions of $F\vert_{\EuScript C_0}$ and let $\text{Fun}'(\EuScript C_0,\EuScript D) \subset \text{Fun}(\EuScript C_0,\EuScript D)$ be the full subcategory spanned by functors $F$ such that for every $C \in \EuScript C_0$, the induced diagram
    \[
    {{\EuScript C}_0}_{/C} := \EuScript C_{/C} \times_{\EuScript C} \EuScript C_0 \longrightarrow \EuScript D
    \]
    has a colimit. Then, restriction to $\EuScript C_0$ induces a trivial Kan fibration
    \[
    \pushQED{\qed}
    \text{Fun}'(\EuScript C,\EuScript D) \longrightarrow \text{Fun}'(\EuScript C_0,\EuScript D).\qedhere
    \popQED
    \]
\end{lem}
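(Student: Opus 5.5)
The statement is the special case of Lemma \ref{lem:relative} with $\De' = \Delta^0$ and $p : \De \to \Delta^0$ the unique map, so the plan is to specialize that lemma and check that each of its ingredients reduces to the corresponding ingredient of Lemma \ref{lem:2}.

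\textbf{Hypotheses.} Since $\De$ is an $\infty$-category, the map $p:\De\to\Delta^0$ is a categorical fibration. Indeed, a map to $\Delta^0$ is a categorical fibration exactly when its source is fibrant in the Joyal model structure, that is, when it is a quasicategory. The required diagram $\Ce \to \Delta^0 \leftarrow \De$ consists of the unique maps. We take $\Ce^0 = \Ce_0$.

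\textbf{Identification of the categories.} Next we identify the relative functor categories with the absolute ones. Since $\Delta^0$ is terminal, $\text{Fun}_{\Delta^0}(\Ce,\De) = \text{Fun}(\Ce,\De)$, and likewise $\text{Fun}_{\Delta^0}(\Ce_0,\De)=\text{Fun}(\Ce_0,\De)$. The restriction functor of Lemma \ref{lem:relative} becomes the ordinary restriction functor under these identifications.

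\textbf{Relative notions become absolute.} We then check that, over a point, $p$-colimits are colimits and $p$-left Kan extensions are left Kan extensions.

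For colimits, recall that a cone $\overline q : K^\rhd \to \De$ is a $p$-colimit if
\[
\De_{\overline q/} \to \De_{q/}\times_{\Delta^0_{p q/}} \Delta^0_{p\overline q/}
\]
is a trivial Kan fibration. Over $\Delta^0$ all slices of $\Delta^0$ are $\Delta^0$ (cones in $\Delta^0$ are unique), so the target is $\De_{q/}$. The condition therefore says exactly that $\overline q$ is a colimit in the sense of \cite[§1.2.13]{lurie-htt}.

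For Kan extensions, $F$ is a $p$-left Kan extension of $F|_{\Ce_0}$ when each induced cone $(\Ce_{0/C})^\rhd \to \De$ is a $p$-colimit. By the previous observation this is the same as being a colimit, which is the definition of a left Kan extension. Finally, the existence conditions defining $\text{Fun}'$ coincide: "has a $p$-colimit" becomes "has a colimit". Here the overcategory $\Ce^0_{/C}=\Ce_{/C}\times_\Ce \Ce_0$ is the one in the statement, indexed by $C\in\Ce$.

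\textbf{Conclusion and main obstacle.} With these identifications the full subcategories $\text{Fun}'$ of the two lemmas coincide, and Lemma \ref{lem:relative} gives that restriction is a trivial Kan fibration. The only point requiring care is the identification of $p$-colimits with colimits when the base is $\Delta^0$. This is the remark following \cite[Def. 4.3.1.1]{lurie-htt}, and I would cite it rather than reprove it.
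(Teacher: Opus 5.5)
Your proposal is correct and follows the paper's own route: the paper states this lemma as the special case $p:\De\to\Delta^0$ of Lemma~\ref{lem:relative} with no further argument, and your checks (that $p$ is a categorical fibration, and that $p$-colimits and $p$-left Kan extensions over a point are ordinary colimits and left Kan extensions) just make that specialization explicit. You were also right to index the existence condition by $C\in\Ce$ rather than $C\in\Ce_0$ as printed: for $C\in\Ce_0$ the slice ${\Ce_0}_{/C}$ has the final object $\text{id}_C$, so the printed condition would be vacuous and the lemma as literally stated would fail.
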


Note that if $\EuScript C, \EuScript C_0$ are the nerves of finite posets and $\EuScript D$ is a stable $\infty$-category (see \ref{subsec:stable}\ref{par:stab}), then the colimits always exist so that
\[
\text{Fun}'(\EuScript C_0,\EuScript D) = \text{Fun}(\EuScript C_0,\EuScript D).
\]

\paragraph{The $\infty$-category of $\infty$-categories.} We let $\EuScript Cat_\infty^\simeq$ denote the subcategory of $\text{Set}_\Delta$, where the objects are given by the collection of $\infty$-categories and for $\infty$-categories $\EuScript C, \EuScript D$, the simplicial set $\text{Hom}_{\EuScript Cat_\infty^\simeq} (\EuScript C,\EuScript D)$ is defined to be the \textit{largest Kan complex contained in} $\text{Fun}(\EuScript C,\EuScript D)$, see \cite[Def. 3.0.0.1]{lurie-htt}. Then, $\EuScript Cat_\infty := \text{N}^\Delta(\EuScript Cat_\infty^\simeq)$ defines an $\infty$-category called the \textit{$\infty$-category of $\infty$-categories}. We will sometimes make reference to the $(\infty,2)$-category $\C at_\infty$ of $\infty$-categories, although we do not define this in any formal sense.

\paragraph{Stable $\infty$-categories.}\label{par:stab} Let $\EuScript C$ be an $\infty$-category. We say that $\EuScript C$ is \textit{pointed} if it has a zero object. In this situation, a commutative square $\Delta^1\times \Delta^1 \rightarrow \EuScript C$ of the form
\[
\begin{tikzcd}
    x \ar[r, "f"]\ar[d] & y \ar[d, "g"]\\
    0 \ar[r] & z
\end{tikzcd}
\]
is called a \textit{fiber sequence} if it is a pullback square (in the $\infty$-categorical sense), and a \textit{cofiber sequence} if it is a pushout square.

An $\infty$-category $\EuScript C$ is called \textit{stable} if it satisfies the following conditions:
\begin{enumerate}[label=(\arabic*)]
    \item $\EuScript C$ is pointed.
    \item Every morphism in $\EuScript C$ admits a fiber and a cofiber.
    \item A commutative square $\Delta^1 \times \Delta^1 \rightarrow \EuScript C$ is a fiber sequence if and only if it is a cofiber sequence.
\end{enumerate}
Recall that under these conditions, fibers and cofibers arrange into functors
\begin{gather*}
\text{Fib}: \text{Fun}(\Delta^1, \Ce) \longrightarrow \Ce,\\
\text{Cof}: \text{Fun}(\Delta^1, \Ce) \longrightarrow \Ce.
\end{gather*}

The following is a well-known and convenient characterization of stable $\infty$-categories.

\begin{prop}{\cite[Prop. 1.1.3.4]{lurie-ha}}
    An $\infty$-category $\EuScript C$ is stable if and only if it satisfies the following conditions:
    \begin{enumerate}[label=(\arabic*)]
        \item $\EuScript C$ is pointed.
        \item $\EuScript C$ has finite limits and colimits.
        \item A commutative square $\Delta^1 \times \Delta^1 \rightarrow \EuScript C$ is \textit{Cartesian} (i.e. a pullback square) if and only if it is \textit{coCartesian} (i.e. a pushout square). Such squares are called \textit{biCartesian}.\pushQED{\qed} \qedhere \popQED
    \end{enumerate}
\end{prop}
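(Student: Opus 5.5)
We prove both implications separately, writing (1)--(3) for the three conditions of the proposition and (S1)--(S3) for the three defining conditions of stability.

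\emph{Stable implies (1)--(3).} Condition (1) is (S1). For (2), note first that in a stable $\infty$-category the pushout of any span $y \leftarrow x \rightarrow z$ exists. To see this, form $\text{Cof}(x \to y \oplus z)$; the biproduct $y\oplus z$ exists because $y \times z$ can be built as the pullback of $y \to 0 \leftarrow z$, which exists since it is a fiber of a map to $0$. One then checks via (S3), by pasting pullback/pushout squares, that this cofiber realizes the pushout. Existence of an initial object and of pushouts gives all finite colimits, and dually for finite limits. For (3), given a pullback square $x \to y, x\to z, y,z \to w$, extend it by taking the fiber $f$ of $z \to w$ and the square $f \to x$ over $0 \to y$. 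Pasting two pullbacks shows that $f \to z \to w$ with $0$ is a fiber sequence, hence by (S3) a cofiber sequence. Similarly, the left square $f\to x$, $0\to y$ is a pullback, hence a pushout by (S3). The pasting law for pushouts then gives that the right square is a pushout. The converse direction is dual.

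\emph{(1)--(3) implies stable.} Condition (S1) is (1), and (S2) follows since fibers and cofibers are particular finite limits and colimits. For (S3), a fiber sequence is a pullback square with a corner $0$. By (3) it is a pushout square, i.e. a cofiber sequence, and conversely.

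The main obstacle is the construction of pushouts in the first direction. This requires care with the pasting lemma for $\infty$-categorical pushouts (HTT 4.4.2.1) and the identification of $y\oplus z$. An alternative I would try first is to cite the fact that a pointed category with finite limits in which the suspension and loop functors are inverse equivalences has all finite colimits. Simpler still, I would note that the stated proposition is exactly \cite[Prop. 1.1.3.4]{lurie-ha} and defer to that proof, sketching only the pasting arguments above.
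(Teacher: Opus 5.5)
Your converse direction is fine, and so is your argument for (3). It uses only fibers, (S3) and the pasting lemma, so it does not depend on (2). There is a small slip: if $f$ is defined as the fiber of $z\to w$, then it is the square over $x\to y$, not the outer rectangle, that pasting identifies as a fiber sequence.

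The paper gives no proof of this statement and simply cites \cite[Prop.~1.1.3.4]{lurie-ha}, so your fallback of deferring to Lurie is exactly what it does.

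The problem is your sketch of (2), which is the only substantive part.
\begin{itemize}
\item The claim that $y\times z$ exists ``since it is a fiber of a map to $0$'' is false. A fiber of $a\to b$ is a pullback of $a\to b\leftarrow 0$, and the fiber of $y\to 0$ is just $y$. The cospan $y\to 0\leftarrow z$ is not of this form, so (S2) gives you nothing, and the existence of $y\times z$ is precisely what is at stake.
\item You then assume without argument that this product is also a coproduct, i.e.\ a biproduct.
\item You also assume that $\text{Cof}(x\to y\oplus z)$ has the universal property of the pushout. Checking this needs something like the map $(f,-g)$ and a group structure on mapping spaces, none of which is available at this stage.
\item Your alternative via ``finite limits plus $\Sigma,\Omega$ inverse equivalences'' presupposes finite limits. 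That is the dual of what you are proving, so it only moves the problem.
\end{itemize}

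A repair that avoids products entirely goes as follows.
\begin{enumerate}
\item Given a span $y\xleftarrow{f}x\xrightarrow{g}z$, let $k\to x$ be a fiber of $g$.
\item Let $Q$ be a cofiber of the composite $k\to x\xrightarrow{f}y$.
\item By (S3), the fiber square of $g$ is also a pushout, so $z\simeq\text{Cof}(k\to x)$.
\item The universal property of that pushout yields a diagram $\Delta^2\times\Delta^1\to\EuScript C$ with rows $k\to x\to y$ and $0\to z\to Q$. Its left square and its outer rectangle are pushouts.
\item By the pasting lemma \cite[Prop.~4.4.2.1]{lurie-htt}, the right square is a pushout, i.e.\ $Q\simeq y\amalg_x z$.
\end{enumerate}
Together with the zero object this gives all finite colimits. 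Dually, $y\times_x z\simeq\text{Fib}(y\xrightarrow{f}x\to\text{Cof}(z\xrightarrow{g}x))$ gives all finite limits. The case $x=0$ recovers $y\amalg z\simeq\text{Cof}(\Omega z\xrightarrow{0}y)$ and $y\times z\simeq\text{Fib}(y\xrightarrow{0}\Sigma z)$. These are the correct versions of the (co)product constructions you were reaching for.
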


A functor $f : \EuScript C \rightarrow \EuScript D$ between stable $\infty$-categories is \textit{exact} if it preserves zero objects and fiber/cofiber sequences. Equivalently, a functor is exact if it is either left exact (preserves finite limits) or right exact (preserves finite colimits), see \cite[Prop. 1.1.4.1]{lurie-ha}. We define the \textit{$\infty$-category of stable $\infty$-categories} to be the simplicial subset $\EuScript{S}t \subset \EuScript {C}at_\infty$ formed by simplices whose vertices correspond to stable $\infty$-categories and edges to exact functors.

Since pushout and pullback squares coincide in stable $\infty$-categories, we have the following ``combined" version of the pasting lemma:
\begin{lem}{\cite[Prop. 4.4.2.1]{lurie-htt}}\label{lem:5}
    Let $\EuScript C$ be a stable $\infty$-category and suppose we are given a diagram $\Delta^2 \times \Delta^1 \rightarrow \EuScript C$ of the form:
    \[
    \begin{tikzcd}
        x \ar[r] \ar[d] & y \ar[r]\ar[d] & z \ar[d]\\
        x\ar[r] & y' \ar[r] & z'
    \end{tikzcd}
    \]
    \begin{enumerate}[label=(\alph*)]
        \item If the left square is biCartesian, then the right square is biCartesian if and only if the outer square is biCartesian.
        \item If the right square is biCartesian, then the left square is biCartesian if and only if the outer square is biCartesian. \pushQED{\qed} \qedhere \popQED
    \end{enumerate}
\end{lem}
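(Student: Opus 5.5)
The argument reduces both parts to the ordinary pasting lemma for pushouts, \cite[Prop. 4.4.2.1]{lurie-htt}, using the fact that in a stable $\infty$-category a square is a pushout if and only if it is a pullback. I would work entirely with coCartesian squares and only convert back to biCartesian language at the end.

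\textbf{Part (a).} Assume the left square is biCartesian, so in particular it is a pushout square. The pasting lemma for pushouts then states that the right square is a pushout if and only if the outer rectangle is a pushout. By stability, a square is a pushout if and only if it is biCartesian, so this is exactly statement (a).

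\textbf{Part (b).} Here the horizontal dual of the pasting lemma is the relevant tool. Assume the right square is biCartesian, so in particular it is a pullback square. The dual form of \cite[Prop. 4.4.2.1]{lurie-htt}, obtained by passing to $\Ce^{op}$, states that if the right square is a pullback, then the left square is a pullback if and only if the outer square is a pullback. Translating pullback into biCartesian via stability again gives statement (b).

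\textbf{Orientation check.} Lurie's statement concerns a diagram $\Delta^2\times\Delta^1\to\Ce$ in which the left square is a pushout. The pullback version requires the right square to be Cartesian, and one obtains it by applying the statement in $\Ce^{op}$ with the diagram reversed along $\Delta^2$. The main obstacle, which is only bookkeeping, is confirming that this reversal swaps ``left'' and ``right'' correctly, so that the pushout hypothesis on the left square becomes the pullback hypothesis on the right square. No other difficulty is expected.

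Note that the label $x$ appearing in both rows is harmless. It only indicates that the first vertical map may be arbitrary, for instance an identity, and the pasting argument does not use this.
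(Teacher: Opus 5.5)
Your proposal is correct and matches the paper, which gives no separate proof and obtains the lemma directly from \cite[Prop. 4.4.2.1]{lurie-htt} together with the coincidence of pushout and pullback squares in a stable $\infty$-category. You do the same: pushout pasting for (a), and for (b) the version in $\EuScript C^{\mathrm{op}}$, where reversing $\Delta^2$ correctly swaps the left and right squares.
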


\subsection{Locally (co)Cartesian fibrations and the Grothendieck construction}\label{groth-cons}

We introduce the concept of (co)Cartesian fibrations and locally (co)Cartesian fibrations and briefly discuss the $\infty$-categorical Grothendieck construction (see \cite[§2.4]{lurie-htt}) which establishes an equivalence between Cartesian fibrations and functors into $\Ce at_\infty$.

\paragraph{Locally Cartesian and coCartesian fibrations.}

\begin{defn}\label{defn:1}
    Let $p : X \rightarrow S$ be an inner fibration of simplicial sets.
    \begin{enumerate}[label=(\alph*)]
        \item An edge $e : x \rightarrow y$ of $X$ is called \textit{$p$-Cartesian} if the induced map
        \[
        X_{/e} \longrightarrow X_{/y} \times_{S_{/p(y)}} S_{/p(f)}
        \]
        is a trivial Kan fibration.
        \item Dually, An edge $e : x \rightarrow y$ of $X$ is called \textit{$p$-coCartesian} if the induced map
        \[
        X_{e/} \longrightarrow X_{x/} \times_{S_{p(x)/}} S_{p(f)/}
        \]
        is a trivial Kan fibration.
    \end{enumerate}
    The inner fibration $p$ is called a \textit{Cartesian fibration} if for every edge $e : x \rightarrow y$ of $S$ and every $\widetilde{y} \in X$ with $p(\widetilde{y}) = y$, there exists a $p$-Cartesian edge $\widetilde{e} : \widetilde{x} \rightarrow \widetilde{y}$ such that $p(\widetilde{e}) = e$. Dually, it is called a \textit{coCartesian fibration} if for every edge $e : x \rightarrow y$ of $S$ and every $\widetilde{x} \in X$ with $p(\widetilde{x}) = x$, there exists a $p$-coCartesian edge $\widetilde{e} : \widetilde{x} \rightarrow \widetilde{y}$ such that $p(\widetilde{e}) = e$.
\end{defn}

\begin{defn}\label{defn:2}
    Let $p : X \rightarrow S$ be an inner fibration of simplicial sets. Then $p$ is called \textit{locally Cartesian} (resp. \textit{locally coCartesian}) if for every edge $e : \Delta^1 \rightarrow S$, the pullback
    \[
    p' : X \times_S \Delta^1 \longrightarrow \Delta^1
    \]
    is a Cartesian (resp. coCartesian) fibration.
\end{defn}

\begin{rem}
For our purposes, $p : \EuScript X \rightarrow \EuScript S$ will be a functor of $\infty$-categories and $\EuScript S$ will be the nerve of a category. In particular, $p$ is automatically an inner fibration (\cite[Prop. 2.3.1.5]{lurie-htt}).
\end{rem}

\begin{ex}\label{ex:1} Let $\EuScript X$ be an $\infty$-category.
\begin{enumerate}[label=(\arabic*)]
\item The projection $p : X \rightarrow \Delta^0$ is both Cartesian and coCartesian. Namely, for each object $x \in \EuScript X$ the identity on $x$ is both $p$-Cartesian and $p$-coCartesian. More generally, any equivalence of $\EuScript X$ is both $p$-Cartesian and $p$-coCartesian.
\item A functor $p : X \rightarrow \Delta^1$ is locally Cartesian (resp. locally coCartesian) if and only if it is Cartesian (resp. coCartesian). This follows from the fact that for the unique nondegenerate edge $\text{id}_{\Delta^1}$ of $\Delta^1$, the pullback map
\[
p' : X \times_{\Delta^1} \Delta^1 \longrightarrow \Delta^1
\]
is isomorphic as a map of simplicial sets to $p$.
\end{enumerate}
\end{ex}

\paragraph{The Grothendieck construction and adjunctions of $\infty$-categories.}

Let $\mathcal C$ be an ordinary category and $q : \mathcal C \rightarrow \text{Set}_\Delta$ a functor such that for every $c \in \mathcal C$, $q(c)$ is an $\infty$-category.

\begin{defn}{\cite[Def. 1.1.9]{dyck_spherical}}
	The \textit{covariant Grothendieck construction} of $q$ is the simplicial set $\Gamma(q)$ defined as follows. An $n$-simplex of $\Gamma(q)$ consists of
	\begin{enumerate}
		\item a functor $\sigma : [n] \rightarrow \mathcal C$, 						corresponding to an $n$-simplex of $\text{N}(\mathcal C)$,
		\item for every $I \subset [n]$, a map $\Delta^I \rightarrow 				q(\sigma(\max(I)))$,
	\end{enumerate}
	such that for every $J \subset I \subset [n]$, the diagram
	\[
	\begin{tikzcd}
		\Delta^J \ar[r] \ar[d] & {q(\sigma(\max(J)))} \ar[d]\\
		\Delta^I \ar[r] & {q(\sigma(\max(I)))}
	\end{tikzcd}
	\]
	commutes. There is a natural forgetful $\infty$-functor $\pi : \Gamma(q) \rightarrow \text{N}(\mathcal C)$.
	Dually, the \textit{contravariant Grothendieck construction} of $q$ is the simplicial set $\chi(q)$ defined as follows. An $n$-simplex of $\chi(q)$ consists of
	\begin{enumerate}
		\item a functor $\sigma : [n] \rightarrow \mathcal C^\text{op}$, 			corresponding to an $n$-simplex of $\text{N}(\mathcal C)^\text{op}			$,
		\item for every $I \subset [n]$, a map $\Delta^I \rightarrow 				q(\sigma(\min(I)))$,
	\end{enumerate}
	such that for every $J \subset I \subset [n]$, the diagram
	\[
	\begin{tikzcd}
		\Delta^J \ar[r] \ar[d] & {q(\sigma(\min(J)))} \ar[d]\\
		\Delta^I \ar[r] & {q(\sigma(\min(I)))}
	\end{tikzcd}
	\]
	commutes. There is a natural forgetful $\infty$-functor $\pi' : \chi(q) \rightarrow \text{N}(\mathcal C)^\text{op}$.
\end{defn}
There is a version of this construction taking as its input the more general datum of an $\infty$-functor $\text{N}(\mathcal C) \rightarrow \Ce at_\infty$, see \cite[§3.2]{lurie-htt}. Due to its technicality, we do not give a description here.

We assume the following without proof.

\begin{prop}{\cite[Prop. 3.2.5.21]{lurie-htt}}
\begin{enumerate}[label=(\alph*)]
\item The map $\pi : \Gamma(q) \rightarrow \text{N}(\mathcal C)$ is a coCartesian fibration and the map $\pi' : \chi(q) \rightarrow \text{N}(\mathcal C)^\text{op}$ is a Cartesian fibration.
\item Let $e$ be an edge of $\Gamma(q)$ covering a morphism $c \rightarrow c'$ of $\mathcal C$. Then $e$ is $\pi$-coCartesian if and only if the corresponding edge of $q(c')$ is an equivalence.
\item Let $e$ be an edge of $\chi(q)$ covering a morphism $c \rightarrow c'$ of $\mathcal C^\text{op}$. Then $e$ is $\pi'$-Cartesian if and only if the corresponding edge of $q(c)$ is an equivalence. \pushQED{\qed} \qedhere \popQED
\end{enumerate}
\end{prop}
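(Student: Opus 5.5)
The plan is to reduce everything to horn-filling problems inside the individual $\infty$-categories $q(c)$. I would use the explicit description of simplices of $\Gamma(q)$ and prove (a) and (b) together, then deduce (c) by duality. First I unwind the definition. A map $K\to\Gamma(q)$ over a fixed $\sigma:[n]\to\mathcal C$ is the same as a compatible family of maps. Concretely, for $\Delta^n$ one can assemble the data as follows: for each $k$, the pieces $\Delta^I$ with $\max(I)=k$ map to $q(\sigma(k))$, and these are glued along the transition functors $q(\sigma(j)\to\sigma(k))$. Filling a simplex over $\sigma$ then amounts to choosing the components $\Delta^I\to q(\sigma(\max I))$ for the missing faces $I$, in order of increasing $\max I$. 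At each stage this is an extension problem in the single $\infty$-category $q(\sigma(k))$. The boundary data there are obtained by pushing earlier pieces forward along the functors $q(\sigma(j)\to\sigma(k))$.

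\emph{Inner fibration.} Given an inner horn $\Lambda^n_i\to\Gamma(q)$ over an $n$-simplex of $\text{N}(\mathcal C)$ (which always fills, since $\mathcal C$ is a category), the only missing pieces are $I=[n]$ and $I=d_i[n]$. Both have $\max I = n$ when $i<n$, so everything lands in $q(\sigma(n))$. The problem becomes extending a map $\Lambda^n_i\to q(\sigma(n))$, where all lower data have been transported into $q(\sigma(n))$. This is possible because $q(\sigma(n))$ is an $\infty$-category.

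\emph{coCartesian lifts.} For $f:c\to c'$ and $x\in q(c)$, I take the edge $e$ with $\Delta^{\{0\}}\mapsto x$, $\Delta^{\{1\}}\mapsto q(f)(x)$, and $\Delta^{\{0,1\}}\mapsto \text{id}_{q(f)(x)}$. More generally, I consider any edge whose component in $q(c')$ is an equivalence. To show such $e$ is $\pi$-coCartesian, I verify the right lifting property against $\Lambda^n_0\subset\Delta^n$ ($n\geq 2$) with first edge $e$, relative to $\pi$. As before, this reduces to extending a map $\Lambda^n_0\to q(\sigma(n))$ whose edge $0\to1$ is the image of the component of $e$ under $q(\sigma(1)\to\sigma(n))$. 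Since functors preserve equivalences, this edge is an equivalence. Joyal's special horn lifting theorem (left horns with an equivalence as first edge fill in an $\infty$-category) then finishes the argument. This gives the ``if'' direction of (b) and shows that $\pi$ is coCartesian.

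\emph{Converse and duality.} For the ``only if'' in (b), I use that coCartesian lifts of $f$ with given source are unique up to equivalence in the fiber $\pi^{-1}(c')\cong q(c')$. Any coCartesian $e'$ therefore factors as $e$ followed by an equivalence in $q(c')$. Its component in $q(c')$ is then a composite of $\text{id}$ with an equivalence, hence an equivalence. Part (c) follows by applying (a) and (b) to $q$ composed with $\text{op}$, since $\chi(q)\cong\Gamma(q^{\text{op}})^{\text{op}}$ over $\text{N}(\mathcal C)^{\text{op}}$.

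The main obstacle is the bookkeeping in the first step. One has to check carefully that the partially given data over a horn really assemble into a single map $\Lambda^n_i\to q(\sigma(n))$, with the compatibility squares matching, and that filling it determines all missing components consistently. Outer faces with $\max I<n$ are already given, so only faces containing $n$ are new, and the combinatorics should work out. If this gets messy, I would fall back on identifying $\Gamma(q)$ with Lurie's relative nerve and citing the mapping-simplex argument of \cite[§3.2.5]{lurie-htt}.
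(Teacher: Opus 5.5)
Your proof is correct, but it takes a different route from the paper. The paper gives no argument of its own: it cites \cite[Prop. 3.2.5.21]{lurie-htt}, where the statement is extracted from Lurie's marked-simplicial treatment of relative nerves, namely fibrancy of the marked relative nerve of $c\mapsto q(c)^\natural$ in the coCartesian model structure over $\text{N}(\mathcal C)$. You verify the lifting conditions by hand, and the bookkeeping you worry about does close up.

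It is cleanest after observing that an $n$-simplex of $\Gamma(q)$ over $\sigma$ is the same as a sequence $\tau_k:\Delta^k\to q(\sigma(k))$, $0\le k\le n$, with $\tau_k|_{\Delta^{k-1}}=q(\sigma(k-1\to k))\circ\tau_{k-1}$. All other components are restrictions $\tau_I=\tau_{\max I}|_{\Delta^I}$. For $\Lambda^n_i$ with $i<n$, the face $d_n$ is present, so every $\tau_k$ with $k<n$ is given. The missing datum is then exactly an extension to $\Delta^n$ of a map $\Lambda^n_i\to q(\sigma(n))$, which is well defined by functoriality of $q\circ\sigma$. For $i=0$, its edge $\{0,1\}$ is $q(\sigma(1\to n))$ applied to the component of $e$.

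Two points should be made explicit. First, the paper defines coCartesian edges via a slice map being a trivial Kan fibration, so you need the equivalent $\Lambda^n_0$-lifting criterion, dual to the horn criterion in \cite[\S 2.4.1]{lurie-htt}. Second, in the converse, the filler over $c\xrightarrow{f}c'\xrightarrow{\text{id}}c'$ is by definition a single $2$-simplex of $q(c')$. Its edges are $\text{id}_{q(f)(x)}$, $g$, $\tau'$ on $\{0,1\},\{1,2\},\{0,2\}$, where $g$ is the equivalence in the fiber and $\tau'$ is the component of $e'$. This is what makes ``composite of $\text{id}$ with an equivalence'' precise.

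Your route is elementary and self-contained, needing only Joyal's lifting theorem and standard facts about coCartesian edges. Lurie's framework buys more: it shows that $q$ is actually the functor classifying $\pi$ under straightening. The paper relies on this immediately afterwards, and your argument does not address it.
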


Conversely, given a coCartesian fibration (resp. Cartesian fibration) $p : \Xe \rightarrow \text{N}(\mathcal C)$, there exists a \textit{classifying $\infty$-functor} $Q : \text{N}(\mathcal C) \rightarrow \Ce at_\infty$ (resp. $Q : \text{N}(\mathcal C)^\text{op} \rightarrow \Ce at_\infty$), i.e. a functor $Q$ such that the covariant (resp. contravariant) Grothendieck construction of $Q$ reproduces $p$. Moreover, $Q$ is unique up to a contractible space of choices and for $c \in \mathcal C$, we have $Q(c) = p^{-1}(c)$ (which is an $\infty$-category since $p$ is an inner fibration).

\begin{ex}\label{ex:2}
Suppose that $\mathcal C = [1]$ and let $p : \Xe \rightarrow \Delta^1$ be a coCartesian fibration. In this case, the datum of an $\infty$-functor $\Delta^1 \rightarrow \Ce at_\infty$ corresponds exactly to the datum of an ordinary functor $q : [1] \rightarrow \text{Set}_\Delta$ such that $q(0),q(1)$ are $\infty$-categories. Namely, they both amount to a single $\infty$-functor $F : \Ae \rightarrow \Be$. We thus have the following:
\begin{enumerate}[label=(\alph*)]
	\item There exists an $\infty$-functor $F : p^{-1}(0) \rightarrow p^{-1}(1)$ and an isomorphism $\Gamma(F) \cong \Xe$ such that $\pi : \Gamma(F) \rightarrow \Delta^1$ is identified with $p$. Furthermore, $F$ is unique up to contractible choice.
	\item Let $a \to b$ be an edge of $\Xe$ with $a \in p^{-1}(0), b \in p^{-1}(1)$, which by the isomorphism $\Gamma(F) \cong \Xe$, corresponds uniquely to a morphism $F(a) \to b$. Then $a \to b$ is $p$-coCartesian if and only if $F(a) \to b$ is an equivalence. In particular, there exists a $p$-coCartesian edge $a \to F(a)$.
\end{enumerate} 
\end{ex}

\begin{rem}\label{rem:straightening-locally}
    Another variant of the Grothendieck construction, known as straightening for locally coCartesian fibrations (see \cite[Thm. 3.8.1]{lurie-goodwillie}), asserts a correspondence between locally coCartesian $\chi \to \Ce$ (resp. Cartesian) fibrations and oplax functors $\Ce \to \Ce at_\infty$ of $(\infty,2)$-categories. 
\end{rem}

Using the Grothendieck construction, we can define the notion of an \textit{adjunction} of $\infty$-categories.
\begin{defn}
	\begin{enumerate}[label=(\alph*)]
		\item An \textit{adjunction} of $\infty$-categories is a map
		\[
		p : \Xe \longrightarrow \Delta^1
		\]
		that is a \textit{biCartesian fibration}, i.e. both a Cartesian and a coCartesian fibration.
		\item Let $F : \Ae \rightarrow \Be$ be an $\infty$-functor. We say that
		\begin{enumerate}[label=(\roman*)]
			\item $F$ \textit{admits a right adjoint} if the covariant Grothendieck construction $\pi: \Gamma(F) \rightarrow \Delta^1$ is also a Cartesian fibration. If $G : \Be \to \Ae$ classifies the Cartesian fibration $\pi$, then we say that $G$ \textit{is left adjoint to} $F$ and $F$ \textit{is right adjoint to} $G$.
			 \item $F$ \textit{admits a left adjoint} if the contravariant 	Grothendieck construction $\pi' : \chi(F) \rightarrow \Delta^1$ is also a coCartesian fibration. If $G : \Be \to \Ae$ classifies the coCartesian fibration $\pi'$, then we say that $G$ \textit{is right adjoint to} $F$ and $F$ \textit{is left adjoint to} $G$.
		\end{enumerate}
	\end{enumerate}
\end{defn}

As in ordinary category theory, adjoints satisfy exactness conditions.

\begin{prop}{\cite[Prop. 5.2.3.5]{lurie-htt}}
	Let $F : \Ae \to \Be$ be an $\infty$-functor. If $F$ has a right adjoint, then it preserves all colimits which exist in $\Ae$. If $F$ has a left adjoint, then it preserves all limits which exist in $\Ae$.
\end{prop}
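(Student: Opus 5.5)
The plan is to reduce both claims to statements about mapping spaces, and to use the Grothendieck construction only to produce a unit or counit of the adjunction. I treat the first claim; the second then follows by passing to opposite categories. Passing to opposites swaps left and right adjoints, and turns limits in $\Ae$ into colimits in $\Ae^{\text{op}}$.

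Suppose $F : \Ae \to \Be$ admits a right adjoint, so $\pi : \Gamma(F) \to \Delta^1$ is biCartesian, and let $G : \Be \to \Ae$ be the functor classifying the Cartesian fibration $\pi$.

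\textbf{Step 1: the mapping-space equivalence.} For $a \in \Ae = \pi^{-1}(0)$ and $b \in \Be = \pi^{-1}(1)$, I compute $\text{Map}_{\Gamma(F)}(a,b)$ in two ways.
\begin{itemize}
\item Using the $\pi$-coCartesian edge $a \to F(a)$ of Example \ref{ex:2}, composition induces an equivalence $\text{Map}_{\Gamma(F)}(a,b) \simeq \text{Map}_\Be(F(a),b)$. This is exactly the defining property in Definition \ref{defn:1}(b): the relevant map is a trivial Kan fibration, so its fibers are contractible.
\item Dually, a $\pi$-Cartesian edge $G(b) \to b$ yields $\text{Map}_{\Gamma(F)}(a,b) \simeq \text{Map}_\Ae(a,G(b))$.
\end{itemize}
Both equivalences are natural in $a$, since they are given by composition with a fixed edge. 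Combining them gives
\[
\text{Map}_\Be(F(a),b) \simeq \text{Map}_\Ae(a,G(b)),
\]
natural in $a$ and well-defined in $\text{hKan}$.

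\textbf{Step 2: reduce colimit preservation to mapping spaces.} Let $\bar p : K^\rhd \to \Ae$ be a colimit diagram with cone point $a_\infty$. I need $F\circ \bar p$ to be a colimit diagram in $\Be$. It suffices to check that for every $b \in \Be$ the induced map
\[
\text{Map}_\Be(F(a_\infty),b) \to \lim_{k\in K^{\text{op}}} \text{Map}_\Be(F(\bar p(k)),b)
\]
is an equivalence of Kan complexes. This is the mapping-space criterion for colimits, \cite[Prop. 4.4.2.... / 5.1.3]{lurie-htt}, i.e. corepresentable functors jointly detect colimits. Applying Step 1 pointwise, this map is identified with
\[
\text{Map}_\Ae(a_\infty,G(b)) \to \lim_{k} \text{Map}_\Ae(\bar p(k),G(b)),
\]
which is an equivalence because $\bar p$ is a colimit diagram.

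\textbf{Main obstacle.} The difficulty is making the naturality in Step 1 coherent, not merely objectwise or in the homotopy category. The limit in Step 2 is taken over a diagram of spaces indexed by $K^{\text{op}}$, so a homotopy-commutative identification is not enough; it must hold as natural transformations of functors $\Ae^{\text{op}} \to \text{Kan}$.

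To get this coherence, I would choose the coCartesian edges functorially. Since $\pi$ is coCartesian, the restriction map from the full subcategory of $\text{Fun}_{\Delta^1}(\Delta^1\times\Ae,\Gamma(F))$ on coCartesian edges is a trivial Kan fibration onto $\text{Fun}(\Ae,\Ae)$. This is an instance of Lemma \ref{lem:relative}: $\pi$-left Kan extension along $\Ae\times\{0\}\subset\Ae\times\Delta^1$, where the $\pi$-colimits are exactly the coCartesian edges. Taking a section gives a natural transformation $\text{id}\to F$ in $\Gamma(F)$, and composing along it is natural as a map of presheaves. The Cartesian side needs only objectwise choices, because $b$ is fixed throughout Step 2.

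Alternatively, one could bypass $G$ entirely. The functor $\text{Map}_{\Gamma(F)}(-,b)$ restricted to $\Ae$ is representable by $G(b)$, as the Cartesian edge shows, and this representability is all the argument in Step 2 uses.
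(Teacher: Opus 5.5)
Your argument is correct, but it takes a different route from the one behind the paper's statement. The paper gives no proof and simply imports \cite[Prop.~5.2.3.5]{lurie-htt}, whose argument stays inside the correspondence $\pi:\Gamma(F)\to\Delta^1$ and is phrased with relative colimits rather than mapping spaces. Roughly, it runs as follows. Because $\pi$ is a Cartesian fibration, a colimit diagram $\bar p$ in the fiber $\Ae$ is automatically a $\pi$-colimit in $\Gamma(F)$. Transporting along a natural transformation $\bar p\Rightarrow F\circ\bar p$ with $\pi$-coCartesian components preserves $\pi$-colimits. Finally, a $\pi$-colimit lying in the fiber $\Be$ is a colimit there.

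Your proof is the $\infty$-categorical transcription of the classical Yoneda argument. It has to pay explicitly for two things that the relative-colimit formalism of \cite[\S 4.3.1]{lurie-htt} packages. The first is the criterion that the functors $\text{Map}_\Be(-,b)$ jointly detect colimits; cite it properly, e.g.\ via full faithfulness and limit preservation of the Yoneda embedding of $\Be^{\text{op}}$, \cite[\S 5.1.3]{lurie-htt}, instead of the placeholder. The second is a \emph{coherent} equivalence $\text{Map}_\Be(F(-),b)\simeq\text{Map}_\Ae(-,G(b))$ of functors $\Ae^{\text{op}}\to\text{Kan}$. You correctly isolate the latter as the crux.

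Note that your remark in Step~1, that both equivalences are ``composition with a fixed edge'', is true only for the Cartesian one. It is the functorial choice of coCartesian edges in your last paragraph that actually makes Step~1 natural in $a$. That choice via Lemma~\ref{lem:relative} is fine, and here it is even available on the nose. The Grothendieck construction of the strict functor $F$ receives a canonical map $\Delta^1\times\Ae\to\Gamma(F)$: send $(\sigma,\alpha)$ to $\sigma$ together with $\alpha|_{\Delta^I}$ or $F\circ\alpha|_{\Delta^I}$, according as $\sigma(\max I)$ is $0$ or $1$. This map is the inclusion on $\{0\}\times\Ae$, equals $F$ on $\{1\}\times\Ae$, and has $\pi$-coCartesian edges $a\to F(a)$.

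Your route buys an explicit adjunction equivalence and a proof recognisable from ordinary category theory; Lurie's buys brevity once relative colimits are set up. As you observe at the end, both use only that $\pi$ is Cartesian (representability of $\text{Map}_{\Gamma(F)}(-,b)|_{\Ae}$), never the functor $G$ itself.
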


In the case of stable $\infty$-categories, we obtain a stronger statement.

\begin{cor}\label{cor:adjunction}
Let $F : \Ae \to \Be$ be an $\infty$-functor between stable $\infty$-categories. If $F$ has either a right or a left adjoint, then it is exact.
\end{cor}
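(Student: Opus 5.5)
We deduce the statement from the preceding proposition (\cite[Prop. 5.2.3.5]{lurie-htt}) together with the characterization of exact functors recalled in §\ref{subsec:stable}: a functor between stable $\infty$-categories is exact if and only if it is right exact (preserves finite colimits), if and only if it is left exact (preserves finite limits), see \cite[Prop. 1.1.4.1]{lurie-ha}. The argument splits into the two cases of the hypothesis.

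First, suppose $F$ has a right adjoint. By the preceding proposition, $F$ preserves every colimit that exists in $\Ae$. Since $\Ae$ is stable, it has all finite colimits by \cite[Prop. 1.1.3.4]{lurie-ha}, so $F$ preserves all finite colimits, i.e. it is right exact. In particular, $F$ preserves the initial object, hence zero objects. It also preserves pushout squares, in particular cofiber sequences. Because fiber and cofiber sequences coincide in both $\Ae$ and $\Be$, $F$ preserves fiber sequences as well, and is therefore exact. Equivalently, one can cite directly that right exact implies exact.

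Second, suppose $F$ has a left adjoint. The argument is dual: the same proposition shows that $F$ preserves all limits existing in $\Ae$, in particular all finite limits. Thus $F$ is left exact, preserving the terminal (hence zero) object and pullback squares, and so it is exact by the same reasoning.

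The only point needing care is that the empty (co)limit is included. Preservation of zero objects then comes from preservation of the initial (respectively terminal) object, which is a zero object in a pointed $\infty$-category; everything else is formal. I expect no real obstacle here. The mild subtlety is making sure the convention for which functor is called left or right adjoint in the Grothendieck-construction definition matches the one in \cite[Prop. 5.2.3.5]{lurie-htt}. Since the corollary treats both cases symmetrically, any mismatch of convention does not affect the conclusion.
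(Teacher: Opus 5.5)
Your proposal is correct and follows the same route as the paper. A right (resp. left) adjoint gives preservation of finite colimits (resp. limits), hence right (resp. left) exactness, and between stable $\infty$-categories this is equivalent to exactness.

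Your caution about the left/right convention is warranted: the paper's Grothendieck-construction definition of which functor is called the left adjoint is stated backwards. Since the corollary treats both cases symmetrically, this does not affect the argument.
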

\begin{proof}
If $F$ has a right adjoint, then it preserves all colimits existing in $\Ae$. In particular, it preserves all finite colimits and is thus right exact. However, between stable $\infty$-categories this is equivalent to $F$ being exact (see §\ref{subsec:stable}.\ref{par:stab}). If $F$ has a left adjoint, then it is left exact and therefore exact.
\end{proof}

\paragraph{Oplax limits.}\label{par:oplax} Let $p : \chi \to \Ce$ be a locally coCartesian fibration where $\Ce$ is an $\infty$-category. By virtue of Lurie's Straightening equivalence, this corresponds to an oplax diagram in the $(\infty,2)$-category of $\infty$-categories, see Remark \ref{rem:straightening-locally}. Naturally associated to this datum are four universal constructions: the \textit{oplax}/\textit{lax} \textit{limit}/\textit{colimit}. In the following, we explicitly describe the oplax limit associated to a locally coCartesian fibration $p : \chi \to \Delta^n$, circumventing the Straightening equivalence.

\begin{notat}
    Fix a poset $P$ and set $\Delta' := \Delta \cup \{\varnothing\}$. We define the category of \textit{subdivisions of $P$} to be the full subcategory \[\text{sd}(P) \subset \Delta'_{/P} := \Delta' \times_{\text{Cat}} \{P\}\] consisting of injective functors. Notice that this again defines a poset.
\end{notat}

\begin{ex}\label{ex:sdcube}
    The subdivisions of $[n]$ form an $(n+1)$-dimensional cube: We have a map
    \[
    \varphi : \text{sd}([n]) \longrightarrow [1]^{n+1}, \quad (f : [i] \to [n]) \longmapsto \varphi(f),
    \]
    where $\varphi(f) = (\varphi(f)_0,\dots,\varphi(f)_n)$ is given by
    \[
    \varphi(f)_k = \begin{cases}
        1 &\text{if }k = f(j) \text{ for some }j \in [i],\\
        0 &\text{else.}
    \end{cases}
    \]
    It is an easy exercise to show that this is order-preserving and bijective.
Furthermore, note that subdivisions $f \in \text{sd}([n])$ correspond to strictly increasing sequences $n_0n_1\dots n_i$ of integers $n_k \in [n]$. In practice, we will use this as our notation.
\[\text{sd}([2]) = \begin{tikzcd}
	\varnothing && 1 \\
	& 2 && 12 \\
	0 && 01 \\
	& 02 && 012
	\arrow[from=1-1, to=1-3]
	\arrow[from=1-1, to=2-2]
	\arrow[from=1-1, to=3-1]
	\arrow[from=1-3, to=2-4]
	\arrow[from=1-3, to=3-3]
	\arrow[from=2-2, to=2-4]
	\arrow[from=2-2, to=4-2]
	\arrow[from=2-4, to=4-4]
	\arrow[from=3-1, to=3-3]
	\arrow[from=3-1, to=4-2]
	\arrow[from=3-3, to=4-4]
	\arrow[from=4-2, to=4-4]
\end{tikzcd}\]
\end{ex}

\begin{defn}\label{def:oplax-lim}
    Let $p : \chi \to \Delta^n$ be a locally coCartesian fibration and define a map
\begin{align*}
\text{sd}([n])\setminus \varnothing \longrightarrow \Delta^n, \quad n_0n_1\dots n_i \longmapsto n_i.
\end{align*} Then, the $\infty$-category
    \[
    \text{oplax-lim}(p) \subset \text{Fun}_{\Delta^n}(\text{sd}([n])\setminus \varnothing, \chi)
    \]
    is defined as the full subcategory consisting of functors $L$ such that for every $0 \leq i \leq n$ and a subdivision $n_0n_1\dots n_i$, the edge $L(n_0n_1\dots n_{i-1} \to n_0n_1\dots n_{i})$ is locally $p$-coCartesian. We call $\text{oplax-lim}(p)$ the \textit{oplax limit} of $p$.
\end{defn}

\begin{rem}
    Note that we have functors
    \[
    \varphi_i : \text{oplax-lim}(p) \longrightarrow p^{-1}(i)
    \]
    given by evaluation at the subdivision $i$. If for $i < j$, $f_{ij} : p^{-1}(i) \to p^{-1}(j)$ denotes the functor associated to the coCartesian fibration $p\vert_{p^{-1}(\Delta^{\{i,j\}})}$ via the Grothendieck construction, then there is a coherent system of natural transformations
    \[
    \varphi_j \Longrightarrow f_{ij} \circ \varphi_i.
    \]
    Since a formal statement or proof of this would be somewhat technical and is not important for our discussion, we omit the details here.
\end{rem}

\section{Semiorthogonal decompositions}

\subsection{Waldhausen diagrams and coherent complexes}\label{subsec:waldhausen} In this section, we give construct an $\infty$-categorical equivalence, which will be useful in the subsequent discussion of semiorthogonal decompositions.

\paragraph{The Waldhausen S-construction.} First, we briefly recall the Waldhausen S-construction of a stable $\infty$-category, as defined in \cite[§3.1]{dyck_spherical}. 

\begin{notat}
For $n \geq 0$, we denote by
\[
J(n) := \{(i,j) \mid 0 \leq i \leq j \leq n\}
\]
the poset with order $(i,j) \leq (k,l)$ iff $i \leq k$ and $j \leq l$. We consider $J(n)$ as a category. Note that $J(n) = \text{Mor}[n]$ is the category of morphisms of the poset $[n]$ considered as a category. The assignment $[n] \mapsto J(n)$ defines a covariant functor $J : \Delta \rightarrow \EuScript{C}at$.
\end{notat}

\begin{defn}{\cite[Def. 3.1.2]{dyck_spherical}}
    Let $\EuScript C$ be a stable $\infty$-category. We denote by \[S_n(\EuScript C) \subset \text{Fun}(J(n), \EuScript C)\] the full subcategory spanned by functors $X : J(n) \rightarrow \EuScript C$ satisfying the conditions:
    \begin{enumerate}[label=(\arabic*)]
        \item For each $0 \leq i \leq n$ we have $X(i,i) = 0$.
        \item For each $0 \leq i < k \leq j < l \leq n$ the square
        \[
        \begin{tikzcd}
            X(i,j) \arrow[r] \arrow[d] & X(i,l) \arrow[d]\\
            X(k,j) \arrow[r] & X(k,l)
        \end{tikzcd}
        \]
        is a biCartesian square in $\EuScript C$.
    \end{enumerate}
    The functor $X$ is called a \textit{Waldhausen diagram of length $n$} and we abbreviate $X_{ij} := X(i,j)$. When $n$ varies, the $\infty$-categories $S_n(\EuScript C)$ unite into a simplicial $\infty$-category $S_\bullet(\EuScript C)$ called the \textit{Waldhausen S-construction of $\EuScript C$}.
\end{defn}

\begin{ex}
    Note that $S_0(\Ce)$ is simply the full subcategory of $\Ce$ spanned by the zero objects. Moreover, $S_1(\Ce)$ consists of diagrams of the form
    \[
    \begin{tikzcd}
        0 \arrow[r] & X \arrow[d]\\
        & 0
    \end{tikzcd}
    \]
    and is thus equivalent to the stable $\infty$-category $\Ce$ itself. Similarly, $S_2(\Ce)$ is equivalent to the $\infty$-category of biCartesian squares
    \[
    \begin{tikzcd}
        X \arrow[r] \arrow[d] & Y \arrow[d]\\
        0 \arrow[r] & Z
    \end{tikzcd}
    \]
    i.e., cofiber/fiber sequences. In general, we omit the zero objects $X_{00}$ and $X_{11}$ so that objects of $S_n(\Ce)$ correspond to diagrams
    \[
    \begin{tikzcd}
    X_{01} \arrow[r] \arrow[d] &[10pt] X_{02} \arrow[r] \arrow[d] &[6pt] X_{03} \arrow[r] \arrow[d] &[-5pt] \cdots \arrow[r] \arrow[d] &[-12pt] X_{0,n} \arrow[d]\\
    0 \arrow[r] & X_{12} \arrow[r] \arrow[d] & X_{13} \arrow[r] \arrow[d] & \ddots \arrow[r] \arrow[d] & \vdots \arrow[d] \\[1pt]
    & 0 \arrow[r] & \ddots \arrow[r]\arrow[d] & X_{n-3,n-1} \arrow[r] \arrow[d] & X_{n-3,n} \arrow[d]\\[9pt]
    & & 0 \arrow[r] & X_{n-2,n-1} \arrow[r] \arrow[d] &X_{n-2,n} \arrow[d]\\[9pt]
    &&& 0 \arrow[r] & X_{n-1,n}
    \end{tikzcd}
    \]
    where all squares are biCartesian. Note that restriction to the first row, i.e., to the subposet
\[
[n-1] \simeq \{(0,1) < (0,2) < \dots < (0,n)\} \subset J(n)
\]
induces a projection functor $h : S_n(\EuScript C) \rightarrow \text{Fun}([n-1], \EuScript C)$. Similarly, restriction to the last column
\[
[n-1] \simeq \{(0,n) < (1,n) < \dots < (n-1,n)\} \subset J(n)
\]
induces a projection functor $v : S_n(\EuScript C) \rightarrow \text{Fun}([n-1], \EuScript C)$.

\end{ex}

The following result regarding these functors is well-known.

\begin{prop}{\cite[Rem. 4.1.2]{lurie-waldhausen}}\label{prop:1}
    The functors
    \begin{gather*}
    h : S_n(\EuScript C) \rightarrow \text{Fun}([n-1], \EuScript C)\\
    v: S_n(\EuScript C) \rightarrow \text{Fun}([n-1], \EuScript C)
    \end{gather*}
    are trivial Kan fibrations. In particular, $S_n(\EuScript C)$ is a stable $\infty$-category.
    \qed
\end{prop}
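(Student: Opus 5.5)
We prove that $h$ is a trivial Kan fibration by factoring it as a composite of restriction functors, each of which is a trivial Kan fibration by Lemma \ref{lem:2} or its dual. The argument for $v$ is symmetric: reflecting $J(n)$ via $(i,j)\mapsto(n-j,n-i)$ and passing to opposite categories swaps the first row and the last column. Stability of $S_n(\Ce)$ then follows, because $\text{Fun}([n-1],\Ce)$ is stable (functor categories into a stable $\infty$-category are stable, with limits and colimits computed pointwise) and stability is invariant under categorical equivalence.

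We use the chain of full subposets
\[
J_0 := \{(0,j)\mid 1\le j\le n\} \subset J_1 := J_0\cup\{(i,i)\mid 0\le i\le n\} \subset J(n).
\]
Let $\text{Fun}^1\subset \text{Fun}(J_1,\Ce)$ be the full subcategory of functors sending every diagonal object $(i,i)$ to a zero object. The restriction $S_n(\Ce)\to\text{Fun}(J_0,\Ce)=\text{Fun}([n-1],\Ce)$ factors through $\text{Fun}^1$, and we treat the two steps separately.

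\emph{First step.} Consider $\text{Fun}^1\to\text{Fun}(J_0,\Ce)$. For $i\ge1$, the object $(i,i)$ has no morphisms into it from $J_0$, and no morphism out of it to $J_0$ apart from $(0,0)\to(0,j)$ when $i=0$. So a functor on $J_1$ sending all diagonal objects to $0$ is exactly a left Kan extension of its restriction to $J_0\setminus\{(0,0)\}$ along the diagonal part: the empty colimit is the initial object, which is $0$ since $\Ce$ is pointed. Lemma \ref{lem:2} therefore makes this restriction a trivial Kan fibration. We take $J_0$ to exclude $(0,0)$, matching the paper's $[n-1]$.

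\emph{Second step.} We show that an $X\in\text{Fun}(J(n),\Ce)$ with $X|_{J_1}\in \text{Fun}^1$ lies in $S_n(\Ce)$ if and only if it is a left Kan extension of $X|_{J_1}$. Fix $(k,l)$ with $1\le k<l$. The poset $(J_1)_{/(k,l)}$ consists of $(0,j)$ for $j\le l$ and $(i,i)$ for $i\le k$. This category contains the cospan $(k,k)\leftarrow(0,k)\to(0,l)$, and one checks it is left cofinal in $(J_1)_{/(k,l)}$. The colimit over the cospan is the pushout $0\sqcup_{X_{0k}}X_{0l}$, so the Kan extension condition at $(k,l)$ says exactly that the square with corners $X_{0k}, X_{0l}, X_{kk}=0, X_{kl}$ is coCartesian.

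It remains to see that these particular squares being biCartesian implies all the squares in the definition are. Consider the square with corners $(i,j),(i,l),(k,j),(k,l)$ for $i<k\le j<l$. First, the square with corners $(0,k),(0,j),(k,k),(k,j)$ is biCartesian for the same reason as above. Pasting it (Lemma \ref{lem:5}) with the square with corners $(0,j),(0,l),(k,j),(k,l)$ gives an outer square over $(0,k),(0,l),(k,k),(k,l)$, which is biCartesian; hence the square with corners $(0,j),(0,l),(k,j),(k,l)$ is biCartesian. The analogous statement with $i$ in place of $0$ follows from the vertical pasting
\[
\bigl(0\to i\to k\bigr)\times\bigl(j\to l\bigr).
\]
Since every finite poset diagram has a colimit in $\Ce$, Lemma \ref{lem:2} makes restriction from $S_n(\Ce)$ to $\text{Fun}^1$ a trivial Kan fibration. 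Composing the two steps gives $h$.

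The main obstacle is the cofinality claim in the second step, which needs a careful check of the Kerodon criterion: for each object of $(J_1)_{/(k,l)}$ one must verify that the relevant slice of the cospan is weakly contractible. The other delicate point is the pasting bookkeeping showing that the special squares generate all required biCartesian squares. Both are finite combinatorial checks on $J(n)$.
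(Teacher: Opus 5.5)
The paper does not prove this statement; it only cites Lurie. Your Kan-extension argument would therefore serve as a self-contained replacement, in the style of the paper's proof of Proposition~\ref{prop:4}. Your second step is sound. For every $x\in(J_1)_{/(k,l)}$, the elements of the span $(k,k)\leftarrow(0,k)\to(0,l)$ that receive a map from $x$ form the whole span, or $\{(0,l)\}$, or $\{(k,k)\}$. So the span is cofinal for colimits. In the paper's convention this is \emph{right} cofinal, and it is a span, not a cospan. Your pasting argument also correctly recovers all the squares required in $S_n(\Ce)$; the case $k=j$ is trivial. So the point you flagged as the main obstacle is actually routine.

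The genuine error is in the first step, which you treated as obvious. For $i\ge1$ the object $(i,i)$ \emph{does} receive morphisms from $J_0$: $(0,j)\le(i,i)$ for $1\le j\le i$. These are exactly the vertical maps $X_{0i}\to X_{ii}$ of a Waldhausen diagram. Consequently:
\begin{itemize}
\item $(J_0)_{/(i,i)}=\{(0,1),\dots,(0,i)\}$ has final object $(0,i)$, so a left Kan extension along $J_0\subset J_1$ satisfies $X(i,i)\simeq X(0,i)$.
\item The left-Kan-extended functors with zero diagonal are therefore only those vanishing on $J_0$.
\item So Lemma~\ref{lem:2} does not show that $\text{Fun}^1\to\text{Fun}(J_0,\Ce)$ is a trivial Kan fibration.
\end{itemize}
A single right Kan extension does not work either: $(J_0)_{(0,0)/}=J_0$ has initial object $(0,1)$, which would force $X(0,0)\simeq X(0,1)$.

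The fix is to split this step in two. First add $(0,0)$ by a left Kan extension: its slice over $J_0$ is empty, so the value is initial, i.e.\ zero. Then add the $(i,i)$ with $i\ge1$ by a right Kan extension from $J_0\cup\{(0,0)\}$, using the dual of Lemma~\ref{lem:2}. These objects admit no morphisms into $J_0\cup\{(0,0)\}$, so the value is terminal, i.e.\ zero. With this correction, $h$ is a composite of three trivial Kan fibrations. Your symmetry argument for $v$ and the deduction of stability then go through.
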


\paragraph{The $\infty$-category of coherent complexes.} In the following, we recall a cubical model for coherent complexes of finite length in a stable $\infty$-category $\EuScript C$. A more general discussion of $n$-cubes in $\EuScript C$ can be found in \cite[Appendix A]{dyck_cubes}.

\begin{notat}
    Let $n \geq 0$. Consider the finite poset $I^n$, where $I:=[1]$. For $\sigma = (\sigma_1,\dots,\sigma_n) \in I^n$, we write
    \[
    \vert \sigma \vert = \sigma_1 + \dots + \sigma_n.
    \]
    For $0 \leq i \leq n$, we denote
    \[
    \sigma^{(i)} = (\underbrace{0,\dots,0}_{n-i},\underbrace{1,\dots,1}_{i}).
    \]
    A vertex $\sigma \in I^n$ is called \textit{degenerate} if it is not of the form $\sigma^{(i)}$ for some $i$.
\end{notat}

\begin{defn}
    Let $\Ce$ be a stable $\infty$-category and $n \geq 0$. We denote by
    \[
    \text{Ch}^n(\EuScript C) \subset \text{Fun}(I^n, \EuScript C)\]
    the full subcategory spanned by functors mapping degenerate vertices to zero objects.
     We call such a functor $C$ a \textit{coherent complex of length $n$} and $\text{Ch}^n(\EuScript C)$ the \textit{$\infty$-category of coherent complexes (of length $n$) in $\EuScript C$}.
\end{defn}

\begin{notat}
For $C \in \text{Ch}^n(\EuScript C)$, we abbreviate $C_\sigma := C(\sigma)$ and $C_i := C_{\sigma^{(i)}}$ and we will sometimes depict the complex $C$ as
\[
C_0 \rightarrow C_1 \rightarrow \dots \rightarrow C_n
\]
omitting the coherent system of null-homotopies. In analogy to chain complexes in an abelian category, we will refer to the maps $C_i \to C_{i+1}$ as the \textit{differentials} of $C$.
\end{notat}

\begin{defn}{\cite[Def. A.20]{dyck_cubes}} Let $\EuScript C$ be a stable $\infty$-category and $n \geq 0$.
\begin{enumerate}
    \item The \textit{total fiber} $\text{TotFib}(C)$ of a coherent complex $C \in \text{Ch}^n(\EuScript C)$ is the fiber of the canonical map
    \[
    C_0 \longrightarrow \lim_{|\sigma| >0} C_\sigma.
    \]
    
    \item Dually, the \textit{total cofiber} $\text{TotCof}(C)$ of a coherent complex $C \in \text{Ch}^n(\EuScript C)$ is the cofiber of the canonical map
    \[
    \underset{|\sigma| < n}{\text{colim }} C_\sigma \longrightarrow C_n.
    \]
\end{enumerate}
\end{defn}

In a stable $\infty$-category $\EuScript C$, the ordinary fiber and cofiber functors satisfy the identity $\text{Cof} \simeq \Sigma \circ\text{Fib}$. This has the following generalization to total fibers and cofibers.

\begin{prop}{\cite[Prop. A.31]{dyck_cubes}}\label{prop:totfib-totcof}
    Let $\EuScript C$ be a stable $\infty$-category and $C \in \text{Ch}^n(\Ce)$. Then there exists a canonical equivalence
    \[
    \text{TotCof}(C) \simeq (\Sigma^n \circ \text{TotFib})(C). \pushQED{\qed}\qedhere \popQED
    \]
\end{prop}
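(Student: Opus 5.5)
We argue by induction on $n$, realizing both $\text{TotFib}$ and $\text{TotCof}$ as iterated fibers and cofibers along the cube directions. For $n=0$ both sides equal $C_0$ and there is nothing to prove. For $n=1$ the complex is a single map $C_0 \to C_1$. Here $\text{TotFib}(C) = \text{Fib}(C_0 \to C_1)$ and $\text{TotCof}(C) = \text{Cof}(C_0\to C_1)$, so the claim is the classical identity $\text{Cof}\simeq \Sigma\circ\text{Fib}$. This identity comes from pasting two biCartesian squares via Lemma \ref{lem:5}.

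For the inductive step, regard $C\in\text{Ch}^n(\Ce)$ as a map of $(n-1)$-cubes $C' \to C''$ along the first coordinate. Here $C'$ is the restriction to $\sigma_1=0$ and $C''$ the restriction to $\sigma_1=1$.

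\textbf{Total fiber.} The total fiber of an $n$-cube is the fiber of the induced map on total fibers of the two faces, $\text{TotFib}(C)\simeq \text{Fib}\bigl(\text{TotFib}(C')\to\text{TotFib}(C'')\bigr)$. This is a formal limit-commutation statement: the punctured cube decomposes as a pushout of posets, and one applies the right-Kan-extension/cofinality lemma (Kerodon 02XR) together with the pasting lemma.

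\textbf{Total cofiber.} Dually, $\text{TotCof}(C)\simeq\text{Cof}\bigl(\text{TotCof}(C')\to\text{TotCof}(C'')\bigr)$.

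\textbf{Using the zeros.} Of course $C'$ and $C''$ are not themselves coherent complexes, so the inductive hypothesis does not apply to them directly. The right move is therefore to peel off the coordinate adapted to the zero pattern. The vertices with $\sigma_1=1$ are degenerate except $\sigma^{(n)}$. Hence $C''$ has all vertices zero except its terminal one, $C_n$. For such a cube $\text{TotFib}(C'')\simeq\Sigma^{-(n-1)}C_n$ and $\text{TotCof}(C'')\simeq C_n$.

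Similarly, $C'$ is a coherent complex of length $n-1$ (indices $\sigma^{(0)},\dots,\sigma^{(n-1)}$) padded with zeros. By induction, $\text{TotCof}(C')\simeq\Sigma^{n-1}\text{TotFib}(C')$.

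\textbf{Combining.} It remains to check that the two maps $\text{TotFib}(C')\to\text{TotFib}(C'')$ and $\text{TotCof}(C')\to\text{TotCof}(C'')$ correspond to each other under these identifications, up to the shift $\Sigma^{n-1}$. Granting this, we get
\[
\text{TotCof}(C)\simeq \text{Cof}\bigl(\Sigma^{n-1}\text{TotFib}(C')\to \Sigma^{n-1}\Sigma^{-(n-1)}C_n\bigr) \simeq \Sigma^{n-1}\,\text{Cof}\bigl(\text{TotFib}(C')\to\Sigma^{-(n-1)}C_n\bigr).
\]
Since $\Sigma$ is exact, this is $\Sigma^{n-1}\Sigma\,\text{Fib}(\dots)\simeq\Sigma^n\text{TotFib}(C)$.

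\textbf{Main obstacle.} The hard step is making the isomorphisms natural, so that the maps match. To handle this, I would construct a single functorial comparison rather than proving the identifications one at a time. Both $\text{TotFib}$ and $\text{TotCof}$ should be expressed as exact functors $\text{Ch}^n(\Ce)\to\Ce$ obtained by Kan extension along poset inclusions, using Lemma \ref{lem:2} with its trivial Kan fibration. Then the comparison $\text{TotCof}\Rightarrow\Sigma^n\text{TotFib}$ can be produced as a natural transformation: extend each cube to a $2n$-dimensional diagram of biCartesian squares, similar to a Waldhausen staircase. The equivalence is then checked pointwise by the inductive computation above. Naturality in $C$ is what makes the result canonical.
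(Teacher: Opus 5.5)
Your base case and the iterated fiber/cofiber decomposition are correct, and so are $\text{TotFib}(C'')\simeq\Sigma^{-(n-1)}C_n$ and $\text{TotCof}(C'')\simeq C_n$. (The paper gives no argument here; it only cites \cite[Prop. A.31]{dyck_cubes}, so I am judging your argument on its own.)

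\textbf{The gap.} The step you ``grant'' is the entire content of the inductive step. A cofiber depends on the map, not only on its source and target. Separate equivalences $\text{TotCof}(C')\simeq\Sigma^{n-1}\text{TotFib}(C')$ and $\text{TotCof}(C'')\simeq\Sigma^{n-1}\text{TotFib}(C'')$ therefore tell you nothing about $\text{Cof}(\text{TotCof}(C')\to\text{TotCof}(C''))$. Your remedy does not close this. You propose to build a transformation from an unspecified $2n$-dimensional staircase and then check it is an equivalence ``pointwise by the inductive computation above''. That computation is exactly the one that assumed the compatibility, so the check is circular. Nothing in your sketch relates the new transformation to the iterated-cofiber description. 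The underlying problem is that a pointwise inductive hypothesis (``for each $C$ some equivalence exists'') is too weak to propagate.

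\textbf{The missing idea.} Induct on the functorial statement: there is a natural equivalence $\Sigma^{m}\circ\text{TotFib}\simeq\text{TotCof}$ of functors $\text{Ch}^{m}(\Ce)\to\Ce$. More cleanly, work with functors $\text{Fun}(I^m,\Ce)\to\Ce$, since the zero pattern plays no role. Contrary to your remark, both faces are coherent complexes: $C'$ is $C_0\to\cdots\to C_{n-1}$ and $C''$ is $0\to\cdots\to 0\to C_n$. Hence $C\mapsto(C'\to C'')$ is a functor $\text{Ch}^n(\Ce)\to\text{Fun}(\Delta^1,\text{Ch}^{n-1}(\Ce))$. The induction then runs as follows:
\begin{enumerate}
\item Whisker the level-$(n-1)$ natural equivalence with this functor. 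This gives an equivalence in $\text{Fun}(\Delta^1,\Ce)$, natural in $C$, between $\Sigma^{n-1}$ applied to $\text{TotFib}(C')\to\text{TotFib}(C'')$ and the map $\text{TotCof}(C')\to\text{TotCof}(C'')$.
\item Apply $\text{Cof}$ and commute it past the exact functor $\Sigma^{n-1}$.
\item Use the natural equivalence $\text{Cof}\simeq\Sigma\circ\text{Fib}$ on $\text{Fun}(\Delta^1,\Ce)$, together with the natural iterated fiber/cofiber descriptions of $\text{TotFib}$ and $\text{TotCof}$.
\end{enumerate}
This yields a natural equivalence at level $n$, so the induction propagates. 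With this strengthening, the explicit computations of $\text{TotFib}(C'')$ and $\text{TotCof}(C'')$ become unnecessary.
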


\begin{defn}
    A complex $C \in \text{Ch}^n(\EuScript C)$ is called \textit{exact} if $\text{TotFib}(C)$ is a zero object. We denote by 
    \[\text{Ch}_{\text{ex}}^n(\EuScript C) \subset \text{Ch}^n(\EuScript C)\] the full subcategory spanned by exact complexes.
\end{defn}

\begin{rem}\label{rem:exact-limit}
    Note that the total fiber of a complex $C \in \text{Ch}^n(\Ce)$ is a zero object if and only if the canonical map
    \[
    C_0 \longrightarrow \lim_{|\sigma| >0} C_\sigma
    \]
    is an equivalence. This is equivalent to $C$ being a limit diagram, i.e., a Cartesian cube.
\end{rem}

A direct consequence of $\ref{prop:totfib-totcof}$ is the following.
\begin{cor}\label{prop:2}
    For a complex $C \in \text{Ch}^n(\EuScript C)$, the following are equivalent:
    \begin{enumerate}
        \item[(i)] $C$ is exact.
        \item[(ii)] $\text{TotCof}(C)$ is a zero object.
    \end{enumerate}
\end{cor}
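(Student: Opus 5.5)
The corollary follows almost formally from Proposition \ref{prop:totfib-totcof}, since exactness of $C$ is by definition a statement about $\text{TotFib}(C)$. The plan is to transport it across the canonical equivalence $\text{TotCof}(C) \simeq \Sigma^n \text{TotFib}(C)$, using that $\Sigma^n$ is an autoequivalence of $\Ce$.

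First I will record that in a stable $\infty$-category the suspension functor $\Sigma : \Ce \to \Ce$ is an equivalence, with inverse $\Omega$; this is part of the standard theory recalled in \S\ref{subsec:stable}. It follows that $\Sigma^n$ is an equivalence for every $n \geq 0$. Second, I will note that any equivalence of $\infty$-categories preserves and reflects zero objects. An object $x$ is a zero object iff it is both initial and final, i.e.\ $\text{Map}_\Ce(x,y)$ and $\text{Map}_\Ce(y,x)$ are contractible for all $y$. An equivalence induces equivalences on mapping spaces and is essentially surjective, so $x$ is zero iff $\Sigma^n x$ is zero.

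Combining these: $C$ is exact iff $\text{TotFib}(C) \simeq 0$, iff $\Sigma^n\text{TotFib}(C) \simeq 0$, iff $\text{TotCof}(C) \simeq 0$. The last step uses Proposition \ref{prop:totfib-totcof} together with the fact that being a zero object is invariant under equivalence in $\Ce$.

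There is no real obstacle. The only point needing care is the invertibility of $\Sigma$, which I take as known from the basic theory of stable $\infty$-categories. For a more elementary justification, one can argue via the biCartesian squares defining $\Sigma x$: if $\Sigma x \simeq 0$, then the square exhibiting $\Sigma x$ as the pushout of $0 \leftarrow x \to 0$ is also a pullback, so $x \simeq 0 \times_0 0 \simeq 0$. Iterating this gives the claim for $\Sigma^n$.
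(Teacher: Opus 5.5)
Your proposal is correct and follows the paper's own route: the paper gives no separate argument and states the corollary as a direct consequence of Proposition~\ref{prop:totfib-totcof}, $\text{TotCof}(C) \simeq \Sigma^n \text{TotFib}(C)$. Your remark that $\Sigma^n$ is an autoequivalence and therefore preserves and reflects zero objects is exactly the step the paper leaves implicit.
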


\begin{rem}\label{rem:cofcube}
    Let $C \in \text{Ch}^n(\Ce)$ be a complex of length $n\geq1$. Notice that we may view $C$ as a morphism
    \[
    C\vert_{I^{n-1}\times\{0\}} \longrightarrow C\vert_{I^{n-1}\times\{1\}}
    \]
    in the $\infty$-category $\text{Ch}^{n-1}(\Ce)$. We denote the cofiber of this morphism taken in $\text{Ch}^{n-1}(\Ce)$ by $\text{Cof}(C)$. We thus obtain a functor
    \[
    \text{Cof} : \text{Ch}^n(\Ce) \longrightarrow \text{Ch}^{n-1}(\Ce).
    \]
    Furthermore, the diagrams
    \[\begin{tikzcd}
        \text{Ch}^n(\Ce) \ar[r,"\text{Cof}"] \ar[rd,"-\vert _{I^{n-1}\times \{1\}\setminus\{\sigma^{(1)}\}}"'] & \text{Ch}^{n-1}(\Ce) \ar[d,"-\vert_{I^{n-1}\setminus\{\sigma^{(0)}\}}"]\\
        & \text{Fun}(I^{n-1}\setminus\{\sigma^{(0)}\},\Ce)
    \end{tikzcd}\]
    and
    \[
    \begin{tikzcd}
        \text{Ch}^n(\Ce) \ar[r,"\text{Cof}"] \ar[d,"-\vert_{\{0\}^{n-1}\times I}"'] & \text{Ch}^{n-1}(\Ce) \ar[d,"-\vert_{I^{n-1}\setminus\{\sigma^{(0)}\}}"]\\
        \text{Fun}(I,\Ce) \ar[r,"\text{Cof}"] & \Ce
    \end{tikzcd}
    \]
    commute.
\end{rem}

The following inductive characterisation follows from \cite[Prop. 1.2.4.15]{lurie-ha}.

\begin{lem}{\cite[Prop. A.11]{dyck_cubes}}\label{lem:1}
    Let $n \geq 1$ be an integer. Then, a complex $C\in \text{Ch}^n(\Ce)$ is exact if and only if $\text{Cof}(C)$ is exact.\qed
\end{lem}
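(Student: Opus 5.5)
The plan is to convert exactness into the vanishing of a total cofiber and then use that total cofibers can be computed one coordinate at a time. Corollary~\ref{prop:2} says that $C \in \text{Ch}^n(\Ce)$ is exact if and only if $\text{TotCof}(C) \simeq 0$. The same holds for $\text{Cof}(C) \in \text{Ch}^{n-1}(\Ce)$. So it suffices to construct an equivalence
\[
\text{TotCof}(C) \simeq \text{TotCof}(\text{Cof}(C))
\]
in $\Ce$.

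First I would check that $\text{Cof}(C)$ is the pointwise cofiber of $C\vert_{I^{n-1}\times\{0\}} \to C\vert_{I^{n-1}\times\{1\}}$. Cofibers in $\text{Fun}(I^{n-1},\Ce)$ are computed pointwise. Hence it is enough to see that $\text{Ch}^{n-1}(\Ce)$ is closed under this pointwise cofiber. Suppose $\tau \in I^{n-1}$ is degenerate. Then $(\tau,0)$ and $(\tau,1)$ are both degenerate in $I^n$, since a vertex of the form $\sigma^{(i)}$ stays of that form when its last coordinate is deleted. So both values of $C$ at these vertices are zero, and their cofiber is zero. Therefore the cofiber computed in $\text{Fun}(I^{n-1},\Ce)$ lies in $\text{Ch}^{n-1}(\Ce)$ and is also the cofiber there. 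This is consistent with the commutative diagrams of Remark~\ref{rem:cofcube}.

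Next comes the main step. Write $C$ as a morphism $f : C' \to C''$ of $(n-1)$-cubes, where $C' = C\vert_{I^{n-1}\times\{0\}}$ and $C'' = C\vert_{I^{n-1}\times\{1\}}$. I would show that
\[
\text{TotCof}(C) \simeq \text{Cof}\bigl(\text{TotCof}(C') \to \text{TotCof}(C'')\bigr).
\]
The argument uses the decomposition of $\{\sigma \in I^n : |\sigma|<n\}$ into $I^{n-1}\times\{0\}$ and $\{\tau : |\tau|<n-1\}\times\{1\}$, glued along $\{\tau:|\tau|<n-1\}\times\{0\}$. A cofinality argument (the Kerodon cofinality lemma, dualized) then identifies the colimit over the punctured cube with the pushout
\[
C'_{\mathbf 1} \sqcup_{\operatorname{colim}_{|\tau|<n-1} C'_\tau} \operatorname{colim}_{|\tau|<n-1} C''_\tau,
\]
where $\mathbf{1}$ denotes the terminal vertex of $I^{n-1}$. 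Taking the cofiber of this pushout mapping to $C''_{\mathbf 1}$ and applying the pasting lemma (Lemma~\ref{lem:5}) gives the displayed formula. This bookkeeping of the cofinal subdiagram is the step I expect to be the main obstacle.

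Finally, $\text{TotCof}$ is a finite colimit construction, so it is an exact functor $\text{Fun}(I^{n-1},\Ce)\to\Ce$. It therefore commutes with cofibers, giving
\[
\text{Cof}\bigl(\text{TotCof}(C')\to\text{TotCof}(C'')\bigr) \simeq \text{TotCof}(\text{Cof}(f)) = \text{TotCof}(\text{Cof}(C)).
\]
Combining this with the previous step yields the desired equivalence, and Corollary~\ref{prop:2} finishes the proof. As an alternative route, I could use Remark~\ref{rem:exact-limit} together with the fact that in a stable $\infty$-category a cube is Cartesian if and only if it is coCartesian, and then cite \cite[Prop. 1.2.4.15]{lurie-ha} directly.
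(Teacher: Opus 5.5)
Your proof is correct, but it does not follow the paper's route. The paper gives no argument of its own: it cites \cite[Prop. A.11]{dyck_cubes} and remarks that the statement follows from \cite[Prop. 1.2.4.15]{lurie-ha}. In other words, it works at the level of (co)Cartesian cubes. It identifies exactness with Cartesianness via Remark~\ref{rem:exact-limit} and then appeals to the known inductive characterisation of such cubes in a stable $\infty$-category. This is essentially the alternative you sketch in your last sentence.

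You instead prove the sharper statement $\text{TotCof}(X)\simeq\text{TotCof}(\text{Cof}(X))$ for an arbitrary cube $X\in\text{Fun}(I^n,\Ce)$, an iterated-cofiber formula for total cofibers. The only stable input is then Corollary~\ref{prop:2}, i.e.\ the cited comparison $\text{TotCof}\simeq\Sigma^n\circ\text{TotFib}$. Your route is self-contained up to standard colimit manipulations and produces an explicit equivalence.

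Your route also never uses that $C$ is a complex, except when invoking Corollary~\ref{prop:2}. So it shows that any cube $X$ is coCartesian if and only if $\text{Cof}(X)$ is. That is the form the paper actually uses in the reconstruction theorem, where $q\circ S$ is not a coherent complex. The citation route is shorter and stays on the limit side, matching the definition of exactness via $\text{TotFib}$.

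One correction is needed in the bookkeeping step you flagged. Write $P_k:=\{\tau\in I^k : |\tau|<k\}$. The pieces $I^{n-1}\times\{0\}$ and $P_{n-1}\times\{1\}$ are disjoint, so they are not glued along $P_{n-1}\times\{0\}$. The right decomposition is $P_n=(I^{n-1}\times\{0\})\cup(P_{n-1}\times I)$, with intersection $P_{n-1}\times\{0\}$. Any chain in $P_n$ through $(\mathbf 1,0)$ lies in $I^{n-1}\times\{0\}$, so this is a genuine union on nerves.

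The colimit then splits as a pushout by the decomposition of colimits over a union of simplicial subsets. Equivalently, you can left Kan extend along the evident functor $P_n\to\Lambda^2_0$. Cofinality is needed only to replace $\operatorname{colim}_{P_{n-1}\times I}C$ by $\operatorname{colim}_{P_{n-1}}C''$. This holds because $P_{n-1}\times\{1\}\hookrightarrow P_{n-1}\times I$ is right cofinal: each relevant undercategory has an initial object.

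With this fix, your pushout formula, the pasting step and the exactness-of-$\text{TotCof}$ step all go through. This includes the case $n=1$, where $P_0=\varnothing$.
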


\begin{prop}\label{prop:totfib}
    Let $n \geq 1$ and $C \in \text{Ch}_{\text{ex}}^n(\Ce)$. Then, we have
    \[
    C_0 = \text{TotFib}(C\vert_{I^{n-1}\times\{1\}}).
    \]
\end{prop}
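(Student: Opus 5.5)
The plan is to read the exact complex $C$ as a morphism of $(n-1)$-cubes in the last coordinate, as in Remark \ref{rem:cofcube}, but using total fibers instead of cofibers. Write
\[
A := C\vert_{I^{n-1}\times\{0\}}, \qquad B := C\vert_{I^{n-1}\times\{1\}},
\]
so that $C$ is a natural transformation $A \to B$ of functors $I^{n-1} \to \Ce$.

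First I check that $B$ lies in $\text{Ch}^{n-1}(\Ce)$, so that the statement makes sense. The vertex $(\tau,1)$ equals $\sigma^{(i)}$ for some $i$ exactly when $i\ge 1$ and $\tau = \sigma^{(i-1)}$. Hence degenerate vertices of $I^{n-1}$ are sent to degenerate vertices of $I^n$, and therefore to zero objects. The same reasoning for $A$ shows that $(\tau,0)$ is non-degenerate only when $\tau = (0,\dots,0)$. So every vertex of $A$ other than the initial one is sent to a zero object. Consequently $\lim_{|\tau|>0} A_\tau$ is a limit of a diagram of zero objects, hence a zero object, and $\text{TotFib}(A) \simeq A_{(0,\dots,0)} = C_0$.

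The key step is the standard iterated-limit identity
\[
\text{TotFib}(C) \simeq \text{Fib}\bigl(\text{TotFib}(A) \to \text{TotFib}(B)\bigr),
\]
or equivalently the statement that $C$ is a Cartesian cube if and only if the induced map $\text{TotFib}(A) \to \text{TotFib}(B)$ is an equivalence. This is the content of \cite[Prop. 1.2.4.15]{lurie-ha}, the same input used for Lemma \ref{lem:1}, and I would cite it directly. To prove it by hand, I would compute the limit over $I^n\setminus\{0\}$ in two stages. The subposet $I^n\setminus\{0\}$ is the union of the face $\{x_n=1\}\cong I^{n-1}$ and of $(I^{n-1}\setminus\{0\})\times\{0\}$. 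Right Kan extending along the inclusion of the full subcategory $(I^{n-1}\setminus\{0\})\times I$ and using the dual of Lemma \ref{lem:2}, together with cofinality (the Kerodon lemma on left cofinal maps), one gets
\[
\lim_{I^n\setminus 0} C \simeq \lim_{I^{n-1}\setminus 0} A \times_{\lim_{I^{n-1}\setminus 0} B} B_0 .
\]
The pasting Lemma \ref{lem:5} then identifies the total fiber of $C$ with the fiber of the comparison map $\text{TotFib}(A)\to\text{TotFib}(B)$. This bookkeeping is the main obstacle; that is why I would prefer to quote the cited proposition rather than redo it.

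Finally, I combine the pieces. Since $C$ is exact, Remark \ref{rem:exact-limit} says $C$ is a Cartesian cube, so the map $\text{TotFib}(A) \to \text{TotFib}(B)$ is an equivalence. Together with $\text{TotFib}(A)\simeq C_0$ this gives
\[
C_0 \simeq \text{TotFib}\bigl(C\vert_{I^{n-1}\times\{1\}}\bigr).
\]
Under this identification, the equivalence is the canonical map from $C_0$ induced by the edge $C_0 \to C_1$ together with the null-homotopies.
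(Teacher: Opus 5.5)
Your proof is correct, but it runs on the fiber side where the paper runs on the cofiber side.

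Both arguments split $C$ along the last coordinate as a map $A\to B$, and both use that $A$ vanishes away from its initial vertex. The paper then passes to the cofiber cube $\text{Cof}(C)\in\text{Ch}^{n-1}(\Ce)$ of Remark \ref{rem:cofcube}. This cube is exact by Lemma \ref{lem:1}, and it agrees with $B$ away from the initial vertex. Remark \ref{rem:exact-limit} therefore gives $\lim_{|\sigma|>0}B_\sigma\simeq\text{Cof}(C_0\to C_1)$. Under this identification the canonical map $B_0=C_1\to\lim_{|\sigma|>0}B_\sigma$ becomes $C_1\to\text{Cof}(C_0\to C_1)$, whose fiber is $C_0$. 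You instead read off $\text{TotFib}(A)\simeq C_0$ directly and use the identity $\text{TotFib}(C)\simeq\text{Fib}\bigl(\text{TotFib}(A)\to\text{TotFib}(B)\bigr)$.

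What each route buys:
\begin{itemize}
\item The paper's route uses only results already on the page: Lemma \ref{lem:1}, which it needs elsewhere anyway, and naturality of $B\to\text{Cof}(C)$. As a by-product it identifies $\lim_{|\sigma|>0}B_\sigma$ with the cofiber of the first differential.
\item Your route needs nothing beyond limits commuting with limits plus stability. It also shows the equivalence $C_0\to\text{TotFib}(B)$ explicitly as the map of total fibers induced by $C$.
\item The price of your route is an identity the paper never records. It is the fiber-side counterpart of Lemma \ref{lem:1}, not Lemma \ref{lem:1} itself, so your citation holds only up to dualization.
\end{itemize}

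Your two-stage formula $\lim_{I^n\setminus 0}C\simeq\lim_{I^{n-1}\setminus 0}A\times_{\lim_{I^{n-1}\setminus 0}B}B_0$ is correct and supplies the identity. Its justification in your sketch needs a fix, though. $C$ is not right Kan extended from $(I^{n-1}\setminus 0)\times I$: that extension would place $\lim_{I^{n-1}\setminus 0}B$ at the vertex $\sigma^{(1)}$ instead of $C_1$. The formula holds for a different reason. Every chain in $I^n\setminus 0$ lies either in $(I^{n-1}\setminus 0)\times I$ or in $I^{n-1}\times\{1\}$. Hence the limit splits as a pullback over their intersection $(I^{n-1}\setminus 0)\times\{1\}$, and cofinality then computes the three pieces.
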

\begin{proof}
     Consider the complex $\text{Cof}(C) \in \text{Ch}^{n-1}(\EuScript C)$. By Lemma \ref{lem:1}, $\text{Cof}(C)$ is exact and we have \[
     \text{Cof}(C)\vert_{I^{n-1} \setminus\{\sigma^{(0)}\}} = C\vert_{I^{n-1}\times\{1\} \setminus \{\sigma^{(1)}\}}.\]
    By Remark \ref{rem:exact-limit}, we have
    \[
    \text{Cof}(C)_0 = \lim_{\vert\sigma\vert > 0} \text{Cof}(C)_\sigma = \lim_{\vert\sigma\vert > 0} (C\vert_{I^{n-1}\times\{1\}})_\sigma
    \]
    and the canonical map
    \[
    C_1 \longrightarrow \lim_{\vert\sigma\vert > 0} (C\vert_{I^{n-1}\times\{1\}})_\sigma
    \]
    is given by the canonical map
    \[
    C_1 \longrightarrow \text{Cof}\{C_0 \rightarrow C_1\} = \text{Cof}(C)_0.
    \]
    Hence, the fibers of the maps above coincide, which proves the claim.
\end{proof}

\begin{prop}\label{prop:4}
    For $n \geq 1$, the functor \[\text{Ch}^n_{\text{ex}}(\EuScript C) \longrightarrow \text{Ch}^{n-1}(\EuScript C)\] induced by restriction to $I^{n-1}\times\{1\}$ is a trivial Kan fibration.
\end{prop}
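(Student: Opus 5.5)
The plan is to factor the restriction functor through right Kan extension along the inclusion $I^{n-1}\times\{1\} \subset I^n$, using the dual form of Lemma \ref{lem:2}. Set $\Ce_0 := I^{n-1}\times\{1\}$, a full subcategory of $I^n$. Every vertex of $I^n$ outside $\Ce_0$ has the form $(\tau,0)$. The undercategory $(\Ce_0)_{(\tau,0)/}$ is the subposet $\{(\rho,1) : \rho \geq \tau\}$, which has initial object $(\tau,1)$. Hence a right Kan extension of $D : \Ce_0 \to \Ce$ always exists and is given by $(\tau,0)\mapsto D(\tau)$, with the maps $(\tau,0)\to(\tau,1)$ equivalences. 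This is too naive, however, because we need $C_{(\tau,0)} = 0$ for degenerate $(\tau,0)$ and $C_0$ equal to a total fiber. So I would instead use the subcategory
\[
\Ce_0' := (I^{n-1}\times\{1\}) \cup \{(\tau,0) : (\tau,0) \text{ degenerate}\} = I^n \setminus\{\sigma^{(0)}\},
\]
noting that $(\tau,0)$ is nondegenerate only for $\tau = 0$, i.e.\ only at $\sigma^{(0)}$.

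The argument then runs in two steps. First, exact complexes are exactly the objects of $\text{Ch}^n(\Ce)$ whose restriction to $I^n\setminus\{\sigma^{(0)}\}$ is arbitrary subject to the degeneracy conditions and which are right Kan extended from $I^n\setminus\{\sigma^{(0)}\}$. Indeed, the undercategory of $\sigma^{(0)}$ in $I^n\setminus\{\sigma^{(0)}\}$ is $\{|\sigma|>0\}$, so by Remark \ref{rem:exact-limit} this is the Cartesian-cube condition. By the dual of Lemma \ref{lem:2} (the limits exist because $\Ce$ is stable and the poset is finite), restriction $\text{Ch}^n_{\text{ex}}(\Ce) \to \Fc$ is a trivial Kan fibration. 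Here $\Fc \subset \text{Fun}(I^n\setminus\{\sigma^{(0)}\},\Ce)$ is the full subcategory of functors vanishing on the degenerate vertices. Fullness is compatible on both sides, since trivial fibrations restrict to trivial fibrations over full subcategories when the condition is checked objectwise away from $\sigma^{(0)}$.

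Second, restriction $\Fc \to \text{Ch}^{n-1}(\Ce)$ along $I^{n-1}\times\{1\}$ should itself be a trivial Kan fibration, now via Lemma \ref{lem:2} in its left Kan extension form. The complement consists of the vertices $(\tau,0)$ with $\tau \neq 0$. The overcategory of such a vertex in $I^{n-1}\times\{1\}$ is empty, so left Kan extension assigns an initial object, i.e.\ zero. Conversely, a functor vanishing on these vertices is a left Kan extension of its restriction, because a zero object is initial. Functors on $I^{n-1}\times\{1\}$ that are complexes, meaning they vanish on degenerate vertices of $I^{n-1}$, correspond exactly to functors in $\Fc$. The degenerate vertices of $I^n$ lying in $I^{n-1}\times\{1\}$ are precisely $(\tau,1)$ with $\tau$ degenerate in $I^{n-1}$. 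This matches because $\sigma^{(i)}$ of $I^n$ with $i\geq1$ is $(\sigma^{(i-1)},1)$.

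Composing the two trivial Kan fibrations gives the claim. The main obstacle is bookkeeping: checking that the full subcategories cut out by the vanishing conditions coincide on both sides of each application of Lemma \ref{lem:2}, and that the left Kan extension step is legitimate when the target vertices are not all "above" the source. The latter uses that $\{\tau\neq 0\}\times\{0\}$ is downward-closed relative to $I^{n-1}\times\{1\}$ in the relevant sense, so the overcategories really are empty. As a consistency check, Proposition \ref{prop:totfib} confirms that the value at $\sigma^{(0)}$ is the total fiber, matching the Kan extension computation.
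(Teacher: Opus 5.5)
Your proposal is correct and follows essentially the same route as the paper. The paper also applies Lemma \ref{lem:2} twice: a left Kan extension by zeros (the overcategories of the vertices $(\tau,0)$, $\tau\neq 0$, in $I^{n-1}\times\{1\}$ are empty), and a right Kan extension to $\sigma^{(0)}$ (exactness is the Cartesian-cube condition). It then passes to the full subcategories cut out by the vanishing conditions. The only difference is cosmetic: you impose the degeneracy conditions at the intermediate stage $I^n\setminus\{\sigma^{(0)}\}$ and compose two trivial fibrations, whereas the paper forms the trivial fibration onto $\text{Fun}(I^{n-1},\Ce)$ first and pulls back along $\text{Ch}^{n-1}(\Ce)$ at the end.
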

\begin{proof}
    We denote by $\text{Fun}'(I^n, \EuScript C) \subset \text{Fun}(I^n,\EuScript C)$ the full subcategory spanned by Cartesian cubes $Y$ such that $Y(\sigma) = 0$ for $\sigma \in (I^{n-1}\times\{0\}) \setminus \{\sigma^{(0)}\}$. Applying Lemma \ref{lem:2} twice, we deduce that restriction to $I^{n-1} \times\{1\}$ induces a trivial Kan fibration
    \[
    \text{Fun}'(I^n, \EuScript C) \longrightarrow \text{Fun}(I^{n-1}, \EuScript C).
    \]
    Furthermore, $\text{Ch}^n_\text{ex}(\EuScript C)$ is a full subcategory of $\text{Fun}'(I^n, \EuScript C)$ (by \ref{prop:2}) consisting precisely of those functors $Y \in \text{Fun}'(I^n, \EuScript C)$ such that $Y\vert_{I^{n-1}\times\{1\}} \in \text{Ch}^{n-1}(\EuScript C)$. Therefore, restriction to $I^{n-1} \times\{1\}$ induces a trivial Kan fibration
    \[
    \text{Ch}^n_\text{ex}(\EuScript C) \longrightarrow \text{Ch}^{n-1}(\EuScript C).\qedhere
    \]
\end{proof}

\paragraph{An equivalence of $\infty$-categories.}\label{par:equiv} The purpose of this subsection is a direct proof of the following proposition, which was stated in this form in \cite[Prop. 3.21]{dyck_pervscho}:

\begin{prop}
    Let $\EuScript C$ be a stable $\infty$-category and $n \geq 0$. There exists an equivalence of $\infty$-categories
    \[
    S_n(\EuScript C) \longrightarrow \text{Ch}^{n}_{\text{ex}}(\EuScript C)
    \]
    taking a Waldhausen diagram $X \in S_n(\EuScript C)$ to an exact coherent complex of the form
    \[
    X_{0,n} \rightarrow X_{n-1,n} \rightarrow X_{n-2,n-1}[1] \rightarrow \dots \rightarrow X_{01}[n-1].
    \]
\end{prop}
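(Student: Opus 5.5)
I would prove this by induction on $n$, constructing the equivalence as a zig-zag of trivial Kan fibrations so that no explicit inverse is ever needed. The cases $n=0,1$ are immediate: $S_0(\Ce)$ and $\text{Ch}^0_{\text{ex}}(\Ce)$ are both the full subcategory of zero objects, and $S_1(\Ce)\simeq\Ce\simeq \text{Ch}^1_{\text{ex}}(\Ce)$. The last equivalence holds because an exact complex $C_0\to C_1$ has $C_0\to C_1$ an equivalence, and the map to $X_{01}$ is evaluation at $(0,1)$. For the inductive step I would compare the two sides through a common intermediate $\infty$-category $\Me_n$. Concretely, $\Me_n$ is the full subcategory of $\text{Fun}(K_n,\Ce)$, where $K_n$ is a finite poset containing both $J(n)$ and $I^n$ glued along suitable identifications. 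Its objects are diagrams whose restriction to $J(n)$ is a Waldhausen diagram, whose restriction to $I^n$ is an exact coherent complex, and whose remaining vertices are determined by iterated cofiber (biCartesian) squares.

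The key observation for the gluing is that the last column $X_{0,n}\to X_{1,n}\to\cdots\to X_{n-1,n}$ of a Waldhausen diagram is a coherent diagram $[n-1]\to \Ce$. By Proposition~\ref{prop:1}, restriction to this column is a trivial Kan fibration. On the complex side, Proposition~\ref{prop:4} shows that an exact complex $C\in\text{Ch}^n_{\text{ex}}(\Ce)$ is determined, up to contractible choice, by $C\vert_{I^{n-1}\times\{1\}}\in\text{Ch}^{n-1}(\Ce)$. Moreover, $\text{Cof}(C)\in\text{Ch}^{n-1}_{\text{ex}}(\Ce)$ by Lemma~\ref{lem:1}, with $\text{Cof}(C)_0=\text{Cof}(C_0\to C_1)$. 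So I would set $C_0=X_{0,n}$ and $C_1=X_{n-1,n}$, the latter being the cofiber of $X_{0,n-1}\to X_{0,n}$. Then I would apply the inductive hypothesis to the truncated Waldhausen diagram $X\vert_{J(n-1)}$, which yields an exact complex $X_{0,n-1}\to X_{n-2,n-1}\to\cdots\to X_{01}[n-2]$. After suspending, this should be identified with the remaining part $C_1\to C_2\to\cdots\to C_n$, where $C_1$ is identified with $\Sigma X_{0,n-1}$ modulo the first term. Before fixing $K_n$, I would check the cube $I^2$ and $J(2)$ by hand, since in that case the passage from $X_{0,1}\to X_{0,2}\to X_{1,2}$ to the exact square $X_{02}\to X_{12}\to X_{01}[1]$ is the familiar rotation of a cofiber sequence.

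The formal argument then has two halves. In the first, I would show that the restriction $\Me_n\to S_n(\Ce)$ is a trivial Kan fibration by applying Lemma~\ref{lem:2} and its dual repeatedly. Each new vertex of $K_n\setminus J(n)$ is either a zero object (a right Kan extension from the empty diagram) or a cofiber, that is, a left Kan extension along a square with a zero corner. In the second half, I would show that $\Me_n\to\text{Ch}^n_{\text{ex}}(\Ce)$ is a trivial Kan fibration in the same way, together with Proposition~\ref{prop:4}, and Proposition~\ref{prop:totfib} to recover $X_{0,n}=C_0$ as a total fiber. Choosing a section of the first map and composing with the second gives the desired functor, and by construction it sends $X$ to the stated complex.

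I expect the main obstacle to be the combinatorics of $K_n$. The suspensions $X_{i,i+1}[n-1-i]$ arise from nested rotations, so each suspended term has to be realized as an actual vertex produced by a biCartesian square with zero corners. At the same time, the degenerate vertices of $I^n$ must go to zero objects and the squares must paste correctly via Lemma~\ref{lem:5}. My first attempt would be to take $K_n$ to be $J(n)$ together with $n-1$ successive copies of shifted staircases, each attached via a row of zeros, mimicking the standard proof that $S_\bullet$ unfolds into the $\mathbb{Z}$-indexed diagram of rotated triangles. I would then verify that $I^n$ embeds into it, with non-degenerate vertices landing on $X_{0,n},X_{n-1,n},\dots$ and degenerate vertices landing on zeros.
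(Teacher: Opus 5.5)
Your proposal has a genuine gap. Everything of substance is delegated to the poset $K_n$ and the category $\Me_n$, and neither is constructed. The phrase ``by Lemma~\ref{lem:2} and its dual repeatedly'' presupposes exactly what must be proved: that the conditions defining $\Me_n$ are equivalent \emph{both} to being iteratively Kan extended from $J(n)$ \emph{and} to being iteratively Kan extended from $I^n$.

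The candidate you suggest also cannot work for $n\geq 3$. The $\mathbb{Z}$-unfolding of rotated triangles is indexed by a subposet of $\mathbb{Z}\times\mathbb{Z}$, which has order dimension at most $2$, whereas $I^n$ has order dimension $n$. So no map $I^n\to K_n$ with non-degenerate vertices going to $X_{0,n},X_{n-1,n},X_{n-2,n-1}[1],\dots$ and degenerate ones to zeros can be full. Lemma~\ref{lem:2} only concerns restriction to a \emph{full} subcategory, so it tells you nothing about $\Me_n\to\text{Ch}^n_{\text{ex}}(\Ce)$. In fact, for the standard unfolding with $n=3$ one checks that any such map must identify two degenerate vertices. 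Restriction along it is then not even surjective on objects, so it cannot be a trivial Kan fibration. Conceptually, an exact complex of length $n$ carries $n$-dimensional coherence data (iterated null-homotopies), and these must be produced by a genuinely higher-dimensional auxiliary diagram, not a planar one.

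Your inductive heuristic does not close the gap either. It is correct that $\text{Cof}(C)$ is the suspension of the complex attached to $X\vert_{J(n-1)}$, with first term $X_{0,n-1}[1]\simeq\text{Cof}(X_{0,n}\to X_{n-1,n})$. But $C$ is not recovered from $\text{Cof}(C)$, and the induction hypothesis is never used in your formal argument.

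The missing idea is to avoid putting $J(n)$ and $I^n$ into a single diagram and instead pass through the last column. You already cite the two outer trivial fibrations: $v\colon S_n(\Ce)\to\text{Fun}([n-1],\Ce)$ (Proposition~\ref{prop:1}) and $\text{Ch}^n_{\text{ex}}(\Ce)\to\text{Ch}^{n-1}(\Ce)$ (Proposition~\ref{prop:4}). The heart of the proof is a Dold--Kan type equivalence $\text{Fun}([n-1],\Ce)\simeq\text{Ch}^{n-1}(\Ce)$ between arbitrary diagrams and arbitrary, not necessarily exact, complexes. On the last column it turns $X_{0,n}\to\dots\to X_{n-1,n}$ into $X_{n-1,n}\to X_{n-2,n-1}[1]\to\dots\to X_{01}[n-1]$.

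The paper obtains this from the $n$-dimensional poset $L(n-1)\subset I^{n-1}\times[n-1]$ and the recursively defined $\Le_{n-1}$: the outer cube is required to be exact and the back face to lie in $\Le_{n-2}$. It then proves by induction that both $\Le_{n-1}\to\text{Fun}([n-1],\Ce)$ (Lemma~\ref{prop:9}) and $\Le_{n-1}\to\text{Ch}^{n-1}(\Ce)$ (Lemma~\ref{prop:8}) are trivial fibrations.

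With these results your framework does go through. Take $\Me_n:=S_n(\Ce)\times_{\text{Fun}([n-1],\Ce)}\Le_{n-1}\times_{\text{Ch}^{n-1}(\Ce)}\text{Ch}^n_{\text{ex}}(\Ce)$; its projections to both ends are composites of base changes of trivial fibrations. That, however, is precisely the paper's argument. Even then the identification of the terms is not ``by construction''. The paper needs a separate induction along $j_n\colon J(n-1)\to J(n)$, together with Propositions~\ref{prop:totfib} and~\ref{prop:totfib-totcof} and Lemma~\ref{lem:1}, to see that the last term is $X_{01}[n-1]$.
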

The case $n=0$ is trivial. For $n \geq 1$, we prove the following more refined statement.
\begin{prop}\label{thm:1}
    There exists an equivalence of $\infty$-categories
    \[
    \Theta : S_n(\EuScript C) \longrightarrow \text{Ch}^{n-1}(\EuScript C)
    \]
    such that for $X \in S_n(\Ce)$, the following hold:
    \begin{enumerate}
        \item For $0 \leq i \leq n-1$, the object $X_{i,n}$ is the total fiber of $\Theta(X)\vert_{\{0\}^i \times I^{n-i-1}}$.
        \item For $0 \leq i \leq n-1$, the object $\Theta(X)_i$ is equivalent to $X_{n-i-1, n-i}[i]$.
    \end{enumerate}
\end{prop}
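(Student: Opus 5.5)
We construct $\Theta$ by induction on $n$, as a zig-zag of trivial Kan fibrations between auxiliary $\infty$-categories of diagrams. The equivalence is then obtained by choosing sections, which is harmless since trivial Kan fibrations have contractible spaces of sections. The guiding picture is the following. The last column $X_{0,n}\to X_{1,n}\to\dots\to X_{n-1,n}$ of a Waldhausen diagram already determines $X$, by Proposition \ref{prop:1} applied to $v$. An arbitrary $(n-1)$-cube in $\Ch^{n-1}(\Ce)$ likewise determines an exact $n$-cube, by Proposition \ref{prop:4}. So it suffices to relate ``a chain of $n$ composable maps'' to ``a coherent complex of length $n-1$'' in a way compatible with the total fibers.

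For the base case $n=1$, both $S_1(\Ce)$ and $\Ch^0(\Ce)$ are equivalent to $\Ce$, via evaluation at $(0,1)$ and at the unique vertex. Properties (1) and (2) are then tautological.

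For the inductive step I would introduce an auxiliary poset $K$ which contains both $J(n)$ (or just its last column) and a cube $I^{n-1}$. The cube is glued in so that the last column $X_{i,n}$ sits at the vertices $\{0\}^i\times\ldots$, as total fibers of the faces $\{0\}^i\times I^{n-i-1}$. Concretely, I would take the full subcategory $\Ee\subset\Fun(K,\Ce)$ consisting of diagrams that are Waldhausen on $J(n)$, are coherent complexes on $I^{n-1}$, and satisfy the following: for each $i$, the restriction to the $(n-i)$-cube built from $X_{i,n}$ and the face $\{0\}^i\times I^{n-i-1}$ is an exact cube. The two restriction maps $S_n(\Ce)\leftarrow\Ee\to\Ch^{n-1}(\Ce)$ should be trivial Kan fibrations. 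For the map to $\Ch^{n-1}(\Ce)$, Proposition \ref{prop:4} and Proposition \ref{prop:totfib} (iterated over the nested faces) show that the $X_{i,n}$ are obtained as right Kan extensions, i.e.\ iterated total fibers, and the rest of $J(n)$ then follows from Proposition \ref{prop:1}. Lemma \ref{lem:2} in its dual form packages each stage as a trivial fibration. For the map to $S_n(\Ce)$, I would instead show that the cube is a left Kan extension of the last column, using iterated cofibers. Here the $i$-th vertex $\sigma^{(i)}$ receives $\Cof$ of the previous stage, which by Remark \ref{rem:cofcube} and Lemma \ref{lem:1} reduces the length by one. Again Lemma \ref{lem:2} gives a trivial fibration. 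Property (1) holds by construction.

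For property (2), I would compute $\Theta(X)_i$ by iterating the cofiber construction of Remark \ref{rem:cofcube}. The commutative squares there identify the differentials after taking one cofiber. Inductively, the vertex $C_i$ is an $i$-fold iterated cofiber starting from the map $X_{n-i-1,n}\to X_{n-i,n}$. Its cofiber is $X_{n-i-1,n-i}$ by the biCartesian square in $X$ with corners $X_{n-i-1,n-i},X_{n-i-1,n},X_{n-i,n-i}=0,X_{n-i,n}$, and each further cofiber against a zero object contributes a suspension. This gives $X_{n-i-1,n-i}[i]$, with the pasting lemma (Lemma \ref{lem:5}) used to glue the squares.

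I expect the main obstacle to be the combinatorics of the poset $K$. The vertices of $I^{n-1}$ must be glued to the column of $J(n)$ so that the relevant overcategories and undercategories in Lemma \ref{lem:2} are exactly the faces $\{0\}^i\times I^{n-i-1}$, with no other vertices interfering. My first attempt would be to take $K=I^{n}$, with the last column placed along the edge $\{0\}^{i}\times\{0\}$-type vertices (the first axis encoding $i$), and use the fact that exactness of a cube is detected by iterated cofibers (Lemma \ref{lem:1}). This would reduce everything to a repeated application of Proposition \ref{prop:4} on nested faces.
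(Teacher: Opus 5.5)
Your architecture matches the paper's. There, $\Theta$ is the zig-zag $S_n(\Ce)\overset{v}{\longrightarrow}\text{Fun}([n-1],\Ce)\longleftarrow\Le_{n-1}\longrightarrow\text{Ch}^{n-1}(\Ce)$ through a category of diagrams in which each $X_{i,n}$ is the initial vertex of an exact $(n-i)$-cube whose far face is $\{0\}^i\times I^{n-1-i}$ of the complex. Both legs are trivial fibrations, proved by induction from Proposition \ref{prop:4}, its dual and Lemma \ref{lem:2} (Lemmas \ref{prop:8}, \ref{prop:9}).

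\textbf{The gap: the indexing poset.} The ingredient that carries the proof is left open, and your candidate $K=I^n$ cannot work. The $i=0$ cube is all of $I^n$, so the complex sits on a facet, say $I^{n-1}\times\{1\}$. The bottom face $I^{n-1}\times\{0\}$ is then zero away from the origin, which carries $X_{0,n}$. Proposition \ref{prop:totfib} needs $\{0\}^i\times I^{n-1-i}\times\{1\}$ to be a facet not containing the initial vertex of the $i$-th cube. The only such $(n-i)$-subcube of $I^n$ is $\{0\}^i\times I^{n-1-i}\times I$, whose initial vertex is again the origin. So you would force $X_{i,n}\simeq X_{0,n}$ for all $i$, i.e.\ $X_{0,1}\simeq 0$.

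The missing idea is an extra height coordinate that gives each nested cube its own bottom face. The paper's $L(n-1)\subset I^{n-1}\times[n-1]$ has level $t<n-1$ equal to $\{0\}^t\times I^{n-1-t}$, with $X_{t,n}$ at $(0,\dots,0,t)$, and level $n-1$ equal to the complex. It is exactly the union of your cubes $\{0\}^t\times I^{n-1-t}\times\{t,n-1\}$ with the induced order. Its recursive description via $\lambda_{n-1}$ and $i_{n-1}$ is what lets the inductive pullback arguments run.

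\textbf{Two further repairs, even with the right poset.}

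First, the diagram is not literally a left Kan extension of the column, nor a right Kan extension of the complex. For example, the degenerate vertex $(1,0,\dots,0,0)$ of the outer cube lies above only $(0,\dots,0,0)$ among column vertices. A left Kan extension would therefore put $X_{0,n}$ there instead of $0$. The zeros must be imposed and extended in a separate stage; this is the $\Ke_n\to\Me_n$ step in Lemma \ref{prop:9}.

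Second, in your argument for (2), the square you cite shows that $X_{n-i-1,n-i}$ is the \emph{fiber} of $X_{n-i-1,n}\to X_{n-i,n}$. The cofiber is $X_{n-i-1,n-i}[1]$, so your suspension count is off by one. You also never show that $\Theta(X)_i$ is such an iterated cofiber. The paper proceeds as follows:
\begin{itemize}
\item For $i\le n-2$, it uses $\Theta(X)\vert_{\{0\}\times I^{n-2}}\simeq\Theta(X\circ j_n)$ together with the induction hypothesis.
\item For $i=n-1$, it writes $\Theta(X)_{n-1}$ as the total cofiber of the face $\{0\}\times I^{n-1}$ of the outer exact cube. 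Proposition \ref{prop:totfib-totcof} turns this into $\text{TotFib}(-)[n-1]$. That total fiber is then identified with $\text{Fib}(X_{0,n}\to X_{1,n})\simeq X_{0,1}$ via the auxiliary cube $G$, Lemma \ref{lem:1} and Proposition \ref{prop:totfib}.
\end{itemize}
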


\begin{notat}
For $n \geq 0$, we denote by
\[
L(n) := I^{n}\times\{n\} \cup \{\sigma \in I^n \times[n] \mid \sigma_k = 0 \text{ for }1 \leq k \leq \sigma_{n+1}\} \subset I^n\times[n]
\]
a poset with order $\sigma \leq \tau$ iff $\sigma_k \leq \tau_k$ for all $k$. As usual, we consider $L(n)$ as a category. There are injections
\begin{align*}
&\hspace{3cm}&\lambda_n : I^{n+1} &\longrightarrow L(n),& \sigma &\longmapsto (\sigma_1,\dots,\sigma_n,n\cdot \sigma_{n+1}),&&\hspace{3cm}\\
&\hspace{3cm}&i_n : L(n-1) &\longrightarrow L(n),& \sigma &\longmapsto (0,\sigma_1,\dots,\sigma_n,\sigma_{n+1} + 1). &&\hspace{3cm}
\end{align*}
We now recursively define a collection $\{\Le _n\}_{n\geq0}$ of auxiliary functor $\infty$-categories: Let $\Le_0 = \text{Fun}(L(0),\Ce) \simeq \Ce$. For $n \geq 1$, we let \[\Le_n \subset \text{Fun}(L(n),\EuScript C)\] the full subcategory spanned by diagrams $F$ such that $F \circ \lambda_n$ is an exact complex and $F\circ i_{n} \in \Le_{n-1}$.
\end{notat}

\newpage

\begin{ex}
    For $n =0,1,2,3$, we have:
    \begin{figure}[h]
        \begin{minipage}{.2\textwidth}
        \centering
            $L(0) = 0,$
        \end{minipage}
        \begin{minipage}{.3\textwidth}
        \hspace{5mm}
            $L(1) =$
            \begin{tikzcd}[row sep = large, column sep = large]
                00 \arrow[r] \arrow[d] & 10 \arrow[d]\\
                01 \arrow[r] & 11,
            \end{tikzcd}
        \end{minipage}
        \begin{minipage}{.5\textwidth}
            \centering
            $L(2) =$
            \begin{tikzcd}[row sep = 3mm, column sep = 3mm]
                000\ar[rr] \ar[rd] \ar[dd] & & 010 \ar[dd] \ar[rd] &\\
                & 100 \ar[dddd] \ar[rr] & & 110 \ar[dddd]\\
                001 \ar[rr] \ar[dd] & & 011 \ar[dd] &\\
                &&&\\
                002 \ar[rd] \ar[rr] & & 012 \ar[rd] &\\
                & 102 \ar[rr] & & 112,
            \end{tikzcd}
        \end{minipage}\\
        \vspace{0.5cm}
        
        \centering
        $L(3) = $
        \begin{tikzcd}[row sep = 2mm, column sep = 2mm]
	1000 &&&&&&&& 1010 &&& \\
	&&&& 0000 && 0010 \\
	&&&&& 0100 && 0110 \\
	&&& 1100 &&&&&&&& 1110 \\
	&&&& 0001 && 0011 \\
	&&&&& 0101 && 0111 \\
	&&&& 0002 && 0012 \\
	1003 &&&&&&&& 1013 \\
	&&&& 0003 && 0013 \\
	&&&&& 0103 && 0113 \\
	&&& 1103 &&&&&&&& 1113
	\arrow[from=1-1, to=1-9]
	\arrow[from=1-1, to=4-4]
	\arrow[from=1-1, to=8-1]
	\arrow[from=1-9, to=4-12]
	\arrow[from=1-9, to=8-9]
	\arrow[from=2-5, to=1-1]
	\arrow[from=2-5, to=2-7]
	\arrow[from=2-5, to=3-6]
	\arrow[from=2-5, to=5-5]
	\arrow[from=2-7, to=1-9]
	\arrow[from=2-7, to=3-8]
	\arrow[from=2-7, to=5-7]
	\arrow[from=3-6, to=3-8]
	\arrow[from=3-6, to=4-4]
	\arrow[from=3-6, to=6-6]
	\arrow[from=3-8, to=4-12]
	\arrow[from=3-8, to=6-8]
	\arrow[from=4-4, to=4-12]
	\arrow[from=4-4, to=11-4]
	\arrow[from=4-12, to=11-12]
	\arrow[from=5-5, to=5-7]
	\arrow[from=5-5, to=6-6]
	\arrow[from=5-5, to=7-5]
	\arrow[from=5-7, to=6-8]
	\arrow[from=5-7, to=7-7]
	\arrow[from=6-6, to=6-8]
	\arrow[from=6-6, to=10-6]
	\arrow[from=6-8, to=10-8]
	\arrow[from=7-5, to=7-7]
	\arrow[from=7-5, to=9-5]
	\arrow[from=7-7, to=9-7]
	\arrow[from=8-1, to=8-9]
	\arrow[from=8-1, to=11-4]
	\arrow[from=8-9, to=11-12]
	\arrow[from=9-5, to=8-1]
	\arrow[from=9-5, to=9-7]
	\arrow[from=9-5, to=10-6]
	\arrow[from=9-7, to=8-9]
	\arrow[from=9-7, to=10-8]
	\arrow[from=10-6, to=10-8]
	\arrow[from=10-6, to=11-4]
	\arrow[from=10-8, to=11-12]
	\arrow[from=11-4, to=11-12]
\end{tikzcd}
    \end{figure}\\
    Note that $I^{n+1}$ is embedded via $\lambda_n$ into $L(n)$ as the outer cube, while $L(n-1)$ is embedded via $i_n$ into the back face of this cube.
\end{ex}

\begin{lem}\label{prop:8}
    The functor
    \[
    \EuScript L_n \longrightarrow \text{Ch}^n(\EuScript C)
    \]
    induced by restriction to $I^n\times\{n\}$ is a trivial Kan fibration.
\end{lem}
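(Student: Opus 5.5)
We argue by induction on $n$, building an element of $\Le_n$ out of its restriction to $I^n\times\{n\}$ via right Kan extensions, and then identifying $\Le_n$ with the full subcategory of right Kan extended diagrams so that Lemma \ref{lem:2} (in its dual form) applies. The base case $n=0$ is trivial: $L(0)=I^0\times\{0\}$ is a point, so restriction is the identity $\Le_0\to\text{Ch}^0(\Ce)=\Ce$. For $n\geq1$, note that $L(n)$ is the union of the outer cube $\lambda_n(I^{n+1})$ and the back face $i_n(L(n-1))$. Their intersection is $i_n(\lambda_{n-1}(I^n))$, i.e. the face of the outer cube with first coordinate $0$, and $I^n\times\{n\}$ is the front face $\lambda_n(I^n\times\{1\})$.

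The key steps are as follows.

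\begin{enumerate}
\item Apply Proposition \ref{prop:4} to the outer cube. Restricting to the front face gives a trivial Kan fibration $\text{Ch}^{n+1}_{\text{ex}}(\Ce)\to\text{Ch}^n(\Ce)$. By the proof of Proposition \ref{prop:4}, the extension is a right Kan extension: zeros are placed on the degenerate back vertices, and the value at $\sigma^{(0)}$ is obtained as a limit.
\item Look at the back face of the outer cube, i.e. its restriction to first coordinate $0$. This is the restriction of $F\circ i_n$ along $\lambda_{n-1}$, which the defining condition forces to be an exact complex in $\text{Ch}^n(\Ce)$. Its own front face, $i_n(I^{n-1}\times\{n-1\})$, must lie in $\text{Ch}^{n-1}(\Ce)$.
\item Apply induction: $\Le_{n-1}\to\text{Ch}^{n-1}(\Ce)$ is a trivial fibration, so $F\circ i_n$ is determined, up to contractible choice, by its restriction to $i_n(I^{n-1}\times\{n-1\})$. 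This face lies inside the outer cube, where it has already been determined in step 1.
\end{enumerate}

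Formally, I would set $\Ce_0:=I^n\times\{n\}\subset L(n)$ and show that $F\in\Le_n$ if and only if $F|_{\Ce_0}\in\text{Ch}^n(\Ce)$ and $F$ is a right Kan extension of $F|_{\Ce_0}$ that takes value $0$ at the degenerate vertices. The cleanest way is to factor through the intermediate full subcategory $\Ce_1:=\lambda_n(I^{n+1})\subset L(n)$, so the argument proceeds in two stages.

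\begin{enumerate}
\item Going from $\Ce_0$ to the outer cube is Proposition \ref{prop:4}. Zeros at degenerate vertices are themselves right Kan extensions over empty undercategories, after first restricting to the complement of those vertices, exactly as in that proof.
\item Going from the outer cube to all of $L(n)$, the new vertices are the interior points $i_n(\sigma)$ with middle coordinate strictly between $0$ and $n$. For such a vertex $v$, the undercategory $(\Ce_1)_{v/}$ has a canonical initial object: increase the last coordinate to $n$, which lands in $\Ce_1$. A right Kan extension therefore just copies values along these maps. Arguing recursively with $L(n-1)$, one should instead extend inside the back face by the inductive trivial fibration $\Le_{n-1}\to\text{Ch}^{n-1}(\Ce)$. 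To do this, form the pullback
\[
\text{Ch}^{n+1}_{\text{ex}}(\Ce)\times_{\text{Fun}(i_n\lambda_{n-1}(I^n),\Ce)}\Le_{n-1},
\]
which is a pullback of a trivial fibration, so its projection to $\text{Ch}^{n+1}_{\text{ex}}(\Ce)$ is again a trivial fibration. Then $\Le_n$ should be identified isomorphically with this fiber product, using that $L(n)$ is the pushout of posets along the intersection (checking that every relation in $L(n)$ lies in one of the two pieces). This requires $\Le_{n-1}\to\text{Fun}(\lambda_{n-1}(I^n),\Ce)$ to land in exact complexes and to be compatible; composing the trivial fibrations then finishes the argument.
\end{enumerate}

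The main obstacle is the pushout claim. To show that $\text{Fun}(L(n),\Ce)$ is the fiber product of the functor categories on the two pieces, one needs that the nerve of $L(n)$ is the union of the nerves of the two subposets, i.e. that every chain in $L(n)$ lies in one of them. This fails naively: a chain can pass from a back-face interior vertex up to a vertex of the outer cube with first coordinate $1$. So it is only a categorical equivalence, namely an inner anodyne inclusion of the union into the nerve. I would prove this by the standard argument that a poset covered by two downward/upward closed pieces with suitable cofinality has nerve equivalent to the pushout, or alternatively bypass it by using the Kan-extension description in stage 2. In that description, the values at interior vertices are forced by the back-face data, and compatibility with edges to first-coordinate-$1$ vertices follows from the initial-object observation.
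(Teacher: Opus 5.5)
Your outline is the paper's: fill the outer cube with Proposition \ref{prop:4}, fill the back image $i_n(L(n-1))$ by induction, and glue. The formal step, however, glues along the wrong locus. One has $i_n(L(n-1))=\{\tau\in L(n)\mid \tau_1=0,\ \tau_{n+1}\geq 1\}$ and $\lambda_n(I^{n+1})=I^n\times\{0,n\}$, so the two pieces meet only in the $(n-1)$-cube $\{0\}\times I^{n-1}\times\{n\}=i_n(I^{n-1}\times\{n-1\})$. By contrast, $i_n\lambda_{n-1}(I^n)=\{0\}\times I^{n-1}\times\{1,n\}$. For $n\geq 2$ this contains vertices with last coordinate $1$, which lie outside the outer cube. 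It is also not the face $\{0\}\times I^{n-1}\times\{0,n\}$ of the outer cube; that face's restriction is not $F\circ i_n\circ\lambda_{n-1}$ and need not be exact. Hence there is no restriction map $\text{Ch}^{n+1}_{\text{ex}}(\Ce)\to\text{Fun}(i_n\lambda_{n-1}(I^n),\Ce)$, and your fiber product is undefined. Your informal step 3 had the right locus. The paper pulls back the inductive trivial fibration $\Le_{n-1}\to\text{Ch}^{n-1}(\Ce)$ itself along restriction $\text{Ch}^n(\Ce)\to\text{Ch}^{n-1}(\Ce)$ to $\{0\}\times I^{n-1}$, giving $\He_n\to\text{Ch}^n(\Ce)$ on $H(n)=I^n\times\{n\}\cup\text{im}(i_n)$. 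It then glues with $\text{Ch}^{n+1}_{\text{ex}}(\Ce)\to\text{Ch}^n(\Ce)$ along the front face.

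Your worry that $\text{N}(L(n))$ is not the union of the nerves is legitimate; the paper calls its squares pullbacks of simplicial sets without addressing it. But you place the problem in the wrong spot, and the proposed repair is false. The relations you name, from a back-interior vertex $v=(0,\tau,m)$ with $0<m<n$ to a front vertex $(1,\rho,n)$, are harmless: they factor through $(0,\tau,n)$, which is initial among front vertices above $v$. The harmful relations run from the bottom layer of the outer cube into the back interior, e.g. $(0,\dots,0,0)\leq(0,\dots,0,1)$. No vertex with last coordinate $n$ lies between these two, so in the pushout of nerves the mapping space between them is empty, while in $L(n)$ it is a point. Thus the union is not categorically equivalent to $\text{N}(L(n))$, and an object of $\Le_n$ carries extra data, namely maps from $F(0,\dots,0)$ into the back interior, which no fiber product of the pieces sees. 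You must show this data is contractible, and that is a Kan-extension statement, i.e. your first formal sentence stated correctly. Let $Z$ be the vertices with last coordinate $<n$ that are forced to vanish: the degenerate bottom vertices and, recursively, those of the back image (for $n=2$, $Z=\{100,010,110,011\}$). Then $F\in\Le_n$ if and only if the following hold.
\begin{enumerate}
\item $F\vert_{I^n\times\{n\}}\in\text{Ch}^n(\Ce)$.
\item $F\vert_{(I^n\times\{n\})\cup Z}$ is a \emph{left} Kan extension of $F\vert_{I^n\times\{n\}}$. The vertices of $Z$ have empty overcategory in the front face; a right Kan extension would instead copy front values, just as you observed for the outer cube.
\item $F$ is a right Kan extension of $F\vert_{(I^n\times\{n\})\cup Z}$.
\end{enumerate}
The remaining vertices are the $(0,\dots,0,m)$ with $0\leq m<n$. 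A cofinality check identifies the right Kan extension condition at each of them with Cartesianness of the corresponding cube; for $n=2$, this is the outer cube at $000$ and the $\Le_1$-square at $001$. Lemma \ref{lem:2} and its dual then give the trivial fibration.
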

\begin{proof}
    We prove this by induction. For $n = 0$, the claim is trivial. For $n\geq 1$, define $H(n) = I^n\times\{n\} \cup \text{im}(i_n)$ and let $\He_n$ be the image of the restriction functor
    \[
    \Le_n \longrightarrow \text{Fun}(H(n),\Ce).
    \] Notice that we have a diagram
    \[
    \begin{tikzcd}
        \Le_n \ar[r] \ar[d] & \text{Ch}^{n+1}_\text{ex}(\Ce) \ar[d, "-\vert_{I^n\times \{1\}}"]\\
        \He_n \ar[r] \ar[d] & \text{Ch}^n(\Ce) \ar[d, "-\vert_{\{0\}\times I^{n-1}}"]\\
        \Le_{n-1} \ar[r] & \text{Ch}^{n-1}(\Ce)
    \end{tikzcd}
    \]
    were both squares are pullback squares of simplicial sets. Since the map
    \[
    \Le_{n-1} \longrightarrow \text{Ch}^{n-1}(\Ce)
    \]
    is a trivial Kan fibration by the induction hypothesis, and the map
    \[
    \text{Ch}^{n+1}_\text{ex}(\Ce) \longrightarrow \text{Ch}^n(\Ce)
    \]
    is a trivial Kan fibration by Proposition \ref{prop:4}, it follows that the diagonal of the upper square, given by the restriction functor
    \[
    \Le_n \longrightarrow \text{Ch}^{n}(\Ce)
    \]
    is a trivial Kan fibration. \qedhere
\end{proof}

\begin{lem}\label{prop:9}
    The functor
    \[
    \Le_n \longrightarrow \text{Fun}([n], \EuScript C)
    \]
    induced by precomposition with the injection
    \[
    l_n : [n] \longrightarrow L(n), i \longmapsto (0,\dots,0,i)
    \]
    is a trivial Kan fibration.
\end{lem}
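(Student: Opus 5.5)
We argue by induction on $n$, mirroring the proof of Lemma \ref{prop:8}. The image of $l_n$ is the column $\{0\}^n\times[n]$. For $n=0$ the map $l_0$ is the identity of $L(0)=\{0\}$, so there is nothing to prove. For $n\geq 1$, we factor $l_n$ through the embedding $i_n$. For $i\geq 1$ we have $l_n(i) = i_n(l_{n-1}(i-1))$, since $i_n(0,\dots,0,i-1)=(0,0,\dots,0,i)$. The remaining vertex $l_n(0)=(0,\dots,0)$ equals $\lambda_n(\sigma^{(0)})$, the initial vertex of the outer cube.

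Step 1: set $M(n) := \text{im}(i_n)\cup\{(0,\dots,0)\}\subset L(n)$. We claim that restriction from $\Le_n$ to the image $\mathcal M_n \subset \text{Fun}(M(n),\Ce)$ is a trivial Kan fibration, and that $\mathcal M_n$ is the pullback of $\Le_{n-1}$ along restriction. We show that an object of $\Le_n$ is a right Kan extension of its restriction to $M(n)$, and apply the dual of Lemma \ref{lem:relative} / Lemma \ref{lem:2}, possibly in two stages.

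Concretely, the vertices of $L(n)$ outside $M(n)$ are the vertices $\lambda_n(\sigma)$ with $\sigma\in I^n\times\{1\}$ and $\sigma_1=1$, together with the vertices $(\tau,0)$ with $\tau\neq 0$. We treat them in two stages:
\begin{itemize}
\item First, right Kan extend the values on the front face $I^n\times\{n\}$ with first coordinate $1$. These vertices lie in the complex $F\circ\lambda_n$. Among them, those that are degenerate in $I^{n+1}$ must be $0$, which is a right Kan extension from the empty diagram over the relevant undercategory. The nondegenerate one, $\sigma^{(n+1)}$ hmm, actually all of $\{1\}\times I^{n-1}\times\{1\}$ except possibly $(1,\dots,1)$ are degenerate.
\item Second, the vertices $(\tau,0)$ with $\tau\neq 0$ of the back face $I^n\times\{0\}$ are degenerate for $\lambda_n$ and hence forced to be zero.
\end{itemize}
Using the identification from Proposition \ref{prop:totfib}, namely that $C_0$ is the total fiber of the back face, exactness of $F\circ\lambda_n$ shows that the vertex $(0,\dots,0)$ is determined as a limit of the values on $I^n\times\{n\}$. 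One may instead prefer to keep $(0,\dots,0)$ and Kan-extend the others. I would rather reorganize as follows.

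Step 2 (cleaner route): use the two trivial fibrations already available. By Lemma \ref{prop:8}, $\Le_n\to\text{Ch}^n(\Ce)$ and $\Le_{n-1}\to\text{Ch}^{n-1}(\Ce)$ are trivial Kan fibrations. In the pullback diagram from the proof of Lemma \ref{prop:8}, the composite $\Le_n\to\He_n\to\Le_{n-1}$ is a pullback of $\text{Ch}^{n+1}_{\text{ex}}(\Ce)\to\text{Ch}^{n}(\Ce)\to\text{Ch}^{n-1}(\Ce)$, where the second map restricts to $\{0\}\times I^{n-1}$. Restriction $\text{Ch}^n(\Ce)\to\text{Ch}^{n-1}(\Ce)$ to a face, with the single extra vertex $C_0$ adjoined, corresponds after Kan extension to the fibration $\text{Fun}(\Delta^1,\Ce)\to\Ce$ evaluating at the target. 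Combining this with the inductive hypothesis $\Le_{n-1}\to\text{Fun}([n-1],\Ce)$, we obtain that $\Le_n\to\text{Fun}([n],\Ce)\cong\text{Fun}(\Delta^1,\Ce)\times_\Ce\text{Fun}([n-1],\Ce)$ is a trivial Kan fibration, by composing pullbacks of trivial fibrations and using that $\text{Fun}([n],\Ce)\to\text{Fun}(\{0<1\},\Ce)\times_\Ce\text{Fun}(\{1<\dots<n\},\Ce)$ is a trivial fibration (inner anodyne spine inclusion).

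Main obstacle: the key check is that, in $\Le_n$, the edge $l_n(0)\to l_n(1)$ together with the value at $l_n(1)$ freely determines everything not already in $\text{im}(i_n)$. Equivalently, the restriction $\text{Ch}^{n+1}_{\text{ex}}(\Ce)\to\text{Fun}(\{(0,\dots,0)\to(0,\dots,0,1)\},\Ce)\times_{\Ce}\text{Ch}^n(\Ce)$ must be a trivial fibration, i.e. the cube is obtained by Kan extension (zeros, then a fiber) from this data. I would verify this by applying Lemma \ref{lem:2} in stages as in Proposition \ref{prop:4}, checking the relevant undercategory cofinality by hand.
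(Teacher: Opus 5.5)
There is a genuine gap. Your overall frame matches the paper: induction, the subposet $M(n)=\text{im}(i_n)\cup\{(0,\dots,0)\}$, and a reduction via the spine of $[n]$ to the induction hypothesis. But the only nontrivial step is the one you label the ``main obstacle'', namely that restriction $\Le_n\to\text{Fun}(\Delta^1,\Ce)\times_{\Ce}\Le_{n-1}$ (along $l_n(0)\to l_n(1)$ and $i_n$) is a trivial fibration. You never prove it, and the routes you sketch toward it point the wrong way.

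In Step 2, the diagram of Lemma \ref{prop:8} determines $\Le_n$ from its restriction to $H(n)=I^n\times\{n\}\cup\text{im}(i_n)$, which does not contain $l_n(0)$. In that diagram the origin is \emph{forced} to be the total fiber of the front face (Proposition \ref{prop:4}), which is the opposite of the freeness you need. Also, restricting $\text{Ch}^n(\Ce)\to\text{Ch}^{n-1}(\Ce)$ to $\{0\}\times I^{n-1}$ discards $C_n$, not $C_0$.

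Your ``equivalent'' reformulation is false. For $n\geq 2$ the vertex $l_n(1)=(0,\dots,0,1)$ is not in $\text{im}(\lambda_n)=I^n\times\{0,n\}$. If instead you mean the edge $\sigma^{(0)}\to\sigma^{(1)}$ of $I^{n+1}$ glued to the front face, then Proposition \ref{prop:totfib} shows that edge is determined by the face. So $\text{Ch}^{n+1}_{\text{ex}}(\Ce)\to\text{Fun}(\Delta^1,\Ce)\times_{\Ce}\text{Ch}^n(\Ce)$ is not even essentially surjective.

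Step 1 has the same defect. Right Kan extending the vertices with $\sigma_1=1$ from their empty undercategories in $M(n)$ sets $\lambda_n(1,\dots,1)$ to $0$, whereas it must be the total cofiber.

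The missing idea is to use the \emph{other} face of the outer cube, via the dual of Proposition \ref{prop:4}. An exact $(n+1)$-cube is freely determined by its face $\{0\}\times I^n$: right Kan extend zeros to the vertices with first coordinate $1$ other than the top, then left Kan extend the top vertex as a total cofiber. This makes restriction $\Le_n\to\Ke_n$ a trivial fibration, where $K(n)=\{0\}\times I^{n-1}\times\{0,n\}\cup\text{im}(i_n)$.

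The vertices of $K(n)\setminus M(n)$ are the back-face vertices $(0,\tau,0)$ with $\tau\neq 0$. The undercategory of such a vertex in $M(n)$ has initial object $i_n(\tau,0)$, which is a zero vertex, so $\Ke_n\to\Me_n$ is a trivial fibration by (the dual of) Lemma \ref{lem:2}.

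Finally, the origin is a cone point over $\text{im}(i_n)\cong L(n-1)$, whose initial object is $l_n(1)$. A join argument then shows that the edge $l_n(0)\to l_n(1)$ together with $N(\text{im}\, i_n)$ is an inner anodyne subcomplex of $N(M(n))$. Hence $\Me_n\to\text{Fun}(\Delta^1,\Ce)\times_{\Ce}\Le_{n-1}$ is a trivial fibration, and your spine argument plus the induction hypothesis finish the proof. This is exactly the paper's factorization $\Le_n\to\Ke_n\to\Me_n\to\text{Fun}([n],\Ce)$.
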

\begin{proof}
    As previously, we prove this by induction. The claim for $n=0$ is trivial. For $n \geq 1$, we define $K(n) = \{0\}\times I^{n-1} \times \{0,n\} \cup \text{im}(i_{n})$ and $M(n) = \text{im}(l_{n})\cup \text{im}(i_{n})$. We let $\Ke_n$ and $\Me_n$ denote the images of the functors
    \[
        \Le_n \longrightarrow \text{Fun}(K(n),\Ce)\\
    \]
    and
    \[
        \Le_n \longrightarrow \text{Fun}(M(n),\Ce)
    \]
    respectively. Then, there is a diagram
    \[
    \begin{tikzcd}
        \Le_n \ar[r] \ar[d] & \text{Ch}^{n+1}_\text{ex}(\Ce) \ar[d, "{-\vert_{\{0\} \times I^{n}}}"]\\
        \Ke_n \ar[r] \ar[d] & \text{Ch}^n(\Ce)\\
        \Me_n \ar[r] \ar[d] & \Le_{n-1} \ar[d]\\
        \text{Fun}([n],\Ce) \ar[r] & \text{Fun}([n-1],\Ce)
    \end{tikzcd}
    \]
    where the upper and lower square are pullback squares of simplicial sets and the left column composes to the functor induced by precomposition with $l_n$. Applying the dual of Proposition \ref{prop:4} and the induction hypothesis, it follows that the maps
    \[
    \Le_n \longrightarrow \Ke_n
    \]
    and \[
    \Me_n \longrightarrow \text{Fun}([n],\Ce)
    \]
    are trivial Kan fibrations. It remains to show that the map
    \[
    \Ke_n \longrightarrow \Me_n
    \]
    is a trivial Kan fibration. This is achieved by extending by zeros, applying Lemma \ref{lem:2} twice. We leave the details to the reader. \qedhere
\end{proof}

\begin{proof}[Proof of Theorem \ref{thm:1}]
    Define $\Theta$ as the composition
    \[
    S_n(\EuScript C) \overset{v}\longrightarrow \text{Fun}([n-1],\EuScript C) \overset{s}\longrightarrow \Le_{n-1} \overset{r}\longrightarrow \text{Ch}^{n-1}(\EuScript C)
    \]
    where $s$ is a section of the trivial Kan fibration
    \[
    \Le_{n-1} \longrightarrow \text{Fun}([n-1],\Ce)
    \]
    and $r$ is the functor induced by restriction to $I^{n-1} \times \{n-1\}$. As a composite of equivalences, $\Theta$ defines an equivalence of $\infty$-categories.\\

    We prove (1) and (2) by induction. For $n = 1$, we have
    \[
    \Theta(X)_0 = F(0) = F(l_0(0)) = v(X)(0) = X_{0,0},
    \]
    showing both (1) and (2).\\
    Let $n \geq 2$ and $X \in S_n(\EuScript C)$. Consider the diagram $F:=s(v(X)) \in \Le_{n-1}$. Applying Proposition \ref{prop:totfib} and the fact that $F \circ \lambda_{n-1}$ is exact, we deduce that $F(0,\dots,0) = X_{0,n}$ is the total fiber of the complex $r(F) = \Theta(X)$, thus proving (1) for $i= 0$.
    Note that we have a diagram of $\infty$-categories
    \[
    \begin{tikzcd}
        S_n(\Ce) \ar[r] \ar[d] & \text{Fun}([n-1],\Ce) \ar[r] \ar[d] & \Le_{n-1} \ar[r] \ar[d] & \text{Ch}^{n-1}(\Ce) \ar[d]\\
        S_{n-1}(\Ce) \ar[r] & \text{Fun}([n-2],\Ce) \ar[r] & \Le_{n-2} \ar[r] & \text{Ch}^{n-2}(\Ce)
    \end{tikzcd}
    \]
    where the map
    \[
    S_n(\Ce) \longrightarrow S_{n-1}(\Ce)
    \]
    is induced by the inclusion
    \[
    j_n : J(n-1) \longrightarrow J(n), \quad (i,j) \longmapsto (i+1,j+1).
    \]
    (In fact, the outer squares are strictly commutative while the middle square may not be.) The rows compose to the map $\Theta$ in the respective dimensions and we abuse notation by denoting them with the same symbol.
    For $1 \leq i \leq n-1$, using the induction hypothesis, we have
    \[
    \text{TotFib}(\Theta(X)\vert_{\{0\}^i\times I^{n-i-1}}) = \text{TotFib}(\Theta(X\circ j_n)\vert_{\{0\}^{i-1}\times I^{n-i-1} }) = (X\circ j_n)_{i-1,n-1} = X_{i,n},
    \]
    which proves (1).
    Moreover, for $0 \leq i \leq n-2$, we have
    \[
    \Theta(X)_i = (\Theta(X)\vert_{\{0\}\times I^{n-2}})_i \simeq (X\circ j_n)_{n-i-2,n-i-1}[i] = X_{n-i-1,n-i}[i].
    \]
    It remains to show condition (2) for $i = n-1$. Since $F \circ \lambda_{n-1}$ is exact, we have
    \begin{align}
    \Theta(X)_{n-1} &= \text{TotCof}(F\circ \lambda_{n-1} \vert_{\{0\}\times I^{n-1}}) \notag\\
    &\simeq \text{TotFib}(F\circ \lambda_{n-1} \vert_{\{0\}\times I^{n-1}})[n-1] \tag{$*$}
    \end{align}
    by Proposition \ref{prop:totfib-totcof}. Consider the map
    \[
    p : I^{n} \longrightarrow \Le_{n-1}, \quad
    \sigma \longmapsto \begin{cases}
        \lambda_{n-1}(0,\sigma_1,\dots,\sigma_{n-1}) &\text{for }\sigma_n = 0\\
        i_{n-1}\circ \lambda_{n-2}(\sigma_1,\dots,\sigma_{n-1}) &\text{else.}
    \end{cases}
    \]
    Then, $F \circ p$ is an object of $\text{Fun}(I^n,\Ce)$ corresponding to a morphism
    \[
    (F\circ p)\vert_{I^{n-1}\times\{0\}} = (F\circ \lambda_{n-1})\vert_{\{0\}\times I^{n-1}} \longrightarrow (F\circ p)\vert_{I^{n-1} \times \{1\}} = F\circ i_{n-1} \circ \lambda_{n-2}.
    \]
    Its fiber $\text{Fib}(F\circ p)$ admits a canonical map
    \[
    \text{Fib}(F\circ p) \longrightarrow (F\circ \lambda_{n-1})\vert_{\{0\}\times I^{n-1}}
    \]
    corresponding to a functor $G \in \text{Fun}(I^n, \Ce)$. In fact, $G \in \text{Ch}^n(\Ce)$. Note that $\text{Cof}(G) = F \circ i_{n-1} \circ \lambda_{n-2}$ is exact by definition of $\Le_{n-1}$, so that by $\ref{lem:1}$, $G$ is exact. Applying Proposition \ref{prop:totfib}, there is an equivalence
    \begin{align*}
    \text{TotFib}((F\circ \lambda_{n-1})\vert_{\{0\}\times I^{n-1}}) &= \text{TotFib}(G\vert_{I^{n-1}\times\{1\}})\\
    &\simeq G_0\\
    &= \text{Fib}\{X_{0,n} \to X_{1,n}\}\\
    &= X_{0,1}.
    \end{align*}
    Combining this with the equivalence $(*)$, we obtain the desired result. \qedhere
\end{proof}

\newpage

\subsection{Semiorthogonal decompositions of stable $\infty$-categories}\label{subsec:sod}
In the following, we present the notion of semiorthogonal decompositions of length $n$. This is an extension of the framework of semiorthogonal decompositions described in \cite{dyck_spherical}, and the structure of our discussion will closely follow the one found there.

\paragraph{Orthogonal tuples of subcategories and the span of a subcategory.} Let $\EuScript C$ be a stable $\infty$-category and $\EuScript A \subset \EuScript C$ a subcategory. We denote by $\EuScript A^\text{sf} \subset \EuScript C$ the full subcategory spanned by objects $c \in \EuScript C$ such that there is an object $a \in \EuScript A$ and an equivalence $c \simeq a$. The subcategory $\EuScript A^\text{sf}$ is called the \textit{strictly full closure} of $\EuScript A$. If $\EuScript A = \EuScript A^\text{sf}$, we say that $\EuScript A$ is \textit{strictly full}. A full subcategory $\EuScript A \subset \EuScript C$ will be called \textit{stable} if it has a zero object and is closed under fibers and cofibers. In particular, $\EuScript A$ then defines a stable $\infty$-category.\\

We say that stable subcategories $\EuScript A \subset \EuScript C$ and $\EuScript B \subset \EuScript C$ are \textit{orthogonal}, if for all objects $a \in \EuScript A, b \in \EuScript B$, the space $\text{Map}_{\EuScript C}(b, a)$ is contractible.
In particular, $\Ae$ and $\Be$ being orthogonal is equivalent to the assertion that $\text{h}\EuScript A$ and $\text{h}\EuScript B$ are orthogonal as full triangulated subcategories of $\text{h}\EuScript C$, i.e., we have $\text{Hom}_{\text{h}\EuScript C}(b,a) = 0$ for all $a \in \text{h}\EuScript A, b \in \text{h}\EuScript B$. Furthermore, if $\EuScript A$ and $\EuScript B$ are orthogonal, then the intersection $\EuScript A \cap \EuScript B$ consists only of zero objects. We say that a tuple $(\EuScript A_0, \dots, \EuScript A_n)$ of stable subcategories of $\EuScript C$ is \textit{orthogonal} if for all $i < j$, $\EuScript A_i$ and $\EuScript A_j$ are orthogonal.

Given a stable subcategory $\EuScript A \subset \EuScript C$, we define its \textit{left} and \textit{right orthogonals} to be the full subcategories $^\perp\EuScript A, \EuScript A^\perp \subset \EuScript C$ whose objects are specified by:
\begin{gather*}
    ^\perp\EuScript A = \{c \in \EuScript C \mid \text{Map}_\EuScript C(c,a) \text{ is contractible, }\:\:\forall \:a \in A \},\\
    \EuScript A^\perp = \{c \in \EuScript C \mid \text{Map}_\EuScript C(a,c) \text{ is contractible, }\:\:\forall \:a \in A \}.
\end{gather*}

For a full subcategory $\EuScript A \subset \EuScript C$, we define its \textit{stable closure} to be the smallest strictly full stable subcategory $\overline \Ae \subset \EuScript C$ containing $\EuScript A$. Then, $\text{h}\overline{\EuScript A}$ is the smallest strictly full triangulated subcategory of $\text{h}\EuScript C$ containing $\text{h}\EuScript A$, denoted by $\overline{\text{h}\Ae}$.

Given stable subcategories $\EuScript A_0, \dots , \EuScript A_n$ of $\EuScript C$, we define their \textit{sum} via
\[
\EuScript A_0 + \dots + \EuScript A_n := \overline{\EuScript A_0 \cup \dots \cup \EuScript A_n}
\]
and analogously for triangulated subcategories of a triangulated category.

\paragraph{The $\infty$-categories $\langle \EuScript A_0, \dots, \EuScript A_n \rangle$ and $\{\EuScript A_0, \dots, \EuScript A_n\}$.} Let $\EuScript C$ be a stable $\infty$-category and let $\EuScript A_0, \dots , \EuScript A_n$ be stable subcategories of $\EuScript C$. We denote by
\[
\langle \EuScript A_0, \dots, \EuScript A_n \rangle \subset S_{n+1}(\EuScript C)
\]
the full subcategory spanned by Waldhausen diagrams $X$ such that $a_i :=X_{n-i,n-i+1} \in \EuScript A_i$. Hence, an object of this $\infty$-category is a diagram of the form:

\begin{equation}
\begin{tikzcd}[row sep=large]
    a_n \arrow[r] \arrow[d] &[10pt] X_{02} \arrow[r] \arrow[d] &[6pt] X_{03} \arrow[r] \arrow[d] &[-5pt] \cdots \arrow[r] \arrow[d] &[-12pt] X_{0,n+1} \arrow[d]\\
    0 \arrow[r] & a_{n-1} \arrow[r] \arrow[d] & X_{13} \arrow[r] \arrow[d] & \ddots \arrow[r] \arrow[d] & \vdots \arrow[d] \\[1pt]
    & 0 \arrow[r] & \ddots \arrow[r]\arrow[d] & X_{n-2,n} \arrow[r] \arrow[d] & X_{n-2,n+1} \arrow[d]\\[9pt]
    & & 0 \arrow[r] & a_1 \arrow[r] \arrow[d] &X_{n-1,n+1} \arrow[d]\\[9pt]
    &&& 0 \arrow[r] & a_0
\end{tikzcd}\label{eq:2}
\end{equation}\\

\noindent
We have the $\infty$-functor $\xi : \langle \EuScript A_0, \dots, \EuScript A_n \rangle \rightarrow \EuScript C$ given by the projection to the vertex $X_{0,n+1}$. Note that, since fibers and cofibers can be computed pointwise, $\langle \EuScript A_0, \dots, \EuScript A_n \rangle$ is stable and $\xi$ is an exact functor. Further, we denote by
\[
\{\EuScript A_0, \dots, \EuScript A_n\} \subset \text{Ch}^{n}(\EuScript C)
\]
the full subcategory spanned by coherent complexes $C$ such that $a_i := C_i \in \EuScript A_i$. The total fiber restricts to an $\infty$-functor 
\[\text{TotFib} : \{\EuScript A_0, \dots, \EuScript A_n\} \longrightarrow \EuScript C.\]
As above, $\{\EuScript A_0, \dots, \EuScript A_n\}$ is stable and $\text{TotFib}$ is exact since it is a limit functor.

\begin{cor}\label{prop:3}
    \begin{itemize}
        \item[(a)] The $\infty$-categories $\langle \EuScript A_0, \dots, \EuScript A_n\rangle$ and $\{\EuScript A_0, \dots, \EuScript A_n\}$ are equivalent.
        \item[(b)] The equivalence can be chosen as to send the $\infty$-functor $\xi$ to the $\infty$-functor $\text{TotFib}$.
    \end{itemize}
\end{cor}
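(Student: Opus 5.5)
The plan is to apply Proposition~\ref{thm:1} with $n+1$ in place of $n$ and show that the equivalence
\[
\Theta : S_{n+1}(\Ce) \longrightarrow \text{Ch}^{n}(\Ce)
\]
restricts to the full subcategories $\langle \Ae_0,\dots,\Ae_n\rangle$ and $\{\Ae_0,\dots,\Ae_n\}$. By part (2) of Proposition~\ref{thm:1}, for $X \in S_{n+1}(\Ce)$ and $0 \le i \le n$ we have
\[
\Theta(X)_i \simeq X_{n-i,n-i+1}[i] = a_i[i].
\]

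First I will check that each stable subcategory $\Ae_i$ is strictly full and closed under shifts in both directions.
\begin{itemize}
\item If $c \simeq a$ with $a \in \Ae_i$, then $c$ is a cofiber of the map $0 \to a$, so $c \in \Ae_i$ because $\Ae_i$ is closed under cofibers.
\item For $a \in \Ae_i$, the shift $a[1] = \text{Cof}(a \to 0)$ lies in $\Ae_i$, and $a[-1] = \text{Fib}(0 \to a)$ lies in $\Ae_i$. The zero object is in $\Ae_i$ by assumption.
\end{itemize}
It follows that $\Theta(X)_i \in \Ae_i$ if and only if $X_{n-i,n-i+1} \in \Ae_i$. Hence $\Theta$ maps $\langle \Ae_0,\dots,\Ae_n\rangle$ into $\{\Ae_0,\dots,\Ae_n\}$, and an object $X$ lies in $\langle \Ae_0,\dots,\Ae_n\rangle$ exactly when $\Theta(X)$ lies in $\{\Ae_0,\dots,\Ae_n\}$.

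Next I restrict the equivalence. An equivalence of $\infty$-categories is fully faithful, so its restriction to full subcategories is fully faithful. For essential surjectivity, take $C \in \{\Ae_0,\dots,\Ae_n\}$ and choose $X \in S_{n+1}(\Ce)$ with $\Theta(X) \simeq C$. The components satisfy $\Theta(X)_i \simeq C_i \in \Ae_i$, and strict fullness gives $\Theta(X)_i \in \Ae_i$. By the preceding paragraph $X \in \langle \Ae_0,\dots,\Ae_n\rangle$. This proves (a).

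For (b), the task is a natural equivalence $\text{TotFib}\circ\Theta \simeq \xi$, not just an objectwise one as in part (1) of Proposition~\ref{thm:1} with $i=0$. I expect this to be the main point requiring care. Recall that $\Theta = r \circ s \circ v$, and set $F = s(v(X)) \in \Le_n$.
\begin{itemize}
\item Evaluation at the cube vertex $(0,\dots,0)$ is a functor on $\Le_n$. Composed with $s\circ v$, it is strictly equal to $\xi$, since $l_n(0) = (0,\dots,0)$ and $s$ is a section of restriction along $l_n$.
\item Since $F\circ\lambda_n$ is an exact complex, Remark~\ref{rem:exact-limit} makes it a limit cube. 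The canonical map from the initial vertex to the limit over the remaining vertices is therefore an equivalence, and it is natural in $F$.
\item Proposition~\ref{prop:totfib} identifies this limit with $\text{TotFib}(r(F))$, since the complex $F\circ\lambda_n$ restricts to $r(F)$ on $I^n\times\{1\}$.
\end{itemize}
This gives a natural map $\xi \Rightarrow \text{TotFib}\circ\Theta$ that is an equivalence at each object, hence an equivalence of functors.

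I expect the main obstacle to be checking that the identification in Proposition~\ref{prop:totfib} is functorial in the complex. Its proof compares fibers of canonical maps that are themselves functorial limit and cofiber constructions, so I would argue that this functoriality transfers directly. If this proves awkward, the fallback is to modify $\Theta$ within its equivalence class, using that $\Le_n \to \text{Ch}^{n+1}_{\text{ex}}(\Ce)$ is a trivial fibration, so that the identification holds on the nose.
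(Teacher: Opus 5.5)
Your proposal is correct and follows the paper's route, which applies Proposition~\ref{thm:1} (for $S_{n+1}$) together with the fact that $a_i\in\Ae_i$ if and only if $a_i[i]\in\Ae_i$; you additionally spell out the strict fullness that the restriction and essential-surjectivity step needs. Your treatment of (b) is the $i=0$ case of part (1) of Proposition~\ref{thm:1}, made natural in $X$. The functoriality you are worried about causes no problem: for a complex vanishing on the degenerate vertices, the limit over $I^{n+1}\setminus\{0\}$ is functorially the total fiber of the top face.
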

\begin{proof}
    Using the fact that $a_i \in \EuScript A_i$ holds if and only if $a_i[i] \in \EuScript A_i$, (a) and (b) follow directly from Proposition \ref{thm:1}.
\end{proof}

\begin{prop}\label{prop:5} Let $\EuScript C$ be a stable $\infty$-category and $\EuScript A_0, \dots, \EuScript A_n$ full stable subcategories of $\EuScript C$.
    \begin{itemize}
        \item[(a)] The $\infty$-functor in \ref{prop:3} (b) is fully faithful if and only if $(\EuScript A_0, \dots, \EuScript A_n)$ is orthogonal.
        \item[(b)] The $\infty$-functor in \ref{prop:3} (b) is essentially surjective if and only if $\EuScript A_0+\dots+ \EuScript A_n = \EuScript C$.
    \end{itemize}
\end{prop}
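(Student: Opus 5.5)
By Corollary \ref{prop:3}, it suffices to work with $\xi : \langle \Ae_0,\dots,\Ae_n\rangle \to \Ce$, projection to $X_{0,n+1}$. I would argue by induction on $n$, using the restriction $\langle \Ae_0,\dots,\Ae_n\rangle \to \langle \Ae_0,\dots,\Ae_{n-1}\rangle$ along $j_{n+1}$, i.e. forgetting the top row. For $n=0$ the functor is the inclusion $\Ae_0 \subset \Ce$. This is always fully faithful, the empty tuple condition being vacuous. It is essentially surjective iff $\Ae_0 = \Ce$, using that $\Ae_0$ is stable and we take strictly full closures. For the inductive step, Proposition \ref{prop:1} identifies $S_{n+1}(\Ce)$ with cofiber sequences $a_n \to X_{0,n+1} \to X_{1,n+1}$ together with a Waldhausen diagram of length $n$ ending in $X_{1,n+1}$.

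\textbf{Part (a).} First, suppose the tuple is orthogonal. Take $X, Y$. The mapping space $\text{Map}(X,Y)$ in the diagram category is computed as a pullback. Using the fiber sequence $a_n \to X_{0,n+1} \to X_{1,n+1}$, I would show two things.
\begin{enumerate}
\item $\text{Map}_\Ce(a_n, Y_{1,n+1})$ is contractible. This holds since $Y_{1,n+1}$ lies in the stable closure $\Ae_0+\dots+\Ae_{n-1}$, built by iterated extensions from objects of $\Ae_0,\dots,\Ae_{n-1}$. Orthogonality gives vanishing against each piece, and vanishing is preserved under fibers and cofibers via long exact sequences of $\pi_0$ (with shifts).
\item Restricting along the top row gives $\text{Map}(X,Y) \simeq \text{Map}(X_{0,n+1},Y_{0,n+1}) \times_{\text{Map}(a_n,Y_{0,n+1})} \dots$.
\end{enumerate}
Cleaner is the following: by Proposition \ref{prop:1}, a diagram is determined by its last column, so $\text{Map}(X,Y)$ is the mapping space of $\text{Fun}([n],\Ce)$ between the last columns. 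The key lemma is then that for objects of $\langle\cdots\rangle$, the evaluation $\text{Map}_{\text{Fun}([n],\Ce)}(v X, vY) \to \text{Map}_\Ce(X_{0,n+1},Y_{0,n+1})$ is an equivalence. Inductively, this reduces to showing the following. Given the fiber sequences $a_n\to X_{0,n+1}\to X_{1,n+1}$ and $b_n \to Y_{0,n+1}\to Y_{1,n+1}$, every map $X_{0,n+1}\to Y_{0,n+1}$ extends uniquely to the whole fiber sequence. This in turn follows from $\text{Map}(a_n, Y_{1,n+1})\simeq *$, since the map of fiber sequences is determined by the composite $X_{0,n+1}\to Y_{1,n+1}$ factoring uniquely through $X_{1,n+1}$. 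Formally, $\text{Map}(X_{1,n+1},Y_{1,n+1}) \to \text{Map}(X_{0,n+1},Y_{1,n+1})$ is an equivalence, its fiber being $\text{Map}(\Sigma a_n, Y_{1,n+1})$.

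Conversely, assume $\xi$ is fully faithful and take $i<j$, $a\in\Ae_i$, $b\in\Ae_j$. Consider the diagram $X$ with only $a_j = b$ nonzero and the diagram $Y$ with only $a_i = a$ nonzero. These are obtained by extending by zero, and they satisfy $X_{0,n+1} = b$ and $Y_{0,n+1}=a$. Morphisms $X\to Y$ in $S_{n+1}$ must vanish: at each vertex one of source or target is zero, and a zero-object-valued vertex contributes contractible choices. More concretely, $\text{Map}(X,Y)$ is a limit over vertices that are all contractible. So $\text{Map}_\Ce(b,a)\simeq *$.

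\textbf{Part (b).} The essential image of $\xi$ consists of objects admitting a filtration $0=X_{n+1,n+1}\leftarrow\cdots$ with graded pieces $a_i\in\Ae_i$. This image lies in $\Ae_0+\dots+\Ae_n$, since each $X_{k,n+1}$ is an iterated extension. Hence surjectivity forces equality of the sum with $\Ce$. For the converse, I expect the main obstacle: showing that the essential image is stable, i.e. closed under (co)fibers, so that it contains the sum. The image contains each $\Ae_i$, by extending by zero as above. Closure under cofibers needs a way to lift a morphism $\xi X\to\xi Y$ to $\langle\cdots\rangle$. Without orthogonality this is not automatic. So I would verify whether the statement implicitly uses orthogonality (as in the triangulated analogue), and in that case use part (a) to lift morphisms and take cofibers pointwise, using that $\xi$ is exact. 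Otherwise I would try to assemble filtrations of cofibers by an octahedral-type argument in the homotopy category.
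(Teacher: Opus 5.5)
\textbf{Fully faithful $\Rightarrow$ orthogonal.} Your test objects are the right ones; they are the paper's $D^j_b$ and $D^i_a$. However, your reason for $\text{Map}(X,Y)\simeq *$ is wrong. $X$ is not concentrated at one vertex: it equals $b$ on the whole block $k\le n-j$, $l\ge n-j+1$. Likewise $Y$ equals $a$ on $k\le n-i$, $l\ge n-i+1$. These blocks overlap, for instance at $(0,n+1)$, where you yourself record $X_{0,n+1}=b$ and $Y_{0,n+1}=a$. So ``at each vertex one of source or target is zero'' is false. The term $\text{Map}_{\Ce}(b,a)$ that survives is exactly what you are trying to show is contractible, so the argument is circular. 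The principle itself is also false: in $\text{Fun}(\Delta^1,\Ce)$ one has $\text{Map}((x\to 0),(0\to y))\simeq\text{Map}_{\Ce}(\Sigma x,y)$. What actually kills the map is naturality along the edge $(0,n-i)\to(0,n+1)$. Since $n-i\ge n-j+1$, $X$ restricts there to $\mathrm{id}_b$ while $Y$ restricts to $0\to a$. Hence $\xi_*$ factors through $\text{Map}_{\text{Fun}(\Delta^1,\Ce)}(\mathrm{id}_b,\,0\to a)\simeq\text{Map}_{\Ce}(b,0)\simeq *$. Since $\xi_*$ is an equivalence, $\text{Map}_{\Ce}(b,a)\simeq *$. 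This is the paper's argument.

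\textbf{Part (b), converse.} Your hesitation is justified, and it is not a technicality: without orthogonality, the implication ``$\Ae_0+\dots+\Ae_n=\Ce\Rightarrow\xi$ essentially surjective'' is false. The paper's proof asserts that the essential image of $\xi$ is closed under cofibers ``because $\xi$ is exact''. That is precisely the step you flagged, since it requires lifting a morphism $\xi X\to\xi Y$, i.e.\ some fullness. Here is a counterexample. Let $\Ae\neq 0$ be stable and $\Ce=\text{Fun}([1],\Ae)$. Let $\Ae_0,\Ae_1\subset\Ce$ be the arrows of the form $x\to 0$, resp.\ $0\to y$; this is the SOD of the paper's example for $n=1$. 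Take the reversed tuple $(\Ae_1,\Ae_0)$. Its sum is still $\Ce$. But an object of $\langle\Ae_1,\Ae_0\rangle$ is a cofiber sequence $(x\to 0)\to z\to(0\to y)$ whose boundary map lies in $\text{Map}_{\Ce}((0\to y),(\Sigma x\to 0))\simeq *$. So $z\simeq(x\xrightarrow{0}y)$, and $\mathrm{id}_x$ for $x\not\simeq 0$ is never in the image. Your octahedral fallback therefore cannot succeed. The correct statement adds the hypothesis that the tuple is orthogonal. Under that hypothesis, your first option is a complete proof: lift $f$ via (a), take the cofiber in $\langle\Ae_0,\dots,\Ae_n\rangle$, and use exactness of $\xi$. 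That version is all that Corollary \ref{prop:6} needs.

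\textbf{Orthogonal $\Rightarrow$ fully faithful.} Your argument is correct and takes a different route from the paper. You pass through the trivial fibration $v$ of Proposition \ref{prop:1} to $\text{Fun}([n],\Ce)$. You then split off the $\Ae_n$-end $a_n=X_{0,1}$ of the filtration using $\text{Map}_{\Ce}(\Sigma^k a_n,Y_{1,n+1})\simeq *$, and induct down the last column. The paper instead works with coherent complexes. It writes $\text{TotFib}(C)=\text{Fib}(a_0\to b)$ with $b=\lim_{|\sigma|>0}C_\sigma$, and splits off the $\Ae_0$-end using $\text{Map}_{\Ce}(b,a_0')\simeq *$. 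It then returns to $\text{Map}_{\text{Ch}^n(\Ce)}(C,C')$ via the limit cubes $D,D'$, a step that tacitly uses the same statement for the shorter complex $a_1\to\dots\to a_n$. Your version is more elementary and makes the induction explicit; the paper's is phrased directly in terms of $\text{TotFib}$. The forward direction of (b) agrees with the paper.
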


\begin{notat}
    Suppose we are in the situation of Proposition \ref{prop:5}. For $0 \leq i \leq n$ and an object $x \in \Ce$, we denote by $D_x^i : J(n+1) \longrightarrow \Ce$ the diagram
    \[\begin{tikzcd}[row sep = small, column sep = small, cramped]
	&&&&[-5mm] {n-i+1} \\
	0 & 0 & \cdots & 0 & x && \cdots && x \\
	0 & 0 & \ddots & \vdots \\
	&& \ddots && \vdots && \ddots && \vdots \\
	&&& 0 \\
	&&& 0 & x && \cdots && x & {n-i} \\
	&&&& 0 & 0 && \cdots & 0 \\
	&&&&&& \ddots & \ddots & \vdots \\
	&&&&&&& 0 & 0 \\
	&&&&&&& 0 & 0
	\arrow[dashed, no head, from=1-5, to=2-5]
	\arrow[from=2-1, to=2-2]
	\arrow[from=2-1, to=3-1]
	\arrow[from=2-2, to=2-3]
	\arrow[from=2-2, to=3-2]
	\arrow[from=2-3, to=2-4]
	\arrow[from=2-4, to=2-5]
	\arrow[from=2-4, to=3-4]
	\arrow[from=2-5, to=2-7]
	\arrow[from=2-5, to=4-5]
	\arrow[from=2-7, to=2-9]
	\arrow[from=2-9, to=4-9]
	\arrow[from=3-1, to=3-2]
	\arrow[from=3-2, to=4-3]
	\arrow[from=3-4, to=5-4]
	\arrow[from=4-3, to=5-4]
	\arrow[from=4-5, to=6-5]
	\arrow[from=4-9, to=6-9]
	\arrow[from=5-4, to=6-4]
	\arrow[from=6-4, to=6-5]
	\arrow[from=6-5, to=6-7]
	\arrow[from=6-5, to=7-5]
	\arrow[from=6-7, to=6-9]
	\arrow[from=6-9, to=7-9]
	\arrow[dashed, no head, from=6-10, to=6-9]
	\arrow[from=7-5, to=7-6]
	\arrow[from=7-6, to=7-8]
	\arrow[from=7-6, to=8-7]
	\arrow[from=7-8, to=7-9]
	\arrow[from=7-9, to=8-9]
	\arrow[from=8-7, to=9-8]
	\arrow[from=8-9, to=9-9]
	\arrow[from=9-8, to=9-9]
	\arrow[from=9-8, to=10-8]
	\arrow[from=9-9, to=10-9]
	\arrow[from=10-8, to=10-9]
\end{tikzcd}\]
given explicitly by
\[
D_x^i(k,l) = \begin{cases}
    x &\text{for }k \leq n-i,\ l\geq n-i+1\\
    0 &\text{else.}
\end{cases}
\]
Note that $D_x^i \in S_{n+1}(\Ce)$, so that $D_x^i \in \langle \Ae_0, \dots ,\Ae_n\rangle$ holds iff $x \in \Ae_i$. Letting $x$ vary, one can show that this construction gives a functor
\[
D^i : \Ae_i \longrightarrow \langle \Ae_0,\dots, \Ae_n\rangle
\]
satisfying $\xi \circ D^i = i_{\Ae_i}$.
\end{notat}

\begin{proof}[Proof of Proposition]
(a): We show the implication ``$\Rightarrow$". Let $i < j$ and $a_i \in \EuScript A_i, a_j \in \EuScript A_j$. Further, let $\nu : \langle \Ae_0,\dots, \Ae_n\rangle \longrightarrow \text{Fun}(\Delta^1, \Ce)$ denote the restriction to $\{(0,n-i) < (0,n+1)\}$. Since $\xi$ factors over $\nu$, we have a commutative diagram
\[
\begin{tikzcd}
    \text{Map}_{\langle \Ae_0, \dots ,\Ae_n\rangle}(D_{a_j}^j, D^i_{a_i}) \ar[d, "\nu_*"'] \ar[rd, "\xi_*"] & \\
    \text{Map}_{\text{Fun}(\Delta^1,\Ce)}(a_j \overset{\text{id}}\to a_j,0 \to a_i) \ar[r] & \text{Map}_\Ce(a_j, a_i)
\end{tikzcd}
\]
Since $\xi_*$ is a homotopy equivalence and the Kan complex
\[\text{Map}_{\text{Fun}(\Delta^1,\Ce)}(a_j \overset{\text{id}}\to a_j,0 \to a_i)\]
is contractible, it follows that $\text{Map}_\Ce(a_j, a_i)$ is contractible.\\

We show the implication ``$\Leftarrow$". For $n=0$ this is trivially true. Let $n \geq 1$ and $C,C' \in \{\EuScript A_0, \dots ,\EuScript A_n\}$ depicted as
\begin{gather*}
C: a_0\rightarrow a_1 \rightarrow \dots \rightarrow a_n,\\
C': a'_0\rightarrow a'_1 \rightarrow \dots \rightarrow a'_n.
\end{gather*}
We have the canonical maps
\begin{gather*}
    f : a_0 \longrightarrow \lim_{\vert\sigma\vert > 0} C_\sigma =: b,\\
    f' : a'_0 \longrightarrow \lim_{\vert\sigma\vert > 0} C'_\sigma =: b'.
\end{gather*}
Using the equivalence $\text{Fib}(f) \simeq \text{Cof}(f[-1])$,
we have homotopy equivalences
\begin{align*}
    \text{Map}_\EuScript C(\text{TotFib}(C), \text{TotFib}(C')) &= \text{Map}_\EuScript C(\text{Fib}(f), \text{Fib}(f'))\\
    &\simeq \text{Map}_\EuScript C(\text{Cof}(f[-1]), \text{Fib}(f'))\\
    &\simeq \text{Fib}^T\{\text{Map}_\EuScript C(b[-1], \text{Fib}(f')) \rightarrow \text{Map}_\EuScript C(a_0[-1], \text{Fib}(f'))\}\\
    &\simeq \text{Fib}^T\left\{\begin{tikzcd}
	{\text{Fib}^T\{\text{Map}_\EuScript C(b[-1], a_0') \rightarrow \text{Map}_\EuScript C(b[-1], b')\}} \ar[d]\\
	{\text{Fib}^T\{\text{Map}_\EuScript C(a_0[-1], a_0') \rightarrow \text{Map}_\EuScript C(a_0[-1], b')\}}
\end{tikzcd} \right\}\\
    &\simeq  \text{Fib}^T\left\{\begin{tikzcd}
	{\text{Fib}^T\{\text{Map}_\EuScript C(b, a_0') \rightarrow \text{Map}_\EuScript C(b, b')\}} \ar[d]\\
	{\text{Fib}^T\{\text{Map}_\EuScript C(a_0, a_0') \rightarrow \text{Map}_\EuScript C(a_0, b')\}}
\end{tikzcd}\right\},
\end{align*}
where $\text{Fib}^T$ denotes the homotopy fiber in the category of spaces. By assumption $\text{Map}_\EuScript C(b, a_0')$ is contractible, so that the last iterated homotopy fiber above can be identified with the homotopy fiber product
\[
\text{Map}_\EuScript C(b,b') \times^h_{\text{Map}_\EuScript C(a_0,b')} \text{Map}_\EuScript C(a_0,a_0') \simeq \text{Map}_{\text{Fun}(\Delta^1,\Ce)}(f,f').
\]
Denoting by $D,D' \in \text{Ch}^n(\Ce)$ the respective limit diagrams associated to $b,b'$, we compute
\[
\text{Map}_{\text{Fun}(\Delta^1,\Ce)}(f,f') \simeq \text{Map}_{\text{Fun}(\Delta^1,\Ce)}(f,f') \times_{\text{Map}_\Ce(b,b')} \text{Map}_{\text{Ch}^n(\Ce)}(D,D') \simeq \text{Map}_{\text{Ch}^n(\Ce)}(C,C').
\]
This chain of identifications is compatible with the one induced by the functor $\text{TotFib}$, thus proving the claim.\\

(b): We show the implication ``$\Rightarrow$". Suppose that the functor
\[
\text{TotFib} : \{\EuScript A_0,\dots,\EuScript A_n\} \longrightarrow \EuScript C
\]
is essentially surjective. The total fiber of a coherent complex $C \in \{\EuScript A_0, \dots, \EuScript A_n\}$ can be computed as the limit of a finite diagram with values in the union $\EuScript A_0 \cup \dots \cup \EuScript A_n$. Since stable $\infty$-categories are closed under such limits, it follows that
\[
\EuScript C \subset \EuScript A_0 + \dots + \EuScript A_n
\]
and the other inclusion holds by definition.\\

For the implication ``$\Leftarrow$", suppose that
\[
\EuScript C = \EuScript A_0 + \dots + \EuScript A_n.
\]
We show that the functor
\[
\xi : \langle \EuScript A_0, \dots ,\EuScript A_n \rangle \longrightarrow \EuScript C
\]
is essentially surjective. For $0\leq i \leq n$, we have
\[\Ae_i = \text{im}(\xi \circ D^i) \subset \text{im}(\xi). \]
 Therefore, the $\text{im}(\xi)$ contains $\EuScript A_0 \cup \dots \cup \EuScript A_n$ and it suffices to prove that it is closed under fibers and cofibers. However, this immediately follows from the fact that $\xi$ is exact.
\end{proof}

\paragraph{Semiorthogonal decompositions.}
    \begin{defn}
        Let $\EuScript C$ be a stable $\infty$-category. We say that a tuple $(\EuScript A_0, \dots, \EuScript A_n)$ of full stable subcategories of $\EuScript C$ forms a \textit{semiorthogonal decomposition} (an \textit{SOD}, for short) of $\EuScript C$, if the functor $\xi : \langle \EuScript A_0, \dots, \EuScript A_n \rangle \rightarrow \EuScript C$ (or equivalently, the functor $\text{Tot-Fib} : \{\EuScript A_0,\dots, \EuScript A_n\} \rightarrow \EuScript C$) is an equivalence of $\infty$-categories.
    \end{defn}

    \begin{rem}
        As a first sanity check, note that any $\infty$-category $\EuScript C$ forms an SOD of itself. Furthermore, in the case $n=1$ we recover Definition 2.2.3 of \cite{dyck_spherical}.
    \end{rem}
    
    \begin{cor}\label{prop:6}
        Let $(\EuScript A_0, \dots, \EuScript A_n)$ be a tuple of full stable subcategories of a stable $\infty$-category $\EuScript C$. Then the following are equivalent:
        \begin{enumerate}[label=(\roman*)]
            \item $(\EuScript A_0, \dots, \EuScript A_n)$ forms a semiorthogonal decomposition of $\EuScript C$.
            \item $(\EuScript A_0, \dots, \EuScript A_n)$ is orthogonal and $\EuScript A_0+ \dots +\EuScript A_n = \EuScript C$.
            \item $(\text{h}\EuScript A_0, \dots, \text{h}\EuScript A_n)$ forms a semiorthogonal decomposition of $\text{h}\EuScript C$, i.e. $(\text{h}\EuScript A_0, \dots, \text{h}\EuScript A_n)$ is orthogonal and  $\text{h}\EuScript A_0+\dots+\text{h}\EuScript A_n = \text{h}\EuScript C$.
        \end{enumerate}
    \end{cor}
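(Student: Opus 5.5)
The plan is to derive (i)$\Leftrightarrow$(ii) directly from Proposition \ref{prop:5}, and then to reduce (ii)$\Leftrightarrow$(iii) to the observation that both orthogonality and the sum condition can be checked on homotopy categories.

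For (i)$\Leftrightarrow$(ii): by definition, (i) says that $\xi$, or equivalently $\text{TotFib}$ (these are identified by Corollary \ref{prop:3}(b)), is an equivalence of $\infty$-categories. An $\infty$-functor is an equivalence if and only if it is fully faithful and essentially surjective. Proposition \ref{prop:5}(a) identifies full faithfulness with orthogonality of $(\Ae_0,\dots,\Ae_n)$. Proposition \ref{prop:5}(b) identifies essential surjectivity with $\Ae_0+\dots+\Ae_n=\Ce$. So this equivalence is immediate.

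For (ii)$\Leftrightarrow$(iii) I will treat the two conditions separately.

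\textbf{Orthogonality.} As remarked in the paragraph on orthogonal tuples, $\Ae_i$ and $\Ae_j$ are orthogonal if and only if $\text{h}\Ae_i$ and $\text{h}\Ae_j$ are. The reason is as follows.
\begin{itemize}
\item Since $\Ce$ is stable, $\text{Map}_\Ce(b,a)$ is the underlying space of a mapping spectrum, with $\pi_k \text{Map}_\Ce(b,a)\cong \text{Hom}_{\text{h}\Ce}(b[k],a)$ for $k\geq 0$ at every basepoint.
\item Each $\Ae_j$ is stable, so it is closed under shifts. Hence vanishing of $\text{Hom}_{\text{h}\Ce}(b,a)$ for all $b\in\Ae_j$ forces all these homotopy groups to vanish.
\item $\pi_0$ is also trivial, so the space is contractible.
\end{itemize}

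\textbf{Sum condition.} I would use the statement in the paper that $\text{h}\overline{\Ae}$ is the smallest strictly full triangulated subcategory containing $\text{h}\Ae$. Applied to $\Ae=\Ae_0\cup\dots\cup\Ae_n$, this gives $\text{h}(\Ae_0+\dots+\Ae_n)=\text{h}\Ae_0+\dots+\text{h}\Ae_n$ as full subcategories of $\text{h}\Ce$. Both $\Ae_0+\dots+\Ae_n$ and $\Ce$ are strictly full subcategories of $\Ce$, and full subcategories of $\Ce$ are determined by their objects, which coincide with the objects of their homotopy categories. Therefore $\Ae_0+\dots+\Ae_n=\Ce$ holds exactly when $\text{h}\Ae_0+\dots+\text{h}\Ae_n=\text{h}\Ce$.

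\textbf{Expected obstacle.} The main point needing care is the identification $\text{h}\overline{\Ae}=\overline{\text{h}\Ae}$. It is asserted in the paper, but if it has to be justified, the argument goes as follows.
\begin{itemize}
\item One inclusion holds because $\text{h}\overline{\Ae}$ is triangulated: cofiber sequences in $\overline{\Ae}$ give distinguished triangles, and strict fullness is preserved.
\item For the other inclusion, the preimage in $\Ce$ of $\overline{\text{h}\Ae}$ is a strictly full subcategory closed under cofibers and fibers, because these become cones of distinguished triangles in $\text{h}\Ce$. It is therefore stable and contains $\Ae$, hence contains $\overline{\Ae}$.
\end{itemize}
The remaining step is the standard translation between the triangulated notion of semiorthogonal decomposition stated in (iii) and these two conditions. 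Here I take (iii) as defined by its own ``i.e.'' clause.
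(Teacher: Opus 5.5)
Your proposal is correct and follows the paper's proof. Both derive (i)$\Leftrightarrow$(ii) from Proposition \ref{prop:5}, and both reduce (ii)$\Leftrightarrow$(iii) to two facts: orthogonality can be checked on homotopy categories, and $\text{h}(\Ae_0+\dots+\Ae_n)=\text{h}\Ae_0+\dots+\text{h}\Ae_n$. The paper simply asserts these, so your arguments via $\pi_k\text{Map}_\Ce(b,a)\cong\text{Hom}_{\text{h}\Ce}(b[k],a)$ and via the two inclusions $\text{h}\overline{\Ae}=\overline{\text{h}\Ae}$ only supply details it leaves implicit.
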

    \begin{proof}
        This follows directly from \ref{prop:5} and the following two facts:
        \begin{enumerate}
            \item $
        (\EuScript A_0, \dots, \EuScript A_n)$ is orthogonal iff $(\text{h}\EuScript A_0, \dots, \text{h}\EuScript A_n)$ is orthogonal.
        \item $\text{h}\EuScript A_0+\dots+\text{h}\EuScript A_n = \text{h}(\EuScript A_0+\dots+\EuScript A_n).$ \qedhere
        \end{enumerate}
    \end{proof}

    \begin{rem}
        We make two elementary observations about semiorthogonal decompositions:
        \begin{enumerate}
            \item Length $n$ semiorthogonal decompositions of a stable $\infty$-category $\Ce$ are in one-to-one correspondence to length $n$ semiorthogonal decompositions of the triangulated category $\text{h}\Ce$.
            \item Length $n$ semiorthogonal decompositions can be constructed inductively from semiorthogonal decompositions of strictly lower length: Given an SOD $(\Ae,\Be)$ of $\Ce$ and SODs $(\Ae_0,\dots,\Ae_{i})$ of $\Ae$ and $(\Be_0,\dots,\Be_j)$ of $\Be$, we have an SOD $
            (\Ae_0,\dots,\Ae_i,\Be_0,\dots,\Be_j)$
            of $\Ce$.
        \end{enumerate}
    \end{rem}

    \begin{prop}\label{prop:inductive}
        Let $(\EuScript A_0,\dots, \EuScript A_n)$ be a semiorthogonal decomposition of a stable $\infty$-category $\EuScript C$. Then, given integers
        \[
        0 \leq n_1 < \dots < n_k \leq n
        \]
        we have a semiorthogonal decomposition of $\Ce$ given by
        \[
        (\sum_{i=0}^{n_1} \Ae_i, \sum_{i=n_1 +1}^{n_2} \Ae_i, \dots, \sum_{i = n_k + 1}^n \Ae_i).
        \]
    \end{prop}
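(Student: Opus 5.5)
The plan is to use the characterization in Corollary \ref{prop:6}: a tuple of full stable subcategories forms an SOD if and only if it is orthogonal and its sum is all of $\Ce$. Write $\Be_0 = \sum_{i=0}^{n_1}\Ae_i$, $\Be_1 = \sum_{i=n_1+1}^{n_2}\Ae_i$, and so on up to $\Be_k = \sum_{i=n_k+1}^{n}\Ae_i$. Each $\Be_j$ is by definition a strictly full stable subcategory, so the corollary applies, and there are two conditions to check.

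The generation condition is formal. The sum $\Be_0+\dots+\Be_k$ is a strictly full stable subcategory containing every $\Ae_i$, hence it contains $\overline{\Ae_0\cup\dots\cup\Ae_n} = \Ae_0+\dots+\Ae_n$. This equals $\Ce$ by Corollary \ref{prop:6} applied to the original SOD.

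The main step is orthogonality. For $j<l$ we must show $\text{Map}_\Ce(b,a)$ is contractible for all $b\in\Be_l$ and $a\in\Be_j$. Every index occurring in $\Be_l$ is strictly larger than every index occurring in $\Be_j$. I will argue in two stages, each using that a left or right orthogonal is strictly full and stable.
\begin{enumerate}
\item Fix $a\in\Ae_p$ with $p$ an index of $\Be_j$. The left orthogonal ${}^\perp\{a\}$, consisting of all $c$ with $\text{Map}_\Ce(c,a)$ contractible, is closed under equivalences. It is also closed under fibers and cofibers, since $\text{Map}_\Ce(-,a)$ sends cofiber sequences to fiber sequences of spaces and its homotopy groups are $\pi_0\text{Map}(\Sigma^m -,a)$, so the long exact sequence applies. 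It contains $0$ and each $\Ae_q$ with $q$ an index of $\Be_l$, by orthogonality of the original tuple. Hence it contains their stable closure, which is $\Be_l$.
\item Now fix $b\in\Be_l$. By the same argument the right orthogonal $\{b\}^{\perp}$, consisting of all $c$ with $\text{Map}_\Ce(b,c)$ contractible, is strictly full and stable. By the first stage it contains every $\Ae_p$ with $p$ an index of $\Be_j$, so it contains $\Be_j$.
\end{enumerate}

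The only point needing care is the closure of orthogonals under fibers and cofibers in the $\infty$-setting. Alternatively, this can be handled at the level of homotopy categories: by Corollary \ref{prop:6}(iii) it suffices to work in $\text{h}\Ce$. There $\text{h}\Be_j$ is the triangulated closure of the $\text{h}\Ae_i$, and the usual five-lemma argument shows that $\text{Hom}(-,a)=0$ is preserved under shifts and cones.

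Combining generation and orthogonality, Corollary \ref{prop:6} gives that $(\Be_0,\dots,\Be_k)$ is an SOD of $\Ce$.
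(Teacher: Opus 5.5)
Your reduction to Corollary \ref{prop:6} and the generation step are correct. The two-stage orthogonality argument is a genuinely different route from the paper's. However, its central closure claim is false as stated, because you take orthogonals of a \emph{single} object.

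\emph{The gap.} For $m\ge 0$ we have $\pi_m\text{Map}_\Ce(c,a)\cong\text{Hom}_{\text{h}\Ce}(c[m],a)$. So $c\in{}^\perp\{a\}$ only means $\text{Hom}_{\text{h}\Ce}(c[m],a)=0$ for $m\ge 0$, and says nothing about negative shifts. Consequently ${}^\perp\{a\}$ is closed under cofibers and suspensions, but not under fibers. For example, in $D^b(\text{Vect}_k^{\text{fin}})$ with $a=k$, the object $k[1]$ lies in ${}^\perp\{a\}$, but $\text{Fib}(k[1]\to 0)\simeq k$ does not. In your long exact sequence for a fiber sequence $x\to y\to z$, the term $\text{Hom}_{\text{h}\Ce}(z[-1],a)$ is uncontrolled. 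Dually, $\{b\}^\perp$ is closed under fibers but not under cofibers. Since $\Be_l$ and $\Be_j$ are stable closures, which require both operations, neither stage goes through as written. Your homotopy-category variant has the same defect: for a fixed $a$, the condition $\text{Hom}(-,a)=0$ is not invariant under shifts.

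\emph{The fix.} Quantify over whole stable subcategories.
\begin{enumerate}
\item In stage 1, use ${}^\perp\Ae_p$ instead of ${}^\perp\{a\}$. Since $\Ae_p$ is closed under $\Sigma^{\pm1}$ and $\text{Map}_\Ce(\Sigma^{-1}c,a)\simeq\text{Map}_\Ce(c,\Sigma a)$, this orthogonal is closed under $\Sigma^{\pm 1}$. Your long exact sequence argument then controls every degree, so ${}^\perp\Ae_p$ is a strictly full stable subcategory. Stage 1 gives $\Be_l\subset{}^\perp\Ae_p$, i.e.\ $\Ae_p\subset\Be_l^\perp$.
\item In stage 2, replace $\{b\}^\perp$ by $\Be_l^\perp$, which is strictly full and stable for the same reason. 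Then $\Be_j\subset\Be_l^\perp$.
\end{enumerate}
In $\text{h}\Ce$ the correct condition is $\text{Hom}(-,a[m])=0$ for all $m\in\mathbb Z$. With this repair your proof is complete.

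\emph{Comparison with the paper.} The paper argues as follows:
\begin{enumerate}
\item It writes $a$ and $b$ as total fibers of coherent complexes $C$ and $D$ with terms in the respective $\Ae_i$, using Corollary \ref{prop:6} for the sub-tuples.
\item It pads the complexes to a common length.
\item It uses $\text{TotFib}$ to identify $\text{Map}_\Ce(b,a)$ with a mapping space in $\text{Fun}(I^r,\Ce)$.
\item It computes that space as a homotopy limit of the termwise spaces $\text{Map}_\Ce(D(\sigma),C(\tau))$, each of which is contractible.
\end{enumerate}
Your argument is the classical thick-subcategory argument. It needs neither the coherent-complex model, nor Proposition \ref{prop:5}, nor the end formula for mapping spaces in functor categories. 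It uses only orthogonality of the original tuple plus stability of orthogonals. The paper's route instead exhibits $a$ and $b$ explicitly as finite limits of objects of the $\Ae_i$, which fits the formalism of the section. The cost is extra bookkeeping: padding and matching the two complexes so that the $\text{TotFib}$ comparison literally applies.
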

    \begin{proof}
        It is immediate that
        \[
        \sum_{i=0}^{n_1} \Ae_i +  \sum_{i=n_1 +1}^{n_2} \Ae_i +  \dots + \sum_{i = n_k + 1}^n \Ae_i = \Ce.
        \]
        It remains to show orthogonality. We prove this in the following form: Let $I,J \subset [n]$ with $i < j$ for all $i\in I,j \in J$. Then, for $a \in \sum_{i \in I} \Ae_i$ and $b \in \sum_{j\in J} \Ae_j$, the space $\text{Map}_\Ce(b,a)$ is contractible.\\
        
        Suppose that
        \[
        J = \{j_0 <\dots < j_r\}, \quad K= \{k_0 <\dots <k_s\}.
        \]
        By Corollary \ref{prop:6} (ii), $a$ is the total fiber of a complex $C$ of length $r$ and $b$ is the total fiber of a complex $D$ of length $s$, satisfying $C_m \in \Ae_{j_m}$ and $D_m \in \Ae_{k_m}$. We may assume that $r = s$, otherwise we add zeros to the complex of smaller length. Then, $\text{TotFib}$ induces a homotopy equivalence
        \[
        \text{Map}_{\text{Ch}^r(\Ce)}(D_m,C_m) \simeq \text{Map}_\Ce(b,a).
        \]
        However, we have
        \[
        \text{Map}_{\text{Ch}^r(\Ce)}(D_m,C_m) = \text{Map}_{\text{Fun}(I^r,\Ce)}(D_m,C_m) \simeq \underset{\longleftarrow}{\text{holim}}(\mathscr{F}),
        \]
        where $\mathscr{F} : \text{Mor}(I^r) \longrightarrow \text{Kan},\ (\sigma,\tau) \longmapsto \text{Map}_\Ce(D_m(\sigma),C_m(\tau))$. It follows from our assumptions that each of the spaces $\text{Map}_\Ce(D_m(\sigma),C_m(\tau))$ is contractible, so that $\text{Map}_{\text{Ch}^r(\Ce)}(D_m,C_m)$ and thus also $\text{Map}_\Ce(b,a)$ is contractible.
    \end{proof}

    \begin{ex}
        Let $\EuScript A$ be a stable $\infty$-category and define a stable $\infty$-category via
        \[
        \EuScript C := \text{Fun}([n],\EuScript A).
        \]
        For $0 \leq i \leq n$, we denote by $\EuScript A_i \subset \EuScript C$
        the full subcategory spanned by functors $\alpha_i : [n] \rightarrow \EuScript A$ such that $\alpha(j)$ is a zero object for $j \neq i$. Note that the subcategories $\Ae_i$ are stable. To show that $(\EuScript A_0, \dots, \Ae_n)$ forms a semiorthogonal decomposition of $\EuScript C$, it suffices to prove:
        \begin{enumerate}
            \item We have $\Ae_0 + \dots +\Ae_n = \Ce$.
            \item For $i < j$ and $\alpha_i \in \EuScript A_i, \alpha_j \in \EuScript A_j$, the space $\text{Map}_\EuScript C(\alpha_j,\alpha_i)$ is contractible.
        \end{enumerate}
        Let $\alpha \in \Ce$. For $0 \leq i \leq n$, we denote by $\alpha_{\leq i} : [n] \longrightarrow \Ce$ a diagram with $\alpha_{\leq i}\vert_{[i]} = \alpha\vert_{[i]}$ and $\alpha_{\leq i}(j) = 0$ for $j > i$. Note that there are obvious maps $f_i : \alpha_{\leq i-1} \longrightarrow \alpha_{\leq i}$ with $\alpha_i := \text{Cof}(f_i) \in \Ae_i$. We deduce that $\alpha_{\leq i-1} \in \Ae_0 + \dots +\Ae_n$ implies $\alpha_{\leq i} \in \Ae_0 + \dots +\Ae_n$. Applying this iteratively, $\alpha \in \Ae_0 + \dots +\Ae_n$ follows from $\alpha_{\leq 0} \in \Ae_0$, which proves (1). This says that $\xi : \langle \Ae_0,\dots ,\Ae_n\rangle \longrightarrow \Ce$ is essentially surjective. Explicitly in the case $n=2$, an object of the fiber $\xi^{-1}(\alpha)$ is given by a diagram of the form:
        \[
        \begin{tikzcd}[row sep = tiny, column sep = small]
            0 \ar[rrr] \ar[rrd] \ar[ddd] & {} & {} & 0 \ar[rrr] \ar[rrd] \ar[ddd] & {} & {} & \alpha(0) \ar[rrd, "\sim"] \ar[ddd] & {} & {} & {} &\\
            & & 0 \ar[ddd] \ar[rrr] & & & 0 \ar[rrr] \ar[rrd] \ar[ddd] & & & \alpha(0) \ar[rrd,"\sim"] \ar[ddd] & &\\
            & & & & & & & 0 \ar[ddd] \ar[rrr] & & & \alpha(0) \ar[ddd]\\
            0 \ar[rrr] \ar[rrd] \ar[ddd] & & & \alpha(1) \ar[rrr, "\sim"] \ar[rrd,"\sim"] \ar[ddd] & & & \alpha(1) \ar[ddd] \ar[rrd, "\sim"] & & & &\\
            & & 0 \ar[rrr] \ar[ddd] & & & \alpha(1) \ar[rrr, "\sim"] \ar[rrd] \ar[ddd] & & & \alpha(1) \ar[ddd] \ar[rrd] & &\\
            & & & & & & & 0 \ar[ddd] \ar[rrr] & & & 0 \ar[ddd]\\
            \alpha(2) \ar[rrr,"\sim"] \ar[rrd] & & & \alpha(2) \ar[rrr,"\sim"] \ar[rrd] & & & \alpha(2) \ar[rrd] & & & &\\
            & & 0 \ar[rrr] & & & 0 \ar[rrr] \ar[rrd] & & & 0 \ar[rrd] & &\\
            & & & & & & & 0 \ar[rrr] & & & 0
        \end{tikzcd}\]
        
         To show (2), let $i < j$ and $\alpha_i\in \EuScript A_i, \alpha_j \in \EuScript\alpha A_j$. The space $\text{Map}_\EuScript C(\alpha_j,\alpha_i)$ is, up to homotopy equivalence, the homotopy limit of the diagram $\mathscr{F}: \text{Mor}([n]) \longrightarrow \text{Kan}, \ (k,l) \longmapsto \text{Map}_\Ce(\alpha_j(k),\alpha_i(l))$. Since $i < j$, the spaces $\mathscr{F}(k,l)$ are contractible so that $\text{Map}_\EuScript C(\alpha_j,\alpha_i)$ is contractible.
    \end{ex}

\paragraph{Locally Cartesian and coCartesian SODs.} In the following, we will study an important class of semiorthogonal decompositions, in which the morphisms "from left to right" are controllable.

\begin{notat}
Let $\EuScript C$ be a stable $\infty$-category, with stable subcategories $\EuScript A_0, \dots, \EuScript A_n \subset \Ce$. Then consider the following simplicial set $\chi(\EuScript A_0,\dots,\EuScript A_n)$: An $m$-simplex of $\chi(\EuScript A_0,\dots,\EuScript A_n)$ consists of
    \begin{enumerate}
        \item an $m$-simplex $\sigma : [m] \rightarrow [n]$ of $\Delta^{n}$,
        \item an $m$-simplex $\alpha : [m] \rightarrow \EuScript C$ such that for every $0 \leq i \leq n$, we have $\alpha\vert_{\sigma^{-1}(i)} \subset \Ae_i$.
    \end{enumerate}
We denote by $p : \chi(\EuScript A_0,\dots,\EuScript A_n) \rightarrow \Delta^{n}$ the forgetful map to part (1) of the data. By construction, $p^{-1}(i) = \EuScript A_{i}$. It follows immediately from the inner horn filling property of $\EuScript C$ that $\chi(\EuScript A_0,\dots,\EuScript A_n)$ is an $\infty$-category and this implies that $p$ is an inner fibration (see \cite[Prop. 2.3.1.5]{lurie-htt}).
\end{notat}

\begin{defn}
    A semiorthogonal decomposition $(\EuScript A_0, \dots, \EuScript A_n)$ is called \textit{locally Cartesian} (resp. \textit{locally coCartesian}) if the inner fibration $p : \chi(\EuScript A_0, \dots, \EuScript A_n) \rightarrow \Delta^n$ is a locally Cartesian (resp. locally coCartesian) fibration.
\end{defn}

\begin{rem}
    Analogously to above, one can define the stronger notion of a \textit{Cartesian} or \textit{coCartesian} semiorthogonal decomposition. As it turns out, this condition is too restrictive for many interesting examples, hence why we choose to discuss the weaker notion. However, in the case $n=1$ the definitions coincide (see Example \ref{ex:1}).
\end{rem}

For an inner fibration $p$, the property of being locally (co)Cartesian is formulated as the existence of certain locally $p$-(co)Cartesian edges. In our case, this condition has a simpler description.

\begin{defn}\label{def:ab-cart}
    Let $\EuScript C$ be a stable $\infty$-category and $\Ae,\Be \subset \Ce$ stable subcategories. An edge $e : a \rightarrow b$ in $\EuScript C$ with $a \in \EuScript A$ and $b \in \EuScript B$ is called
    \begin{enumerate}
        \item $(\EuScript A,\EuScript B)$-\textit{Cartesian} if $e$ defines a final object of the $\infty$-category $\EuScript C_{/b} \times_\EuScript C \EuScript A$.
        \item $(\EuScript A,\EuScript B)$-\textit{coCartesian} if $e$ defines an initial object of the $\infty$-category $\EuScript C_{a/} \times_\EuScript C \EuScript B$.
    \end{enumerate}
\end{defn}

\begin{prop}\label{prop:7}
    Let $\EuScript C$ be a stable $\infty$-category with stable subcategories $\Ae,\Be \subset \Ce$. Let $e : a \rightarrow b$ be an edge of $\EuScript C$ with $a \in \EuScript A$ and $b \in \EuScript B$. Then, the following are equivalent:
    \begin{enumerate}[label=(\roman*)]
        \item $e$ is $(\EuScript A,\EuScript B)$-Cartesian (resp. $(\EuScript A,\EuScript B)$-coCartesian).
        \item $e$, considered as an edge of $\chi(\EuScript A,\EuScript B)$, is $p$-Cartesian (resp. $p$-coCartesian).
    \end{enumerate}
\end{prop}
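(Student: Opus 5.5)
We treat the Cartesian case; the coCartesian case is dual, obtained by passing to opposite categories. The approach is to compare the slice in the definition of a $p$-Cartesian edge with the slice $\Ce_{/b}\times_\Ce \Ae$. The comparison should be tautological because $\chi := \chi(\Ae,\Be) \to \Delta^1$ is a full subcategory of $\Ce\times\Delta^1$ in the appropriate sense.

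\textbf{Step 1.} Observe that $\chi$ is the full subcategory of $\Ce \times \Delta^1$ spanned by the pairs $(x,0)$ with $x\in\Ae$ and $(x,1)$ with $x\in\Be$. Indeed, a simplex of $\chi$ is a simplex of $\Ce\times\Delta^1$ all of whose vertices satisfy this condition, and the condition is checked vertexwise. Consequently, for an object $y$ of $\chi$, the slice $\chi_{/y}$ is the full simplicial subset of $(\Ce\times\Delta^1)_{/y} = \Ce_{/y}\times\Delta^1_{/p(y)}$ on the analogous vertices.

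\textbf{Step 2.} Reduce $p$-Cartesianness to a criterion on mapping spaces. By the standard characterization \cite[Prop. 2.4.4.3]{lurie-htt}, $e$ is $p$-Cartesian if and only if, for every $z \in \chi$, the square
\[
\begin{tikzcd}
\text{Map}_\chi(z,a) \ar[r,"e_*"] \ar[d] & \text{Map}_\chi(z,b) \ar[d]\\
\text{Map}_{\Delta^1}(p(z),0) \ar[r] & \text{Map}_{\Delta^1}(p(z),1)
\end{tikzcd}
\]
is homotopy Cartesian. We evaluate this by cases on $p(z)$.

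If $p(z)=1$, then $\text{Map}_{\Delta^1}(1,0)=\varnothing$ and $\text{Map}_\chi(z,a)=\varnothing$, so the square is trivially Cartesian. If $p(z)=0$, both bottom spaces are points, so the condition reduces to
\[
e_*:\text{Map}_\Ce(z,a)\to\text{Map}_\Ce(z,b)
\]
being a homotopy equivalence for every $z\in\Ae$. Here we use that, by Step 1, mapping spaces in $\chi$ between objects over $0$, or from $0$ to $1$, are just mapping spaces in $\Ce$.

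\textbf{Step 3.} Show that condition (i) is equivalent to this same statement. By definition, $e$ is a final object of $\Ce_{/b}\times_\Ce\Ae$. The mapping space in $\Ce_{/b}$ from $(z\xrightarrow{g} b)$ to $(a\xrightarrow{e} b)$ is the homotopy fiber of $e_*:\text{Map}_\Ce(z,a)\to\text{Map}_\Ce(z,b)$ over $g$ \cite[Lemma 5.5.5.12]{lurie-htt}. Since $\Ae$ is full, $\Ce_{/b}\times_\Ce\Ae$ is a full subcategory of $\Ce_{/b}$, so the same formula computes its mapping spaces. Hence $e$ is final if and only if all these homotopy fibers are contractible, for every $z\in\Ae$ and every $g$. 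That is exactly the statement that $e_*$ is an equivalence for all $z\in\Ae$, which is the condition obtained in Step 2.

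\textbf{Main obstacle.} The main difficulty is making sure the definition of $p$-Cartesian edge in Definition \ref{defn:1} (a statement about trivial fibrations of slices) matches the mapping space criterion. I would simply invoke Lurie's equivalence between the two formulations instead of reproving it. I would also check that the identification of mapping spaces in $\chi$ with those in $\Ce$ in Step 1 is genuinely compatible with composition by $e$, which holds because the inclusion $\chi\to\Ce$ is a functor.
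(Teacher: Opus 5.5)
Your proof is correct, but it takes a different route from the paper's. The paper stays at the level of slice simplicial sets and never uses mapping spaces. It unravels the slices to identify the defining map
\[
\chi(\Ae,\Be)_{/e}\longrightarrow\chi(\Ae,\Be)_{/b}\times_{\Delta^1_{/1}}\Delta^1_{/\{0\to1\}}
\]
with $\Ce_{/e}\times_\Ce\Ae\to\Ce_{/b}\times_\Ce\Ae$. The point is that every non-cone vertex of such a simplex lies over $0$, hence in $\Ae$. This map is just $\De_{/e}\to\De$ for $\De=\Ce_{/b}\times_\Ce\Ae$, and it is a trivial Kan fibration exactly when $e$ is final in $\De$.

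You instead translate both sides into mapping spaces: \cite[Prop.~2.4.4.3]{lurie-htt} on the $\chi$ side and \cite[Lemma~5.5.5.12]{lurie-htt} on the slice side. The two meet at the condition that $e_*:\text{Map}_\Ce(z,a)\to\text{Map}_\Ce(z,b)$ is an equivalence for all $z\in\Ae$.

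Each argument spends exactly one standard conversion.
\begin{itemize}
\item The paper uses the trivial-fibration definition of $p$-Cartesian verbatim, and converts ``final'' into ``$\De_{/e}\to\De$ is a trivial fibration''.
\item You use the paper's own mapping-space definition of final objects verbatim, and convert the trivial-fibration definition of $p$-Cartesian into the homotopy-pullback criterion.
\end{itemize}
The paper's route is shorter and purely combinatorial, although it leaves the slice identification to the reader. Yours needs the case split on $p(z)$, but it produces the intermediate mapping-space criterion explicitly. That criterion, in its dual coCartesian form, is how the condition is actually checked later: in the proof of Theorem \ref{prop:loc-beilinson}, via the fiber sequence $\text{Map}_\De(f,g)\to\text{Map}_\Ce(\Fc,\Gc_\bullet)\to\text{Map}_\Ce(\Oc_{\P^n}(i),\Gc_\bullet)$.
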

\begin{proof}
    We prove the equivalence for the "Cartesian" case. The "coCartesian" case is dual.

    The latter condition is equivalent to the map
    \[
    \chi(\EuScript A, \EuScript B)_{/e} \longrightarrow \chi(\EuScript A, \EuScript B)_{/b} \times_{\Delta^1_{/1}} \Delta^1_{/\{0\rightarrow 1\}}
    \]
    being a trivial Kan fibration. Unravelling the definition of the slice categories, this map identifies with the map
    \[
    \EuScript C_{/e} \times_\EuScript C \EuScript A \longrightarrow \EuScript C_{/b} \times_\EuScript C \EuScript A
    \]
    which is a trivial Kan fibration if and only if $e$ defines a final object in $\EuScript C_{/b} \times_\EuScript C \EuScript A$.
\end{proof}

\begin{cor}\label{cor:loccart}
Let $\EuScript C$ be a stable $\infty$-category equipped with a semiorthogonal decomposition $(\Ae_0,\dots, \Ae_n)$. Then the following are equivalent:
\begin{enumerate}[label=(\roman*)]
        \item $(\EuScript A_0,\dots,\EuScript A_n)$ is locally Cartesian (resp. locally coCartesian).
        \item For $0 \leq i < j \leq n$ and each $a_j \in \Ae_j$, there exists an $(\Ae_i,\Ae_j)$-Cartesian edge $e: a_i \to a_j$. \pushQED{\qed}\popQED
    \end{enumerate}
\end{cor}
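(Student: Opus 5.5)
The plan is to reduce everything to Proposition \ref{prop:7} via the definition of a locally (co)Cartesian fibration, treating the Cartesian case; the coCartesian case is dual, with $(\Ae_i,\Ae_j)$-coCartesian edges $a_i \to a_j$ for each $a_i \in \Ae_i$. By Definition \ref{defn:2}, $p : \chi(\Ae_0,\dots,\Ae_n) \to \Delta^n$ is locally Cartesian iff for every edge $e : \Delta^1 \to \Delta^n$ the pullback $\chi \times_{\Delta^n} \Delta^1 \to \Delta^1$ is a Cartesian fibration.

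First I sort the edges of $\Delta^n$. A degenerate edge at $i$ gives the pullback $\Ae_i \times \Delta^1 \to \Delta^1$ (every simplex lies in $\Ae_i$). This is trivially a Cartesian fibration: for $y$ over $1$, the identity edge from $y$ over $0$ to $y$ over $1$ is Cartesian, essentially Example \ref{ex:1}(1). For a nondegenerate edge $i<j$, I unwind the definition of $\chi$: an $m$-simplex of the pullback is a map $\sigma:[m]\to[1]$ together with $\alpha:[m]\to\Ce$ such that $\alpha|_{\sigma^{-1}(0)}\subset\Ae_i$ and $\alpha|_{\sigma^{-1}(1)}\subset\Ae_j$. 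This is exactly the simplicial set $\chi(\Ae_i,\Ae_j)$ with its projection to $\Delta^1$, so the pullback along $i<j$ is isomorphic over $\Delta^1$ to $\chi(\Ae_i,\Ae_j)\to\Delta^1$. By Example \ref{ex:1}(2) this is Cartesian iff for each $a_j\in\Ae_j$ there is a $p$-Cartesian edge covering $0\to1$ with target $a_j$.

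Next I check that restricting to edges covering $0\to 1$ loses nothing. Over a degenerate edge of $\Delta^1$ (inside a fiber), the identity or any equivalence is automatically Cartesian. Finally, Proposition \ref{prop:7}, applied with $\Ae=\Ae_i$ and $\Be=\Ae_j$, identifies $p$-Cartesian edges $a_i\to a_j$ with $(\Ae_i,\Ae_j)$-Cartesian edges. This gives (i)$\Leftrightarrow$(ii).

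The only point needing care is the identification of the pullback with $\chi(\Ae_i,\Ae_j)$. This is a direct comparison of simplices, and the comparison is compatible with slices, so Proposition \ref{prop:7} applies verbatim. The semiorthogonality hypothesis is not actually used.
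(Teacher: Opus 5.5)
Your argument is correct and is exactly the unwinding the paper intends: the paper states this corollary without proof as an immediate consequence of Proposition \ref{prop:7}, once one observes that the pullback of $p$ along a nondegenerate edge $i<j$ of $\Delta^n$ is $\chi(\Ae_i,\Ae_j)\to\Delta^1$ and that degenerate edges impose no condition. One small quibble: the reduction to lifts over $0\to 1$ follows from the definition of a Cartesian fibration together with the fact that identities are Cartesian, not from Example \ref{ex:1}(2), which instead says that locally Cartesian and Cartesian coincide over $\Delta^1$.
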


\paragraph{Admissibility.} For a stable subcategory $\Ae \subset \Ce$ of a stable $\infty$-category, it is sensible to ask for conditions for $(\Ae,{^\perp \Ae})$ or $(\Ae^\perp , \Ae)$ to form an SOD, since ${^\perp \Ae}, \Ae^\perp$ are essentially (up to strictly full closure) the only candidates for such SODs. A solution to this is provided by the notion of admissibility. We show that the property of an SOD being locally (co)Cartesian can also be formulated in terms of admissibility conditions.

\begin{defn}\label{defn:1}
Let $\Ce$ be a stable $\infty$-category and $\Ae \subset \Ce$ a stable subcategory. Denoting by $i : \Ae \rightarrow \Ce$ the embedding functor, $\Ae$ is called
\begin{itemize}
	\item \textit{left admissible} if $i$ has a left adjoint.
	\item \textit{right admissible} if $i$ has a right adjoint.
	\item \textit{admissible} if $i$ has both a left and a right adjoint.
\end{itemize}
\end{defn}

\begin{prop}\label{prop:admiss-cocart}
Let $\Ce$ be a stable $\infty$-category and $\Ae \subset \Ce$ a stable subcategory.
\begin{enumerate}[label=(\alph*)]
	\item $\Ae \subset \Ce$ is left admissible if and only if for every $x \in \Ce$ there exists a $(\Ce,\Ae)$-coCartesian edge $e : x \rightarrow a$. Moreover, any left adjoint $^*i$ to $i: \Ae \to \Ce$ sends $x$ to the target of a $(\Ce,\Ae)$-coCartesian edge.
	\item $\Ae \subset \Ce$ is right admissible if and only if for every $x \in \Ce$ there exists an $(\Ae,\Ce)$-Cartesian edge $e : a \rightarrow x$. Moreover, any right adjoint $i^*$ to $i: \Ae \to \Ce$ sends $x$ to the source of an $(\Ae,\Ce)$-Cartesian edge.
\end{enumerate}
\end{prop}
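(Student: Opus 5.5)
We reduce both parts to the notion of adjunction via Grothendieck constructions fixed in §\ref{groth-cons}, using Proposition \ref{prop:7} to translate between $p$-(co)Cartesian edges and $(\Ce,\Ae)$-coCartesian (resp.\ $(\Ae,\Ce)$-Cartesian) edges. We treat (a); part (b) is dual, obtained by passing to opposite categories, which swaps left/right adjoints and Cartesian/coCartesian edges.

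For (a), consider the simplicial set $\chi(\Ce,\Ae)$ with its projection $p:\chi(\Ce,\Ae)\to\Delta^1$, built exactly as $\chi(\Ae_0,\Ae_1)$ in the Notation preceding Definition \ref{def:ab-cart}, with $\Ae_0=\Ce$ and $\Ae_1=\Ae$. Here $\Ae_0=\Ce$ is not a proper subcategory, but the construction and Proposition \ref{prop:7} do not use semiorthogonality. Then $p^{-1}(0)=\Ce$ and $p^{-1}(1)=\Ae$. The first step is to identify $\chi(\Ce,\Ae)$ with the contravariant Grothendieck construction $\chi(i)$ of the inclusion $i:\Ae\to\Ce$, suitably oriented. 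Concretely, an $m$-simplex of either is an $m$-simplex $\sigma$ of $\Delta^1$ together with a simplex of $\Ce$ whose vertices over $1$ lie in $\Ae$. Since $i$ is the inclusion of a full subcategory, the compatibility data of the Grothendieck construction reduce to a single simplex of $\Ce$, so the two simplicial sets agree. Under this identification the $p$-Cartesian edges over $0\to1$ are the edges $c\to a$ with $c\simeq i(a)$, which exist for every $a$. Thus $p$ is always Cartesian and classifies $i$, up to the variance conventions.

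By the definition of adjunction, $i$ admits a left adjoint exactly when this fibration is also coCartesian. By Example \ref{ex:1}(2) this means that for every $x\in p^{-1}(0)=\Ce$ there is a $p$-coCartesian edge $x\to a$ lying over $0\to1$. By Proposition \ref{prop:7}, such an edge is the same as a $(\Ce,\Ae)$-coCartesian edge. This proves the equivalence in (a).

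For the ``moreover'' clause, let ${}^*i$ be a left adjoint classifying the coCartesian fibration $p$. By Example \ref{ex:2}(b), applied in its coCartesian form, there is a $p$-coCartesian edge $x\to {}^*i(x)$ for each $x$, and Proposition \ref{prop:7} again shows that this edge is $(\Ce,\Ae)$-coCartesian. Any two left adjoints classify the same fibration, so they agree up to equivalence. Hence every left adjoint sends $x$ to the target of such an edge, and these targets are unique up to equivalence because initial objects are.

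The main obstacle is the identification $\chi(\Ce,\Ae)\cong\chi(i)$ with the correct variance. The paper's conventions (contravariant construction over $\text{N}(\mathcal C)^{\text{op}}$, and ``$G$ classifies'' meaning $G$ is left or right adjoint) must be matched carefully, and the check is easiest done simplex by simplex using fullness of $\Ae$. If the orientation comes out reversed, we would instead use the covariant $\Gamma(i)$ over the opposite edge; the argument is otherwise unchanged.
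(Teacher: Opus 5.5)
Your proposal is correct and is essentially the paper's argument. The paper also reads left admissibility as the contravariant Grothendieck construction $\chi(i)\to\Delta^1$ being coCartesian. It identifies $\pi$-coCartesian edges $x\to a$ with $(\Ce,\Ae)$-coCartesian edges by rewriting $\chi(i)_{e/}\to\chi(i)_{x/}\times_{\Delta^1_{0/}}\Delta^1_{\{0\to1\}/}$ as $\Ce_{e/}\times_\Ce\Ae\to\Ce_{x/}\times_\Ce\Ae$, and it gets the ``moreover'' clause from Example~\ref{ex:2}. The only difference is packaging: you identify $\chi(i)\cong\chi(\Ce,\Ae)$ and cite Proposition~\ref{prop:7}, which is the same slice computation.
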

\begin{proof}
	We show (a), the statement (b) is dual.
	
	Suppose that $\Ae \subset \Ce$ is left admissible. By definition, this means that the contravariant Grothendieck construction $\pi : \chi(i) \rightarrow \Delta^1$ associated to the inclusion $i : \Ae \rightarrow \Ce$ is a coCartesian fibration. Note that an edge $e$ of $\chi(i)$ covering the non-degenerate edge of $\Delta^1$ consists of objects $a \in \Ae, x \in \Ce$ together with an edge $x \rightarrow i(a) = a$ in $\Ce$. We identify $e$ with the corresponding edge $x \rightarrow a$. Then, $e$ is $\pi$-coCartesian if and only if the induced map
	\[
	\chi(i)_{e/} \longrightarrow \chi(i)_{x/} \times_{\Delta^1_{0/}}\Delta^1_{\{0\to 1\}/}
	\]
	is a trivial Kan fibration. Computing the slice categories, this map identifies with the map
	\[
	\Ce_{e/} \times_{\Ce} \Ae \longrightarrow \Ce_{x/} \times_{\Ce} \Ae
	\]
	which is a trivial Kan fibration if and only if $e$ defines an initial object in $\Ce_{c/} \times_{\Ce} \Ae$.
	
	The fact that any left adjoint $^*i$ to $i$ sends $x$ to the target of a $(\Ce,\Ae)$-coCartesian edge follows from Example \ref{ex:2}.
\end{proof}

The following characterizations justify our terminology.

\begin{prop}\cite[Prop. 2.3.2]{dyck_spherical}\label{prop:admiss}
	Let $\Ce$ be a stable $\infty$-category and $\Ae \subset \Ce$ a stable subcategory.
	\begin{enumerate}[label=(\alph*)]
		\item The pair $(\Ae, {^\perp\EuScript A})$ forms an SOD of $\Ce$ if and only if $\Ae \subset \Ce$ is left admissible.
		\item The pair $(\Ae^\perp,A)$ forms an SOD of $\Ce$ if and only if $\Ae \subset \Ce$ is right admissible.
	\end{enumerate}
\end{prop}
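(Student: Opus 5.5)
We prove (a); part (b) is formally dual, obtained by passing to opposite categories, which exchanges left and right orthogonals and Cartesian and coCartesian edges. The approach is to reduce everything to Proposition \ref{prop:admiss-cocart}(a) and Corollary \ref{prop:6}. The pair $(\Ae,{^\perp\Ae})$ is automatically orthogonal, since $\text{Map}_\Ce(b,a)$ is contractible for $b\in{^\perp\Ae}$ and $a \in \Ae$. It is also routine that ${^\perp\Ae}$ is a strictly full stable subcategory: $\text{Map}_\Ce(-,a)$ carries cofiber sequences to fiber sequences of spaces. So by Corollary \ref{prop:6}, $(\Ae,{^\perp\Ae})$ is an SOD if and only if $\Ae + {^\perp\Ae} = \Ce$. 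Equivalently, by essential surjectivity of $\xi$ in Proposition \ref{prop:5}(b), every $x\in\Ce$ sits in a cofiber sequence $b \to x \to a$ with $b \in {^\perp\Ae}$ and $a\in\Ae$. The orientation matches the diagram \eqref{eq:2} for $n=1$, where $X_{01}=a_1\in{^\perp\Ae}$ and $X_{12}=a_0\in\Ae$.

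For ``$\Leftarrow$'', assume $\Ae$ is left admissible. For $x \in \Ce$, Proposition \ref{prop:admiss-cocart}(a) gives a $(\Ce,\Ae)$-coCartesian edge $e: x\to a$. Let $b := \text{Fib}(e)$. We show $b\in{^\perp\Ae}$. For $a'\in\Ae$, applying $\text{Map}_\Ce(-,a')$ to the fiber sequence $b\to x\to a$ yields a fiber sequence of spaces
\[
\text{Map}_\Ce(a,a') \xrightarrow{e^*} \text{Map}_\Ce(x,a') \to \text{Map}_\Ce(b,a').
\]
The coCartesian property of $e$, namely that $e$ is initial in $\Ce_{x/}\times_\Ce\Ae$, says exactly that $e^*$ is a homotopy equivalence. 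The fibers of the undercategory projection are the mapping spaces, so initiality is equivalent to $e^*$ being an equivalence for every $a'$. Hence $\text{Map}_\Ce(b,a')$ is contractible. One point needs care: from the fiber sequence we only conclude that each loop space $\Omega^k\text{Map}(b,a')$ is contractible. To get contractibility of $\text{Map}(b,a')$ itself, we apply the same argument with $a'$ replaced by $a'[1]\in\Ae$ and use $\text{Map}(b,a')\simeq\Omega\text{Map}(b,a'[1])$. This gives $x \in \Ae + {^\perp\Ae}$.

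For ``$\Rightarrow$'', assume $(\Ae,{^\perp\Ae})$ is an SOD. Each $x$ then admits a cofiber sequence $b\to x\xrightarrow{e} a$ with $b\in{^\perp\Ae}$ and $a\in\Ae$. Running the same fiber sequence of mapping spaces backwards shows that $e^*:\text{Map}(a,a')\to\text{Map}(x,a')$ is an equivalence for all $a'\in\Ae$, because $\text{Map}(b,a')$ and $\text{Map}(b[1],a')$ are contractible. Thus $e$ is initial in $\Ce_{x/}\times_\Ce\Ae$, i.e.\ $(\Ce,\Ae)$-coCartesian, and Proposition \ref{prop:admiss-cocart}(a) gives left admissibility.

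The main obstacle is the identification between ``$e$ initial in $\Ce_{x/}\times_\Ce \Ae$'' and ``$e^*$ an equivalence on mapping spaces into every $a'\in\Ae$''. This is a standard fact, \cite[Prop. 1.2.12.4/4.2.1.5]{lurie-htt}-style, obtained by comparing the left fibration $\Ce_{x/}\to\Ce$ with its corepresenting object. I would cite it rather than reprove it, and the rest of the argument is then formal.
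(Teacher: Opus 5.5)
Your proof is correct and has the same skeleton as the paper's. In ``$\Rightarrow$'' both take the cofiber sequence $b\to x\xrightarrow{e}a$ supplied by the SOD and show that $e$ is $(\Ce,\Ae)$-coCartesian. In ``$\Leftarrow$'' both take a $(\Ce,\Ae)$-coCartesian edge from Proposition \ref{prop:admiss-cocart}(a) and show that its fiber lies in ${^\perp\Ae}$ (the paper uses the cofiber), with orthogonality automatic.

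The difference is in how this key step is checked. The paper stays inside slice categories. It uses that the biCartesian square $S^{\rhd}$ makes $\Ce_{S^{\rhd}/}\times_\Ce\Ae\to\Ce_{S/}\times_\Ce\Ae$ a trivial Kan fibration. It then pastes trivial fibrations and pulls back along sections until it reaches $\Ce_{e/}\times_\Ce\Ae\to\Ce_{x/}\times_\Ce\Ae$ (resp.\ a statement about $\Ce_{b/}\times_\Ce\Ae$). You instead convert initiality of $e$ into the condition that $e^*\colon\text{Map}_\Ce(a,a')\to\text{Map}_\Ce(x,a')$ is an equivalence for all $a'\in\Ae$. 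You then read everything off the fiber sequence obtained by applying $\text{Map}_\Ce(-,a')$ to the pushout square. The input is the same (the square is a colimit diagram), just expressed through mapping spaces.

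Your version buys several things.
\begin{enumerate}
\item It is shorter.
\item It isolates cleanly the pointwise fact that an edge $e\colon x\to a$ with $a\in\Ae$ is $(\Ce,\Ae)$-coCartesian if and only if $\text{Fib}(e)\in{^\perp\Ae}$. This is exactly what the paper later quotes from this proof in the reconstruction theorem.
\item It deals correctly with the $\pi_0$ issue via $\text{Map}_\Ce(b,a')\simeq\Omega\,\text{Map}_\Ce(b,a'[1])$.
\item It makes precise the step where the paper concludes $b\in{^\perp\Ae}$ from ``$\Ce_{b/}\times_\Ce\Ae$ is contractible''. Read literally, or as weak contractibility, this is too weak, since $b\to 0$ is always a terminal object there. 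What is needed is exactly your statement that every $\text{Map}_\Ce(b,a')$ is contractible, i.e.\ that the left fibration $\Ce_{b/}\times_\Ce\Ae\to\Ae$ is a trivial fibration.
\end{enumerate}

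The paper's route, for its part, uses nothing beyond Definition \ref{def:ab-cart} and the slice-category manipulations it reuses in Proposition \ref{prop:cart-admiss}. Yours rests on the standard identification of mapping spaces in $\Ce_{x/}$ with homotopy fibers of $e^*$, which you cite rather than prove.
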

\begin{proof}
We show statement (a), as the statement (b) is dual.

	Suppose that the pair $(\Ae, {^\perp\Ae})$ forms an SOD of $\Ce$. Let $x \in \Ce$ be an object. Then there exists a biCartesian square
	\[
	\begin{tikzcd}
	b \ar[r]\ar[d] & x \ar[d, "e"]\\
	0 \ar[r] & a
	\end{tikzcd}
	\]
	such that $a \in \Ae$ and $b \in {^\perp}\Ae$. We show that $e$ is $(\Ce,\Ae)$-coCartesian. Denoting the above square by $S^{\rhd}$ and the square without its lower right corner by $S$, the square $S^{\triangleright}$ being biCartesian is equivalent to the induced map
	\[
	\Ce_{S^{\rhd}/}\longrightarrow \Ce_{S/}
	\]
	being a trivial Kan fibration. This implies that the pullback
	\begin{equation}\label{eq:triv}
	\Ce_{S^{\rhd}/} \times_\Ce \Ae \longrightarrow \Ce_{S/} \times_\Ce 		\Ae
	\end{equation}
	is a trivial Kan fibration. Furthermore, since $b \in {^\perp}\Ae$, the map
	\[
	\Ce_{\{b\to 0\}/} \times_\Ce \Ae \longrightarrow \Ce_{b/} \times_\Ce 		\Ae
	\]
	is a trivial Kan fibration. This implies that the pullback
	\[
	\Ce_{S/} \times_\Ce \Ae \cong (\Ce_{\{b\to 0\}/} \times_\Ce \Ae) \times_{\Ce_{b/} \times_\Ce \Ae} (\Ce_{x/} \times_\Ce \Ae) \longrightarrow \Ce_{x/} \times_\Ce \Ae
	\]
    is trivial.
	We choose a section $s$ of this map. Then the pullback
	\[
	(\Ce_{S^{\rhd}/} \times_\Ce \Ae) \times_{\Ce_{S/} \times_\Ce 		\Ae} (\Ce_{x/} \times_\Ce \Ae) \longrightarrow \Ce_{x/} \times_\Ce \Ae
	\]
	of \ref{eq:triv} along $s$ is a trivial Kan fibration. This identifies with the map
	\[
	\Ce_{e/}\times_\Ce \Ae \longrightarrow \Ce_{x/} \times_\Ce \Ae
	\]
	which is a trivial Kan fibration if and only if $e$ is $(\Ce,\Ae)$-coCartesian. Alternatively, one can show that a left adjoint to the embedding $i' : \Ae \rightarrow \{\Ae, {^\perp\EuScript A}\}$ (which is identified with $i$ since $(\Ae,{^\perp\EuScript A})$ forms an SOD) is given by sending an arrow $a \to b$ to the object $a$.
	
	Conversely, suppose that $\Ae \subset \Ce$ is left admissible. Since $(\Ae, {^\perp\EuScript A})$ is orthogonal, it suffices to construct for every $x \in \Ce$, a biCartesian square
	\[
	\begin{tikzcd}
	x \ar[r]\ar[d] & 0 \ar[d]\\
	a \ar[r] & b
	\end{tikzcd}
	\]
	such that $a \in \Ae$ and $b \in {^\perp\EuScript A}$. Since $\Ae \subset \Ce$ is left admissible, there exists a $(\Ce,\Ae)$-coCartesian edge $e : x \rightarrow a$. Choosing a cofiber sequence associated to $e$, we have a square as above and it remains to show that $b := \text{Cof}(e) \in {^\perp\EuScript A}$. Denoting again by $S^{\rhd}$ the cofiber sequence and by $S$ the square without its lower right corner, we analogously deduce that the map
	\[
	\Ce_{S^{\rhd}/} \times_\Ce \Ae \longrightarrow \Ce_{S/} \times_\Ce \Ae
	\]
	is a trivial Kan fibration. However, since $e$ is $(\Ce,\Ae)$-coCartesian, it is immediate that $\Ce_{S/} \times_\Ce \Ae$ is contractible and thus, $\Ce_{S^{\rhd}/} \times_\Ce \Ae$ is contractible. The map
	\[
	\Ce_{S^{\rhd}/} \times_\Ce \Ae \longrightarrow \Ce_{b/} \times_\Ce \Ae
	\]
	is also a trivial Kan fibration and therefore, $\Ce_{b/} \times_\Ce \Ae$ is contractible which implies that $b \in {^\perp\EuScript A}$.
\end{proof}

\begin{rem}
Let $\Ce$ be a stable $\infty$-category equipped with an SOD $(\Ae_0,\dots,\Ae_n)$. Combining the above proposition with \ref{prop:inductive}, it follows that $\Ae_0 \subset \Ce$ is left admissible and $\Ae_n \subset \Ce$ is right admissible. However, for $1 \leq i \leq n$, the subcategories $\Ae_i \subset \Ce$ are in general not left admissible and similarly for $0 \leq i \leq n-1$, the subcategories $\Ae_i \subset \Ce$ are in general not right admissible. However, for a locally Cartesian or locally coCartesian SOD, the situation is more controllable. In fact, we have the following characterization.
\end{rem}

\begin{prop}\label{prop:cart-admiss}
Let $\Ce$ be a stable $\infty$-category equipped with an SOD $(\Ae_0,\dots,\Ae_n)$.
\begin{enumerate}[label=(\alph*)]
	\item $(\Ae_0,\dots,\Ae_n)$ is locally coCartesian if and only if for every $1 \leq i \leq n$, $\Ae_i \subset \Ce$ is left admissible.
	\item $(\Ae_0,\dots,\Ae_n)$ is locally Cartesian if and only if for every $0 \leq i \leq n-1$, $\Ae_i \subset \Ce$ is right admissible.
\end{enumerate}
\end{prop}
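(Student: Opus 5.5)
The proof will reduce both conditions to the existence of coCartesian edges, using Corollary \ref{cor:loccart} and Proposition \ref{prop:admiss-cocart}. I only treat (a), since (b) is dual: it uses Cartesian edges and right admissibility.

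By Corollary \ref{cor:loccart} (coCartesian version), $(\Ae_0,\dots,\Ae_n)$ is locally coCartesian if and only if the following holds: for all $i<j$ and every $a_i\in\Ae_i$ there is an $(\Ae_i,\Ae_j)$-coCartesian edge $a_i\to a_j$. By Proposition \ref{prop:admiss-cocart}(a), $\Ae_j$ is left admissible if and only if every $x\in\Ce$ admits a $(\Ce,\Ae_j)$-coCartesian edge $x\to a$.

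The implication ``$\Leftarrow$'' is immediate. Fix $i<j$, so $j\geq 1$ and $\Ae_j$ is left admissible, and let $a_i\in\Ae_i$. A $(\Ce,\Ae_j)$-coCartesian edge $e:a_i\to a$ is initial in $\Ce_{a_i/}\times_\Ce\Ae_j$. This is exactly the $\infty$-category appearing in Definition \ref{def:ab-cart} for the pair $(\Ae_i,\Ae_j)$, so $e$ is $(\Ae_i,\Ae_j)$-coCartesian.

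For ``$\Rightarrow$'', fix $1\le j\le n$. Let $\Ce'\subset\Ce$ be the full subcategory of objects $x$ that admit a $(\Ce,\Ae_j)$-coCartesian edge. Equivalently, the functor $\text{Map}_\Ce(x,-)\vert_{\Ae_j}$ is corepresentable by an object of $\Ae_j$. The plan is to show that $\Ce'$ is a stable subcategory containing $\Ae_0\cup\dots\cup\Ae_n$. Then $\Ce'\supset\Ae_0+\dots+\Ae_n=\Ce$ by Corollary \ref{prop:6}.

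Containment splits into three cases.
\begin{itemize}
\item For $k<j$, objects of $\Ae_k$ lie in $\Ce'$ by the locally coCartesian hypothesis.
\item Objects $a\in\Ae_j$ lie in $\Ce'$ via $\text{id}_a$.
\item For $k>j$ and $a_k\in\Ae_k$, orthogonality makes $\text{Map}_\Ce(a_k,-)\vert_{\Ae_j}$ contractible, so $a_k\to 0$ is coCartesian.
\end{itemize}
$\Ce'$ contains zero objects and is evidently closed under equivalences and shifts, since $\Ae_j$ is stable. So the real content is closure under cofibers.

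The main obstacle is this closure step. I would pass to mapping spectra $\text{map}_\Ce$, which is legitimate because everything is stable. Take a cofiber sequence $x'\to x\to x''$ with $x',x''\in\Ce'$, corepresented on $\Ae_j$ by $a',a''$. It gives a fiber sequence of exact functors on $\Ae_j$:
\[
\text{map}(x'',-)\to\text{map}(x,-)\to\text{map}(x',-).
\]
Equivalently, $\text{map}(x,-)$ is the fiber of a natural transformation $\text{map}(x',-)\to\text{map}(x''[-1],-)$. Both ends are corepresented on $\Ae_j$, by $a'$ and $a''[-1]$ respectively. By the Yoneda lemma in $\Ae_j$, this transformation is induced by a morphism $a''[-1]\to a'$ in $\Ae_j$. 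Its cofiber $a\in\Ae_j$ then corepresents the fiber, hence corepresents $\text{map}(x,-)\vert_{\Ae_j}$. I would also check that the resulting edge $x\to a$ is compatible with the induced map $x\to a$, so that it is genuinely coCartesian and not merely an abstract corepresentation. This uses the fully faithful Yoneda embedding of $\Ae_j^{\text{op}}$ into exact functors $\Ae_j\to\text{Sp}$, which preserves finite limits.
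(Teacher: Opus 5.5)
Your proof is correct, and for the hard direction it takes a genuinely different route from the paper. The easy direction, that a $(\Ce,\Ae_j)$-coCartesian edge out of $a_i\in\Ae_i$ is already $(\Ae_i,\Ae_j)$-coCartesian, matches the paper's argument.

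\textbf{The paper's route.} For locally coCartesian $\Rightarrow$ left admissible, the paper first does $n=1$ by hand. It writes $x=\text{Cof}(f\colon a\to b)$ with $a\in\Ae$ and $b\in\Be$, and factors $f\simeq h'\circ h$ through a coCartesian edge $h\colon a\to b'$. It then shows $x'=\text{Cof}(h)\in{}^\perp\Be$, and checks via slice categories that $x\to\text{Cof}(x'\to x)\simeq\text{Cof}(h')$ is $(\Ce,\Be)$-coCartesian. General $n$ is reduced to this case by left-mutating $\Ae_i$ successively past $\Ae_{i-1},\dots,\Ae_0$, using the coCartesian pairs $(\Ae_k,\Ae_i)$. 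Once $\Ae_i$ is the first component, Proposition~\ref{prop:admiss} finishes the proof.

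\textbf{Your route.} You prove once that the objects admitting an $\Ae_j$-reflection form a strictly full stable subcategory. You then check it on generators: $k<j$ by hypothesis, $k=j$ trivially, and $k>j$ by semiorthogonality.

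\textbf{Comparison.} For $n=1$, your cofiber step applied to $x=\text{Cof}(a\to b)$ returns exactly the paper's reflection $\text{Cof}(b'\to b)$, so locally the two arguments coincide. Your version treats all $n$ uniformly. It also avoids verifying that each mutated tuple is again an SOD, which the paper only asserts via Corollary~\ref{prop:6}. The paper's route is more explicit, and it produces the mutated decomposition with $\Ae_i$ in front as a byproduct.

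\textbf{Two minor points.} First, the spectral Yoneda lemma is more than you need. The universal property of $x''[-1]\to a''[-1]$ already gives $\phi\colon a''[-1]\to a'$ with a commuting square. Comparing the fiber sequences of mapping spaces into $b\in\Ae_j$ induced by the two cofiber sequences then shows the induced map $x\to\text{Cof}(\phi)$ is coCartesian. This is the mechanism of Lemma~\ref{lem:induced}, without its admissibility hypothesis. Second, your compatibility worry is moot: by Yoneda, any corepresentation of $\text{Map}_\Ce(x,-)\vert_{\Ae_j}$ is precomposition with a universal edge $x\to a$.
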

\begin{proof}
    We show (a), the statement (b) is dual.
    
    The only if direction is straightforward from Proposition \ref{prop:admiss-cocart} and Corollary \ref{cor:loccart}. We reduce the if direction to the case $n=1$ via the following argument: Let $1 \leq i \leq n$. Assuming the statement for $n=1$, it would follow that $\Ae_i$ is left admissible as a subcategory $\Ae_{i-1} + \Ae_i$, since the SOD $(\Ae_{i-1},\Ae_i)$ is coCartesian. Now let $^\perp\Ae_i$ be the left orthogonal of $\Ae_i$ in $\Ae_{i-1} + \Ae_i$, so that $(\Ae_i,^\perp \Ae_i)$ forms an SOD. Using $\ref{prop:6}$, it follows that
    \[
    (\Ae_1,\dots,\Ae_{i-2},\Ae_i,^\perp \Ae_i,\Ae_{i+1},\dots,\Ae_n)
    \]
    forms an SOD of $\Ce$. Repeating the same steps with the coCartesian SODs $(\Ae_{i-2},\Ae_i), (\Ae_{i-3},\Ae_i),\dots$, we obtain an SOD
    \[
    (\Ae_i,^\perp \Ae_1,\dots,^\perp \Ae_i,\Ae_{i+1},\dots,\Ae_n)
    \]
    of $\Ce$, implying that $\Ae_i \subset \Ce$ is left admissible.\\

    To prove the case $n=1$, let $(\Ae,\Be)$ be a coCartesian SOD of $\Ce$ and let $x \in \Ce$. We show that there exists a $(\Ce,\Be)$-coCartesian edge $e : x \rightarrow b$. Since $(\Ae,\Be)$ is an SOD, there exists a biCartesian square
	\[
	\begin{tikzcd}
	a \ar[r,"f"]\ar[d] & b \ar[d]\\
	0 \ar[r] & x
	\end{tikzcd}
	\]
	such that $a \in \Ae, b \in \Be$. Since $(\Ae,\Be)$ is coCartesian, there exists an $(\Ae,\Be)$-coCartesian edge $h : a \rightarrow b'$. Up to contractible choice, there exists a unique map $h' : b' \rightarrow b$ such that $h' \circ h \simeq g$. Defining $x' := \text{Cof}(h)$, we obtain a canonical map $j : x' \rightarrow x$ such that the diagram
	\[
	\begin{tikzcd}
	a \ar[r, "h"]\ar[d] & b' \ar[r,"h'"] \ar[d] & b \ar[d]\\
	0 \ar[r] & x' \ar[r, "j"] & x
	\end{tikzcd}
	\]
	commutes. Since the left square and the outer square are biCartesian, it follows that the right square is biCartesian. We now set $b^! := \text{Cof}(j)$ to obtain a commutative diagram
	\[
	\begin{tikzcd}
	a \ar[r, "h"]\ar[d] & b' \ar[r,"h'"] \ar[d] & b \ar[d,"g"]\\
	0 \ar[r] & x' \ar[d] \ar[r, "j"] & x \ar[d,"e"]\\
	& 0 \ar[r] & b^!
	\end{tikzcd}
	\]
	where all squares are biCartesian. We claim that $e$ is $(\Ce,\Be)$-coCartesian. We denote by $S^\rhd$ the upper left square and by $S$ the square $S^\rhd$ without its lower right corner. Further, we denote by $T^\rhd$ the lower right square and by $T$ the square $T^\rhd$ without its lower right corner. Then, the natural map
	\[
	\Ce_{S^\rhd/} \times_\Ce \Be \longrightarrow \Ce_{S/} \times_\Ce \Be
	\]
	is a trivial Kan fibration. However, $\Ce_{S/} \times_\Ce \Be$ is contractible since $h$ is $(\Ae,\Be)$-coCartesian. This implies that $\Ce_{S^\rhd/} \times_\Ce \Be$ is contractible and as the natural map
	\[
	\Ce_{S^\rhd/} \times_\Ce \Be \longrightarrow \Ce_{x'/} \times_\Ce \Be
	\]
	is a trivial Kan fibration, it follows that $\Ce_{x'/} \times_\Ce \Be$ is contractible. This is equivalent to the natural map
	\[
	\Ce_{\{x'\to 0\}/} \times_\Ce \Be \longrightarrow \Ce_{x'/} \times_\Ce \Be
	\]
	being a trivial Kan fibration. Pullback along a section of the natural map
	\[
	\Ce_{j/} \times_\Ce \Be \longrightarrow \Ce_{x/} \times_\Ce \Be
	\]
	yields that the natural map
	\[
	\Ce_{T/} \times_\Ce \Be \longrightarrow \Ce_{x/} \times_\Ce \Be
	\]
	is a trivial Kan fibration. We choose a section $s$ of this map. Then, pulling back the trivial Kan fibration
	\[
	\Ce_{T^\rhd/} \times_\Ce \Be \longrightarrow \Ce_{T/} \times_\Ce \Be
	\]
	along $s$, we obtain that the natural map
	\[
	(\Ce_{T^\rhd/} \times_\Ce \Be) \times_{\Ce_{T/} \times_\Ce \Be} (\Ce_{x/} \times_\Ce \Be) \longrightarrow \Ce_{x/} \times_\Ce \Be,
	\]
	which identifies with the natural map
	\[
	\Ce_{e/} \times_\Ce \Be \longrightarrow \Ce_{x/} \times_\Ce \Be,
	\]
	is a trivial Kan fibration. \qedhere
\end{proof}

\begin{notat}
Let $\Ce$ be a stable $\infty$-category equipped with an SOD $(\Ae_0,\dots,\Ae_n)$. To summarize our results, we introduce a simplicial set $\Pi(\Ae_0,\dots,\Ae_n)$ expanding $\chi(\Ae_0,\dots,\Ae_n)$: An $m$-simplex of $\Pi(\Ae_0,\dots,\Ae_n)$ consists of
\begin{enumerate}
	\item an $m$-simplex $\sigma : [m] \rightarrow [n+1]$ of $\Delta^{n+1}$,
	\item an $m$-simplex $\alpha : \Delta^m \rightarrow \Ce$ such that for every $0\leq i \leq n$, we have $\alpha\vert\sigma^{-1}(i+1) \subset \Ae_i$.
\end{enumerate}
We denote by $\pi : \Pi(\Ae_0,\dots,\Ae_n) \rightarrow \Delta^{n+1}$ the forgetful map to part (1) of the data. By construction $\pi^{-1}(i+1) = \Ae_i$ for $0 \leq i \leq n$ and $\pi^{-1}(0) = \Ce$. It is immediate that $\Pi(\Ae_0,\dots,\Ae_n)$ defines an $\infty$-category and thus, $\pi$ is an inner fibration. We have a canonical embedding \[\iota : \chi(\Ae_0,\dots,\Ae_n) \hookrightarrow \Pi(\Ae_0,\dots,\Ae_n)\] given by the assignment $(\sigma, \alpha) \longmapsto (\sigma',\alpha)$, where $\sigma'(k) := \sigma(k) + 1$. Moreover, we have $p = \pi \circ \iota$. There is also the forgetful map to part (2) of the data, denoted by $q : \Pi(\Ae_0,\dots,\Ae_n) \to \Ce$.
\end{notat}

The following summarizes the results of this subsection.

\begin{cor}
	Let $\Ce$ be a stable $\infty$-category equipped with an SOD $(\Ae_0,\dots,\Ae_n)$. Then $(\Ae_0,\dots,\Ae_n)$ is locally coCartesian if and only if $\pi$ is a locally coCartesian fibration. \qed
\end{cor}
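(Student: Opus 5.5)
The plan is to check the locally coCartesian condition for $\pi$ one edge of $\Delta^{n+1}$ at a time. By Definition \ref{defn:2}, $\pi$ is locally coCartesian if and only if, for every edge $k \to l$ of $\Delta^{n+1}$ with $k<l$ and every object $x \in \pi^{-1}(k)$, there is a locally $\pi$-coCartesian lift starting at $x$. The degenerate edges are handled by Example \ref{ex:1}, since identities serve as lifts. The nondegenerate edges split into two cases.

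\emph{Case $k \geq 1$.} Here the edge $\Delta^{\{k,l\}}$ lies in the image of $\Delta^n \hookrightarrow \Delta^{n+1}$, $i \mapsto i+1$. The pullback $\Pi(\Ae_0,\dots,\Ae_n)\times_{\Delta^{n+1}}\Delta^{\{k,l\}}$ is isomorphic, via $\iota$, to $\chi(\Ae_0,\dots,\Ae_n)\times_{\Delta^n}\Delta^{\{k-1,l-1\}}$. Both simplicial sets are described by the same data: a simplex of $\Ce$ whose vertices over $k$ lie in $\Ae_{k-1}$ and whose vertices over $l$ lie in $\Ae_{l-1}$. Consequently, existence of locally coCartesian lifts for these edges is precisely the condition that $p$ is locally coCartesian, i.e. that the SOD is locally coCartesian.

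\emph{Case $k=0$.} The pullback over $\Delta^{\{0,l\}}$ is $\chi(\Ce,\Ae_{l-1})\to\Delta^1$. Exactly as in the proof of Proposition \ref{prop:7}, with the pair $(\Ce,\Ae_{l-1})$ in place of $(\Ae,\Be)$, an edge $x\to a$ is coCartesian for this map if and only if it is $(\Ce,\Ae_{l-1})$-coCartesian. That proof only unravels slice categories and never uses orthogonality. Hence, by Proposition \ref{prop:admiss-cocart}(a), lifts exist for every $x\in\Ce$ if and only if $\Ae_{l-1}\subset\Ce$ is left admissible.

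Combining the two cases, $\pi$ is locally coCartesian if and only if the SOD is locally coCartesian and every $\Ae_i$ ($0\le i\le n$) is left admissible in $\Ce$. For the ``only if'' direction of the corollary nothing more is needed, because the edges with $k\ge1$ already force $p$ to be locally coCartesian. For the ``if'' direction, Proposition \ref{prop:cart-admiss}(a) gives left admissibility of $\Ae_i$ for $1\le i\le n$, and the remark preceding it, combining Proposition \ref{prop:admiss} with Proposition \ref{prop:inductive}, gives it for $\Ae_0$.

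The only step needing care is the identification of the pullbacks over edges with the simpler fibrations $\chi(\dots)$. That identification is a direct comparison of simplices: a simplex of $\Delta^{\{k,l\}}$ lifts to $\Pi$ exactly when its vertices land in the prescribed subcategories. Once it is in place, the rest is bookkeeping.
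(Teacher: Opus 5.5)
Your proof is correct and is exactly the argument the paper leaves implicit when it presents this corollary as a summary (it gives no proof): over edges $k\to l$ with $k\geq 1$ the map $\pi$ restricts to $p$, and over edges $0\to l$ it restricts to $\chi(\Ce,\Ae_{l-1})\to\Delta^1$, which is coCartesian exactly when $\Ae_{l-1}$ is left admissible by Propositions \ref{prop:7} and \ref{prop:admiss-cocart}; Proposition \ref{prop:cart-admiss}(a) together with the remark preceding it then supplies left admissibility of every $\Ae_i$. One cosmetic slip: your ``if'' and ``only if'' labels are swapped, since the edges with $k\geq 1$ give ``$\pi$ locally coCartesian $\Rightarrow$ SOD locally coCartesian'', which is the ``if'' direction; the mathematics of both directions is nevertheless right.
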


\paragraph{A reconstruction theorem.}\label{par:recons}
We prove a reconstruction theorem for locally coCartesian SODs.

\begin{defn}
Let $\Ce$ be a stable $\infty$-category equipped with a locally coCartesian semiorthogonal decomposition $(\EuScript A_0,\dots,\EuScript A_n)$.
We denote by \[\Se_\text{coCart} \subset \text{Fun}(\text{sd}([n]), \Pi(\Ae_0,\dots,\Ae_n))\]
the full subcategory of functors $S$ satisfying the following conditions:
\begin{enumerate}
	\item We have $S(\varnothing) \in \pi^{-1}(0) = \Ce$ and for every subdivision $n_0n_1\dots n_i$, $S(n_0n_1\dots n_i) \in \pi^{-1}(n_i+1)=\Ae_{n_i}$.
	\item For every $0 \leq i \leq n$ and a subdivision $n_0n_1\dots n_i$, the edge $S(n_0n_1\dots n_{i-1} \to n_0n_1\dots n_i)$ is locally $\pi$-coCartesian.
\end{enumerate}
We call $\Se_\text{coCart}$ the $\infty$-category of \textit{coCartesian subdivision cubes}.
\end{defn}

\begin{rem}
    The first condition above can also be formulated as $S$ being a functor over the diagram
    \[
    \text{sd}([n]) \longrightarrow \Delta^{n+1} \overset{\pi}\longleftarrow \Pi(\Ae_0,\dots,\Ae_n)
    \]
    where the functor $\text{sd}([n]) \to \Delta^{n+1}$ sends $\varnothing$ to $0$ and a subdivision $n_0 n_1\dots n_i$ to $n_i+1$.
\end{rem}

\begin{ex}
Let $\Xe$ be a stable $\infty$-category equipped with a locally coCartesian semiorthogonal decomposition $(\EuScript A,\Be,\Ce)$. Then an object of $\Se_\text{coCart}$ corresponds to a cube of the form
\[\begin{tikzcd}
	x && b \\
	& c && {c''} \\
	a && {b'} \\
	& {c'} && {c'''}
	\arrow["{!}", from=1-1, to=1-3]
	\arrow["{!}", from=1-1, to=2-2]
	\arrow["{!}"', from=1-1, to=3-1]
	\arrow["{!}", from=1-3, to=2-4]
	\arrow[from=1-3, to=3-3]
	\arrow[from=2-2, to=2-4]
	\arrow[from=2-2, to=4-2]
	\arrow[from=2-4, to=4-4]
	\arrow["{!}"{pos=0.4}, from=3-1, to=3-3]
	\arrow["{!}"', from=3-1, to=4-2]
	\arrow["{!}", from=3-3, to=4-4]
	\arrow[from=4-2, to=4-4]
\end{tikzcd}\]
where all edges marked by "$!$" are coCartesian in the appropiate sense and we have $x \in \Xe, a \in \Ae, b,b' \in \Be, c,c',c'',c''' \in \Ce$.

\end{ex}

\begin{prop}
	The functor
	\[
	\Se_\text{coCart} \longrightarrow \Ce
	\]
	evaluating at $\varnothing \in \text{sd}([n])$ is a trivial Kan fibration.
\end{prop}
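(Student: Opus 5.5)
The plan is to build $\Se_\text{coCart}$ from its value at $\varnothing$ by successive $\pi$-left Kan extensions along the cube $\text{sd}([n])\cong[1]^{n+1}$, and to apply Lemma~\ref{lem:relative} at each stage. The key observation is that locally $\pi$-coCartesian edges are detected as $\pi$-left Kan extensions over a one-point overcategory. Concretely, for a subdivision $f=n_0\dots n_i$ with predecessor $f'=n_0\dots n_{i-1}$, a functor $S$ extends over the edge $f'\to f$ with $S(f'\to f)$ locally $\pi$-coCartesian exactly when $S(f)$ is a $\pi$-colimit of the one-vertex diagram at $S(f')$ covering the edge $\pi S(f')\to n_i+1$. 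Note that $\pi S(f')$ is $0$ when $f'=\varnothing$.

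First I filter $\text{sd}([n])$ by the length of subdivisions. Let $P_k\subset\text{sd}([n])$ be the full subposet of subdivisions of length $\le k$, so $P_{-1}=\{\varnothing\}$. I would then like to show that each restriction
\[
\Se^{(k)}\longrightarrow\Se^{(k-1)}
\]
is a trivial Kan fibration, where $\Se^{(k)}$ denotes functors on $P_k$ over $\Delta^{n+1}$ satisfying conditions (1) and (2) on $P_k$. The difficulty is that $S$ on $P_k$ is \emph{not} a $\pi$-left Kan extension from $P_{k-1}$ over the full overcategory. For instance, $S(02)$ receives maps from $\varnothing$, $0$ and $2$, but only the edge from $0$ is required to be coCartesian. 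So I will not use the full-subcategory version directly.

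Instead I would use the order on the last vertex. A subdivision $f=n_0\dots n_i$ is determined by the pair $(f',n_i)$, and $\text{sd}([n])$ is a poset in which the maps into $f$ from elements not below $f'$ are the ones of the form $g\le f$ with $g$ ending in $n_i$. I would induct by adjoining one element $f$ at a time, in an order compatible with the partial order, and replace $P$ by the simplicial subset $Q\subset \text{N}\,\text{sd}([n])$ generated by the edges $f'\to f$ together with the already-added part. More precisely, write $\text{N}\,\text{sd}([n])$ as an iterated pushout. Over the sub-cube $\text{sd}(\{n_0,\dots,n_i\})$ with top vertex $f$, the inclusion of the sub-simplicial set missing $f$ into the full cube should be inner anodyne up to the single spine edge $f'\to f$. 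This is the standard fact that a cube retracts onto a maximal chain.

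Using this, I get a restriction map
\[
\Se_\text{coCart}\longrightarrow \text{Fun}_{\Delta^{n+1}}(\text{Sp},\Pi)^{\text{coCart}},
\]
where $\text{Sp}$ is the tree of spine edges $f'\to f$, one into each nonempty $f$, and the superscript means that all edges are locally coCartesian. I would show this map is a trivial fibration, since the inclusion $\text{Sp}\subset\text{N}\,\text{sd}([n])$ is inner anodyne. Here I am not sure of the claim for general $n$; one has to check that the inner horns needed lie over compatible simplices in $\Delta^{n+1}$, which is automatic since $\pi$ is an inner fibration. Then $\text{Sp}$ is a rooted tree at $\varnothing$. Evaluation at the root is a trivial fibration by repeatedly applying Lemma~\ref{lem:relative} to extend along a single leaf edge $\Delta^{\{0\}}\subset\Delta^1$, where the $\pi$-colimit exists precisely because $\pi$ is locally coCartesian (the Corollary preceding this paragraph). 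Both ``full subcategory spanned by Kan extensions'' conditions then match the locally coCartesian condition.

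The main obstacle is the inner-anodyne step, together with the compatibility of the fullness conditions when passing from the cube to the spine tree. Of the two, the inner-anodyne claim is the doubtful one; compatibility is only a matter of checking. If the inner-anodyne claim fails, I would instead induct on $n$, splitting $\text{sd}([n])=\text{sd}([n-1])\times[1]$ with the last coordinate indicating whether $n$ occurs. The face $\text{sd}([n-1])$ is handled by the induction hypothesis applied to the SOD $(\Ae_0,\dots,\Ae_{n-1})$ of $\Ae_0+\dots+\Ae_{n-1}$, with the map from $\Ce$ obtained via the coCartesian edge into that sum. The other face has all values in $\Ae_n$ and is obtained by a single $\pi$-left Kan extension, where each vertex $f n$ is a $\pi$-colimit of the single edge from $f$.
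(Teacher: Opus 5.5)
\textbf{Gap in the main route: the spine inclusion is not inner anodyne.} The inclusion $\text{Sp}\subset \text{N}\,\text{sd}([n])$ is not inner anodyne. It is not even a categorical equivalence. Already for $n=1$, $\text{Sp}$ consists of $\varnothing\to 0\to 01$ and $\varnothing\to 1$. Its homotopy category, the free category on this tree, has no morphism $1\to 01$, whereas $\text{sd}([1])$ has one. The same problem appears in your local version: in the cube minus its top vertex $f=n_0\dots n_i$ ($i\geq 1$), together with the spine edge $f'\to f$, there is no path from the vertex $n_i$ to $f$. So there is no ``cube retracts onto a maximal chain'' fact of this kind.

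Restricting to $\text{Sp}$ therefore forgets genuine data, such as the edge $S(1)\to S(01)$ and the $2$-simplex $\varnothing\to 1\to 01$. That data is redundant only because $S(\varnothing\to 1)$ is locally coCartesian. The missing $2$-simplex is a $\Lambda^2_0$-filler whose initial edge is coCartesian, so it comes from the universal property, not from inner horn filling. Once you use the coCartesian hypothesis to fill non-spine edges (i.e.\ relative left Kan extensions), the detour through $\text{Sp}$ becomes superfluous. A smaller issue: extending leaf by leaf via Lemma~\ref{lem:relative} over all of $\Delta^{n+1}$ requires $\pi$-colimits, i.e.\ $\pi$-coCartesian edges. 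A merely locally $\pi$-coCartesian edge whose target does not lie over the terminal vertex need not be one.

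\textbf{The fallback.} This is essentially the paper's argument, same splitting and same key step, but as stated the induction hypothesis does not apply to the face. Applying the proposition to the SOD $(\Ae_0,\dots,\Ae_{n-1})$ of $\Ae_0+\dots+\Ae_{n-1}$ gives cubes rooted in $\Ae_0+\dots+\Ae_{n-1}$. The $\text{sd}([n-1])$-face of an object of $\Se_\text{coCart}$ is instead rooted at $S(\varnothing)\in\Ce$, with $(\Ce,\Ae_{n_0})$-coCartesian edges out of the root. Comparing the two would need left admissibility of the sum, a composition argument for coCartesian edges, and another Kan extension step; you do not supply these.

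The paper avoids this by strengthening the inductive statement. It keeps the fiber $\Ce$ over $0$ and truncates only from above: $\Pi_i:=\pi^{-1}(\Delta^{\{0,\dots,i+1\}})$ and $\Se_i\subset\text{Fun}_{\Delta^{i+1}}(\text{sd}([i]),\Pi_i)$, cut out by the same conditions. This gives a chain $\Se_n\to\Se_{n-1}\to\dots\to\Se_{-1}=\Ce$, and no SOD of a subcategory is ever invoked. Each step is one application of Lemma~\ref{lem:relative} to $\text{sd}([i-1])\subset\text{sd}([i])$. For a new vertex $f'i$, the slice $\text{sd}([i-1])_{/f'i}$ has final object $f'$. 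Since $f'i$ lies over the terminal vertex $i+1$ of $\Delta^{i+1}$, a locally $\pi\vert_{\Pi_i}$-coCartesian edge $S(f')\to S(f'i)$ is $\pi\vert_{\Pi_i}$-coCartesian, hence a $\pi$-colimit. This also settles the locally-versus-globally coCartesian issue above. Your second half (each $fn$ a $\pi$-colimit of the single edge from $f$) is exactly this step.
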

\begin{proof}
For $-1 \leq i \leq n$, we define
\[\Pi_i := \pi^{-1}(\Delta^{\{0,\dots,i+1\}}) \subset \Pi(\Ae_0,\dots,\Ae_n).\] Then, $\pi$ restricts to a locally coCartesian fibration $\Pi_i \rightarrow \Delta^{i+1}$. We define an $\infty$-category \[\Se_i \subset \text{Fun}_{\Delta^{i+1}}(\text{sd}([i]),\Pi_i)\] as the full subcategory of functors $S$ over the diagram
	\[
	\text{sd}([i]) \longrightarrow \Delta^{i+1} \overset{\pi\vert_{\Pi_i}}\longleftarrow \Pi_i
	\]
	satisfying the following condition: For every $0 \leq j \leq i$ and a subdivision $n_0n_1 \dots n_j$, the edge $S(n_0n_1 \dots n_{j-1} \to n_0n_1 \dots n_j)$ is locally $\pi\vert_{\Pi_i}$-coCartesian.
    
    Note that we allow $i = -1$, which we interpret as $\Se_{-1} = \text{Fun}_{\Delta^0}(\text{sd}(\varnothing),\pi^{-1}(0)) = \Ce$. Then, the natural inclusions
    \[
    \text{sd}(\varnothing) \subset \text{sd}([0]) \subset \dots \subset \text{sd}([n])
    \]
    given by postcomposition with the natural inclusions $[i] \hookrightarrow [i+1]$ induce a chain of $\infty$-categories
	\begin{equation*}
	\Se_\text{coCart} = \Se_{n} \longrightarrow \Se_{n-1} \longrightarrow \dots \longrightarrow \Se_{0} \longrightarrow \Se_{-1} = \Ce.
	\end{equation*}
    We wish to show that every functor in this chain is a trivial Kan fibration.\\

	Let $0 \leq i \leq n$. Applying Lemma \ref{lem:relative}, we deduce that the restriction functor
	\[
	\text{Fun}'_{\Delta^{i+1}}(\text{sd}([i]), \Pi_i) \longrightarrow \text{Fun}'_{\Delta^{i+1}}(\text{sd}([i-1]), \Pi_i)
	\]
	is a trivial Kan fibration, where
	\begin{itemize}
		\item $\text{Fun}'_{\Delta^{i+1}}(\text{sd}([i]), \Pi_i) \subset \text{Fun}_{\Delta^{i+1}}(\text{sd}([i]), \Pi_i)$ is the full subcategory of functors $S$ which are $\pi\vert_{\Pi_i}$-left Kan extended from $S\vert_{\text{sd}([i-1])}$,
		\item $\text{Fun}'_{\Delta^{i+1}}(\text{sd}([i-1]), \Pi_i) \subset \text{Fun}_{\Delta^{i+1}}(\text{sd}([i-1]), \Pi_i)$ is the full subcategory of functors $S$ such that for every $f \in \text{sd}([i])$, the induced diagram
		\[
		\text{sd}([i-1])_{/f} \longrightarrow \Pi_i
		\]
		admits a $\pi\vert_{\Pi_i}$-colimit.
	\end{itemize}
	Notice that for any $n_0 n_1 \dots n_j \in \text{sd}([i])\setminus \text{sd}([i-1])$, the slice category $\text{sd}([i-1])_{/n_0 n_1 \dots n_j}$ has the final object $n_0 n_1 \dots n_{j-1} \to n_0 n_1 \dots n_j$. Given $S \in \text{Fun}_{\Delta^{i+1}}(\text{sd}([i-1]), \Pi_i)$, it follows from cofinality that the induced diagram
    \[
    \text{sd}([i-1])_{/n_0 n_1 \dots n_j} \longrightarrow \Pi_i
    \]
    admits a $\pi\vert_{\Pi_i}$-colimit if and only if there exists a $\pi\vert_{\Pi_i}$-coCartesian edge $S(n_0 n_1 \dots n_{j-1}) \to x$. For the latter, we can choose a $\pi\vert_{\Pi_i}$-coCartesian edge $S(n_0 n_1 \dots n_{j-1}) \to x$ with $\pi(x) = i+1$, which is also $\pi\vert_{\Pi_i}$-coCartesian since the only morphism $i+1 \to x$ in $\Delta^{i+1}$ is the identity on $i+1$. We deduce that in fact
    \[
    \text{Fun}'_{\Delta^{i+1}}(\text{sd}([i-1]), \Pi_i) =\text{Fun}_{\Delta^{i+1}}(\text{sd}([i-1]), \Pi_i).
    \]
    We leave it to the reader to verify that we have a pullback diagram of simplicial sets:
    \[\begin{tikzcd}
	{\Se_i} & {\text{Fun}'_{\Delta^{i+1}}(\text{sd}([i]),\Pi_i)} \\
	{\Se_{i-1}} & {\text{Fun}_{\Delta^{i+1}}(\text{sd}([i-1]),\Pi_i)}
	\arrow[hook, from=1-1, to=1-2]
	\arrow[from=1-1, to=2-1]
	\arrow[from=1-2, to=2-2]
	\arrow[hook, from=2-1, to=2-2]
\end{tikzcd}\]
Thus, the restriction $\Se_i \to \Se_{i-1}$ is a trivial Kan fibration.
\end{proof}

We will now consider the $\infty$-category 
\[\Le := \text{oplax-lim}(p) \subset \text{Fun}_{\Delta^n}(\text{sd}([n])\setminus \varnothing,\chi(\Ae_0,\dots,\Ae_n)).\] Note that we can write
\[
\text{Fun}_{\Delta^n}(\text{sd}([n])\setminus \varnothing,\chi(\Ae_0,\dots,\Ae_n)) = \text{Fun}_{\Delta^{n+1}}(\text{sd}([n])\setminus \varnothing,\Pi(\Ae_0,\dots,\Ae_n)).
\]
Therefore, we have a well-defined restriction functor
	\[
	\Se_\text{coCart} \longrightarrow \Le.
	\]
	Our next goal is to show that this defines a trivial Kan fibration. To this end, we first need a few preliminaries.

\begin{lem}\label{lem:underlying}
	The functor $S \in \text{Fun}_{\Delta^{n+1}}(\text{sd}([n]), \Pi(\Ae_0,\dots,\Ae_n))$ is a $\pi$-limit diagram if and only if $q \circ S \in \text{Fun}(\text{sd}([n]), \Ce)$ is a limit diagram.
\end{lem}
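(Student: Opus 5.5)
The statement concerns a diagram $\text{sd}([n]) = \text{sd}([n])^{\varnothing}$ with cone point $\varnothing$, sitting over $\Delta^{n+1}$ with $\varnothing \mapsto 0$. So "$\pi$-limit diagram" means: $S$ exhibits $S(\varnothing)$ as a $\pi$-limit of $S\vert_{\text{sd}([n])\setminus\varnothing}$. The plan is to compare $\pi$-limits with ordinary limits in $\Ce$ through the forgetful functor $q$. The key structural fact is that $\Pi := \Pi(\Ae_0,\dots,\Ae_n)$ is, as a simplicial set, the fiber product
\[
\Pi \cong \Delta^{n+1} \times_{\Delta^{n+1}\times \Ce} (\Delta^{n+1}\times\Ce)' ,
\]
or more concretely: $(\pi,q) : \Pi \to \Delta^{n+1}\times \Ce$ is the inclusion of a full subcategory. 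This holds because a simplex of $\Pi$ is a pair $(\sigma,\alpha)$ subject only to conditions on vertices, namely $\alpha(k)\in\Ae_{\sigma(k)-1}$ when $\sigma(k)\geq 1$.

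The first step is to use \cite[Prop. 4.3.1.5]{lurie-htt}, or the definition of relative limits directly. $S$ is a $\pi$-limit diagram iff for every object $y \in \Pi$ the square of mapping spaces
\[
\begin{tikzcd}
\text{Map}_\Pi(y,S(\varnothing)) \ar[r]\ar[d] & \lim_{f\neq\varnothing}\text{Map}_\Pi(y,S(f)) \ar[d]\\
\text{Map}_{\Delta^{n+1}}(\pi y,0) \ar[r] & \lim_{f\neq\varnothing}\text{Map}_{\Delta^{n+1}}(\pi y,\pi S(f))
\end{tikzcd}
\]
is a homotopy pullback. Since $(\pi,q)$ is fully faithful, $\text{Map}_\Pi(y,z) \simeq \text{Map}_{\Delta^{n+1}}(\pi y,\pi z)\times \text{Map}_\Ce(qy,qz)$. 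The product with the $\Delta^{n+1}$-factor then splits off: the square becomes the product of the bottom row with the square of $\Ce$-mapping spaces. (The bottom-row spaces are discrete, either empty or a point, and the $\Ce$-part appears as a fiber of the vertical maps.)

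Next comes a case distinction on $\pi(y)$. If $\pi(y)=0$, the bottom row consists of contractible spaces, since $0$ maps uniquely to everything. The condition then becomes: $\text{Map}_\Ce(qy,qS(\varnothing)) \to \lim_f \text{Map}_\Ce(qy,qS(f))$ is an equivalence. As $\pi^{-1}(0)=\Ce$ and $q$ restricts to the identity there, $qy$ ranges over all of $\Ce$, so this family of conditions is exactly that $q\circ S$ is a limit diagram. If $\pi(y)=k\geq 1$, then $\text{Map}_{\Delta^{n+1}}(k,0)=\varnothing$, and I will check that the square is automatically a pullback. The hard part is showing that the bottom-right limit is also empty. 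For that I will use that $\{f \mid \pi S(f) \geq k\}$ is a proper subset, e.g. it omits the vertex $0\in\text{sd}([n])$ when $k\geq 2$. More generally, the limit over $\text{sd}([n])\setminus\varnothing$ of the discrete diagram $f\mapsto\text{Hom}(k,n_{\max}(f)+1)$ is empty as soon as some $f$ has $n_{\max}(f)+1<k$, and $f=0$ with $k\geq 2$ does this. The borderline case $k=1$ is the main obstacle: there every target is reachable, and the bottom-right corner is a point.

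For $k=1$, i.e. $y\in\Ae_0$, I would reduce to the case already established. Once $q\circ S$ is a limit, $\text{Map}_\Ce(qy, qS(\varnothing))\simeq\lim_f\text{Map}(qy,qS(f))$ for all $y$, in particular for those in $\Ae_0$. So for the implication "$q\circ S$ limit $\Rightarrow$ $\pi$-limit" the square is a pullback in this case too, since both vertical fibers agree. The converse uses only the case $\pi(y)=0$, where $\Ce$ is exhausted. The residual worry is whether the $k=1$ square really has an empty top-left corner. It does not: $\text{Map}_\Pi(y,S(\varnothing))$ is empty since there are no maps $1\to 0$, while the top-right corner may be nonempty. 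If that breaks the argument, I would instead verify the $\pi$-limit property in Lurie's form, Prop. 4.3.1.9–10 style, only for objects over $0$. Alternatively, I would note that the intended notion of $\pi$-limit in Lemma \ref{lem:relative} concerns cones over $\text{sd}([n])$ whose cone point lies over $0$, which is the initial object of $\Delta^{n+1}$. By \cite[Prop. 4.3.1.5(2)]{lurie-htt}, for a cone point over an initial object of the base, $\pi$-limits are detected fiberwise in $\pi^{-1}(0)=\Ce$ after composing with $q$.
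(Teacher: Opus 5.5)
Your overall strategy works: the mapping-space criterion for relative limits, plus the fact that $(\pi,q)\colon \Pi\to\Delta^{n+1}\times\Ce$ is a full inclusion (the conditions on simplices of $\Pi$ are only vertex conditions), so that $\text{Map}_\Pi$ splits as $\text{Map}_{\Delta^{n+1}}\times\text{Map}_\Ce$. Your treatment of test objects $y$ over $0$ is correct.

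The gap is the case $\pi(y)=k\geq 1$. As written, the implication ``$q\circ S$ limit $\Rightarrow$ $S$ is a $\pi$-limit'' is not established for $k=1$. The justification ``both vertical fibers agree'' does not hold, since there is no point of the bottom-left corner to take a fiber over. You then worry that an empty top-left corner with a nonempty top-right corner breaks the argument, and fall back on vaguely cited alternatives.

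The missing observation is that emptiness of the bottom-right corner is irrelevant. Write the square as $A\times X\to B\times Y$ over $A\to B$, with $A=\text{Map}_{\Delta^{n+1}}(k,0)$ and $B=\lim_f \text{Map}_{\Delta^{n+1}}(k,\pi S(f))$. It is homotopy Cartesian iff the induced map $A\times X\to A\times_B(B\times Y)=A\times Y$ is an equivalence. For $k\geq 1$ one has $A=\varnothing$, so both sides are empty and the square is a homotopy pullback, whatever $B$, $X$, $Y$ are. Hence only objects over $0$ impose a condition, and that condition is exactly that $q\circ S$ is a limit diagram in $\Ce=\pi^{-1}(0)$; this gives both directions at once.

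Your fallbacks are therefore unnecessary, and the heuristic behind the last one is off. What makes objects over $k\geq 1$ irrelevant is that $\text{Hom}_{[n+1]}(k,0)=\varnothing$, not merely that $0$ is initial. In a pointed $\infty$-category, for instance, every object maps to the initial object.

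With this patch, your argument is a valid proof by a different route from the paper's. The paper never passes to mapping spaces. Since $\pi S(\varnothing)=0$ is the minimum of $[n+1]$, the slice $\Delta^{n+1}_{/\pi\circ S}$ is a point. So the map $\Pi_{/S}\to\Pi_{/\widetilde S}\times_{\Delta^{n+1}_{/\pi\circ\widetilde S}}\Delta^{n+1}_{/\pi\circ S}$ defining a $\pi$-limit becomes $\Pi_{/S}\to\Pi_{/\widetilde S}\times_{\Pi}\pi^{-1}(0)$. Because there are no vertex conditions over $0$, this is isomorphic as a map of simplicial sets to $\Ce_{/q\circ S}\to\Ce_{/q\circ\widetilde S}$. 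Cone points are forced to lie over $0$, so objects over $k\geq 1$ never enter and no case distinction arises.

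Your version replaces this strict slice computation with the homotopy-pullback criterion. It costs that extra cited criterion but makes explicit which test objects carry content. Both arguments rest on the same two facts: $\Pi$ is a full subcategory of $\Delta^{n+1}\times\Ce$ with $\pi^{-1}(0)=\Ce$, and nothing in $[n+1]$ other than $0$ maps to $0$.
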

\begin{proof}
	We set $\widetilde{S} := S\vert_{\text{sd}([n])\setminus \varnothing}$. The functor $S$ is a $\pi$-limit diagram if and only if the natural map
	\[
	\Pi(\Ae_0,\dots,\Ae_n)_{/S} \longrightarrow \Pi(\Ae_0,\dots,\Ae_n)_{/\widetilde{S}} \times_{\Delta^{n+1}_{/\pi \circ \widetilde{S}}} \Delta^{n+1}_{/\pi \circ S}
	\]
	is a trivial Kan fibration. Using the fact that $\Delta^{n+1}_{/\pi \circ S}$ consists of a single object $f : \text{sd}([n]) \to \Delta^{n+1}$ with $f(\varnothing) = 0$, we compute
    \begin{align*}
        \Pi(\Ae_0,\dots,\Ae_n)_{/\widetilde{S}} \times_{\Delta^{n+1}_{/\pi \circ \widetilde{S}}} \Delta^{n+1}_{/\pi \circ S} &\cong \Pi(\Ae_0,\dots,\Ae_n)_{/\widetilde{S}} \times_{\Delta^{n+1}} \{0\}\\
        &=\Pi(\Ae_0,\dots,\Ae_n)_{/\widetilde{S}} \times_{\Pi(\Ae_0,\dots,\Ae_n)} \pi^{-1}(0).
    \end{align*}
    It is left to the reader that we have identifications
    \[
    \Pi(\Ae_0,\dots,\Ae_n)_{/S} \cong \Ce_{/q\circ S}, \quad \Pi(\Ae_0,\dots,\Ae_n)_{/\widetilde{S}} \times_{\Pi(\Ae_0,\dots,\Ae_n)} \pi^{-1}(0) \cong \Ce_{/q\circ \widetilde{S}}.
    \]
    Thus, the map above identifies with the map
	\[
	\Ce_{/q\circ S} \longrightarrow \Ce_{/q \circ \widetilde{S}}
	\]
	which is a trivial Kan fibration if and only if $q \circ S$ is a limit diagram.
\end{proof}

\begin{lem}\label{lem:induced}
	Let $\Ce$ be a stable $\infty$-category and $\Ae \subset \Ce$ a stable subcategory which is left admissible. Suppose we are given a commutative square
	\[
	\begin{tikzcd}
		x \ar[r] \ar[d,"f"] & a \ar[d,"g"]\\
		x' \ar[r,"!"] & a'
	\end{tikzcd}
	\]
	where the lower horizontal map is $(\Ce,\Ae)$-coCartesian. Then, the upper horizontal map is $(\Ce,\Ae)$-coCartesian if and only if the induced map
	\[
	\text{Cof}(f) \longrightarrow \text{Cof}(g)
	\]
	is $(\Ce,\Ae)$-coCartesian.
\end{lem}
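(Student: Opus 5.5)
We reduce the claim to a statement about the left adjoint. Since $\Ae \subset \Ce$ is left admissible, choose a left adjoint $L := {}^*i : \Ce \to \Ae$ with unit $\eta : \mathrm{id}_\Ce \Rightarrow i L$. By Proposition \ref{prop:admiss-cocart}(a) and Example \ref{ex:2}, an edge $u : y \to b$ with $b \in \Ae$ is $(\Ce,\Ae)$-coCartesian if and only if the induced map $L(y) \to b$ (the adjunct of $u$) is an equivalence. Equivalently, $u$ is $(\Ce,\Ae)$-coCartesian if and only if it factors as $y \xrightarrow{\eta_y} L(y) \xrightarrow{\sim} b$. By Corollary \ref{cor:adjunction}, $L$ is exact. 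Throughout, we use that $\Ae$ is a stable subcategory, so $\text{Cof}(g)$ lies in $\Ae$ and the statement makes sense.

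Next we extend the given square. Taking cofibers of the vertical maps produces a diagram $\Delta^1 \times \Delta^2 \to \Ce$ whose rows are $x \to x' \to \text{Cof}(f)$ and $a \to a' \to \text{Cof}(g)$, and whose vertical maps are the top edge $u : x \to a$, the bottom edge $u' : x' \to a'$, and the induced map $c : \text{Cof}(f) \to \text{Cof}(g)$. Here $\text{Cof}$ is a functor on $\text{Fun}(\Delta^1,\Ce)$, and we apply it to the morphism $f \to g$ in $\text{Fun}(\Delta^1,\Ce)$ given by the square. Applying $L$ and taking adjuncts, the naturality of $\eta$ gives a morphism of cofiber sequences in $\Ae$:
\[
\bigl(L x \to L x' \to L\,\text{Cof}(f)\bigr) \longrightarrow \bigl(a \to a' \to \text{Cof}(g)\bigr),
\]
whose components are the adjuncts $\bar u$, $\bar u'$, $\bar c$. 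The left-hand row is a cofiber sequence because $L$ is exact, so $L\,\text{Cof}(f) \simeq \text{Cof}(Lf)$. The key check is that the third component agrees with $\text{Cof}$ of the morphism of arrows $(\bar u, \bar u')$. This follows because both are obtained by applying the functor $\text{Cof}$ to the morphism $Lf \to g$ in $\text{Fun}(\Delta^1,\Ae)$, using the adjunction $\text{Fun}(\Delta^1,L) \dashv \text{Fun}(\Delta^1,i)$ and exactness of $L$.

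Finally we apply the two-out-of-three property. By hypothesis $\bar u'$ is an equivalence. In a stable $\infty$-category, a morphism of cofiber sequences whose middle component is an equivalence has an equivalence as its left component if and only if its right component is an equivalence. This follows from Lemma \ref{lem:5}, or from the five lemma in $\text{h}\Ae$ applied to the long exact sequences after rotating. Hence $\bar u$ is an equivalence if and only if $\bar c$ is, which is the claim.

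The main obstacle is making the identification of $\bar c$ with the cofiber of $(\bar u,\bar u')$ coherent rather than merely homotopy-commutative on objects. We would handle it by working in $\text{Fun}(\Delta^1,\Ce)$: the square is a morphism $\phi : f \to g$ there, the adjunct of $\phi$ is a morphism $Lf \to g$ in $\text{Fun}(\Delta^1,\Ae)$, and $\text{Cof}$ commutes with $L$ up to natural equivalence. An alternative that avoids adjoints is to argue with the slice categories $\Ce_{y/}\times_\Ce\Ae$ in the style of the proof of Proposition \ref{prop:cart-admiss}, but the adjoint route seems cleaner.
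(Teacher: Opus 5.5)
Your proposal is correct and follows essentially the same route as the paper. Both arguments factor the horizontal edges through the left adjoint ${}^*i$ and use exactness of ${}^*i$ to get a morphism of cofiber sequences with components $h, h', h''$, where $h'$ is an equivalence, and then conclude that $h$ is an equivalence iff $h''$ is. The paper phrases this last step via the induced cofiber sequence $\text{Cof}(h)\to \text{Cof}(h') = 0\to\text{Cof}(h'')$, you via a two-out-of-three/five-lemma argument, and your identification of the adjunct of the induced map with $\text{Cof}$ of the morphism $Lf\to g$ in $\text{Fun}(\Delta^1,\Ae)$ makes explicit a coherence point the paper leaves implicit.
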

\begin{proof}
We choose a left adjoint $^*i$ to the embedding $i : \Ae \to \Ce$ and $(\Ce,\Ae)$-coCartesian edges $x \to {^*i}(x), x' \to {^*i}(x'), x'' \to {^*i}(x'')$ where $x'' := \text{Cof}(f)$ (which exist by Proposition \ref{prop:admiss-cocart}). We obtain an induced diagram
	\[
	\begin{tikzcd}
		x \ar[r,"!"] \ar[d,"f"] & {^*i}(x) \ar[d] \ar[r,"h"] & a \ar[d,"g"]\\
		x' \ar[d] \ar[r,"!"] & {^*i}(x') \ar[d] \ar[r,"h'"] & 			a' \ar[d]\\
	x'' \ar[r,"!"] & {^*i}(x'') \ar[r,"h''"] & a''
	\end{tikzcd}
	\]
	where $a'' := \text{Cof}(g)$ and $h'$ is an equivalence. Since $^*i$ is exact (see \ref{cor:adjunction}), all three vertical sequences of the diagram are cofiber sequences. We obtain an induced cofiber sequence
	\[
	\text{Cof}(h) \to \text{Cof}(h') = 0 \to \text{Cof}(h''). 
	\]
	It follows that $\text{Cof}(h)$ is a zero object if and only if $\text{Cof}(h'')$ is a zero object. This is equivalent to our original claim.
\end{proof}

\begin{rem}
    Let $i \geq 0$ be an integer. By virtue of Example \ref{ex:sdcube}, we can view $S \in \text{Fun}(\text{sd}([i]),\Ce)$ as a functor $[1]^{i+1} \to \Ce$. Via the construction of Remark \ref{rem:cofcube}, we can associate to this the functor $\text{Cof}(S) : [1]^i \to \Ce$ corresponding to a functor $\text{sd}([i-1]) \to \Ce$. Explicitly, this functor sends a subdivision $n_0 n_1 \dots n_{j} \in \text{sd}([i-1])$ to the cofiber of
    \[
    S(m_0m_1\dots m_j \to 0m_0m_1 \dots m_j)
    \]
    where $m_k = n_k +1$.
\end{rem}

\begin{thm}
	The restriction functor
	\[
	\Se_\text{coCart} \longrightarrow \Le
	\]
	is a trivial Kan fibration.
\end{thm}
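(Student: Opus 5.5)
The plan is to realize $\Se_\text{coCart}$ as a category of $\pi$-right Kan extensions and then apply the dual of Lemma \ref{lem:relative}. Set $\Pi := \Pi(\Ae_0,\dots,\Ae_n)$ and consider the diagram $\text{sd}([n]) \to \Delta^{n+1} \overset{\pi}\leftarrow \Pi$ together with the full subcategory $\text{sd}([n])\setminus\varnothing$. For every nonempty subdivision $f$, the undercategory $(\text{sd}([n])\setminus\varnothing)_{f/}$ has the initial object $\text{id}_f$, so the Kan extension condition is automatic there. The only nontrivial point is $\varnothing$, where the relevant diagram is $\widetilde S := S\vert_{\text{sd}([n])\setminus\varnothing}$ itself.

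By Lemma \ref{lem:underlying}, $S$ is a $\pi$-limit diagram if and only if $q\circ S$ is a limit diagram in $\Ce$. Since $\Ce$ is stable and the diagram is finite, such a limit always exists and lies in $\pi^{-1}(0)=\Ce$, so every $\widetilde S$ admits a $\pi$-limit. The dual of Lemma \ref{lem:relative} therefore gives a trivial Kan fibration
\[
\text{Fun}'_{\Delta^{n+1}}(\text{sd}([n]),\Pi) \longrightarrow \text{Fun}_{\Delta^{n+1}}(\text{sd}([n])\setminus\varnothing,\Pi)
\]
from the $\pi$-right Kan extensions. Pulling back along the full inclusion $\Le \subset \text{Fun}_{\Delta^{n+1}}(\text{sd}([n])\setminus\varnothing,\Pi)$, it then suffices to prove the following key claim. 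For $S$ with $\widetilde S \in \Le$, the diagram $q\circ S$ is a limit (equivalently, a Cartesian $(n+1)$-cube via Example \ref{ex:sdcube}) if and only if each edge $S(\varnothing)\to S(j)$ is $(\Ce,\Ae_j)$-coCartesian. The edges $\varnothing\to j$ are exactly the extra condition distinguishing $\Se_\text{coCart}$ from $\Le$; all other edges in the definition of $\Se_\text{coCart}$ are already constrained by $\Le$. By Proposition \ref{prop:7}, locally $\pi$-coCartesian is the same as $(\Ce,\Ae_j)$-coCartesian here. Granting the claim, $\Se_\text{coCart}$ is exactly this pullback, so the restriction $\Se_\text{coCart}\to\Le$ is a trivial Kan fibration.

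I would prove the key claim by induction on $n$. For $n=0$ the cube is the single edge $S(\varnothing)\to S(0)$, and being Cartesian in a stable category means being an equivalence. Because $\Ae_0$ is left admissible and $S(0)\in\Ae_0$, this matches being coCartesian only up to a further argument, so even the base case needs care. I would split the cube along the direction of the last vertex $n$, writing it as a morphism between the face $\text{sd}([n-1])$ and the face of subdivisions ending in $n$. The edges $m\to mn$ are coCartesian into $\Ae_n$, and the face ending in $n$ takes values in $\Ae_n$ with coCartesian edges given by $\Le$. Using Lemma \ref{lem:induced} inductively across squares whose bottom edge is coCartesian, together with the fact that ${}^*i$ is exact (Corollary \ref{cor:adjunction}), I would reduce Cartesianness of the whole cube to Cartesianness of the face $\text{sd}([n-1])$, which is an object of the analogous category for the SOD $(\Ae_0,\dots,\Ae_{n-1})$ of $\Ae_0+\dots+\Ae_{n-1}$. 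The induction hypothesis then applies to that face. As an alternative reduction I would consider passing to $\text{Cof}(S)$ as in the remark preceding the theorem, combined with Lemma \ref{lem:1}.

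The main obstacle is precisely this inductive bookkeeping. The edges adding a first element ($m\to 0m$) are not coCartesian, so one must choose the splitting direction carefully, presumably the last coordinate. One must also check that a limit of the $\Ae_n$-valued face computed in $\Ce$ interacts correctly with the coCartesian edges, that is, that ${}^*i_{\Ae_n}$ applied to the limit agrees with the corresponding vertex. I would first test the case $n=1$, using semiorthogonality to identify Cartesian squares with the required coCartesian property, before attempting the general induction.
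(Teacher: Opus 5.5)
\textbf{There is a genuine gap.} Your outer reduction matches the paper's. The dual of Lemma \ref{lem:relative} identifies $\pi$-limit diagrams on $\text{sd}([n])$ with all of $\text{Fun}_{\Delta^{n+1}}(\text{sd}([n])\setminus\varnothing,\Pi)$. Everything then rests on your key claim: for $S$ with $S\vert_{\text{sd}([n])\setminus\varnothing}\in\Le$, the cube $q\circ S$ is Cartesian if and only if every edge $S(\varnothing)\to S(j)$ is $(\Ce,\Ae_j)$-coCartesian. That claim is the entire content of the theorem, and the induction you sketch for it fails as stated.

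\emph{The face is the wrong object.} The face $S\vert_{\text{sd}([n-1])}$ is not an object of the analogous category for the SOD $(\Ae_0,\dots,\Ae_{n-1})$ of $\Ae_0+\dots+\Ae_{n-1}$. Its initial vertex $S(\varnothing)$ is an arbitrary object of $\Ce$, so the induction hypothesis does not apply to it.

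\emph{The claimed reduction is false.} Cartesianness of the whole cube is not equivalent to Cartesianness of that face. For $n=1$ the face is the edge $S(\varnothing)\to S(0)$. Take $S(\varnothing)=a\in\Ae_1$ nonzero; the coCartesian cube has $S(0)\simeq{}^*i_0a\simeq 0$ and $S(1)\simeq a$. The square is Cartesian, but the face is not.

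\emph{Smaller inaccuracies.} The base case $n=0$ is trivial, because then $\Ae_0$ is all of $\Ce$ and coCartesian just means equivalence. The edges inside the face of subdivisions ending in $n$ are not constrained by $\Le$; only the edges $m\to mn$ are.

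\emph{Missing converse.} You give no argument that Cartesianness forces the edges out of $\varnothing$ to be coCartesian. The paper has to prove this separately.

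\textbf{The missing idea} is the one you set aside: take cofibers in the \emph{first} coordinate, i.e.\ along the edges $m\to 0m$. These need not be coCartesian, because they are exactly the edges being collapsed.
\begin{itemize}
\item The cofiber of the $(\Ce,\Ae_0)$-coCartesian edge $S(\varnothing)\to S(0)$ lies in ${}^\perp\Ae_0=\Ae_1+\dots+\Ae_n$, by the proof of Proposition \ref{prop:admiss}.
\item Apply Lemma \ref{lem:induced} to the squares with horizontal edges $m\to m'$ and $0m\to 0m'$, both coCartesian into the same $\Ae_k$. This shows that $\text{Cof}(q\circ S)$ is a cube of the same kind for the shorter SOD $(\Ae_1,\dots,\Ae_n)$.
\item After $n$ steps you get a coCartesian edge into $\Ae_n$ with source in $\Ae_n$. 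That edge is an equivalence, so $q\circ S$ is Cartesian.
\end{itemize}

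For the converse, the paper proceeds in two stages.
\begin{itemize}
\item It first propagates the conditions on the edges $n_0\dots n_{j-1}\to n_0\dots n_j$ with $j\geq 1$ through all $\text{Cof}^i(q\circ\overline L)$, using the same lemma.
\item It then runs a reverse induction from $i=n$. It checks that $\text{Cof}^n(q\circ\overline L)$ is an equivalence with target in $\Ae_n$. For the edges $\varnothing\to n_0$ with $n_0\geq 1$ it uses Lemma \ref{lem:induced} backwards. For $\varnothing\to 0$ it uses the criterion that an edge into $\Ae_i$ whose fiber lies in ${}^\perp\Ae_i$ is coCartesian.
\end{itemize}

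Your last-coordinate induction can also be salvaged, but only with steps your proposal does not contain.
\begin{itemize}
\item Replace $S(\varnothing)$ by $y:=\lim S\vert_{\text{sd}([n-1])\setminus\varnothing}$, which lies in $\Ae_0+\dots+\Ae_{n-1}$. The induction hypothesis does apply to this modified face.
\item Use $S(mn)\simeq{}^*i_n S(m)$ for nonempty $m$, together with exactness of ${}^*i_n$, to obtain $\lim S\vert_{\text{sd}([n])\setminus\varnothing}\simeq y\times_{{}^*i_n y}S(n)$.
\item Conclude in both directions by comparing fibers.
\end{itemize}
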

\begin{proof}
	Our first step is to show that for every $S \in \Se_\text{coCart}$, $q \circ S$ is a limit diagram (i.e. $S$ is a $\pi$-limit diagram by Lemma \ref{lem:underlying}). Specifically, we prove by induction that for every $0 \leq i \leq n$ the cube $\text{Cof}^i(q \circ S) : \text{sd}([n-i]) \to \Ce$ satisfies the following properties:
	\begin{enumerate}
		\item We have $\text{Cof}^i(q \circ S)(\varnothing) \in \Ae_i + \dots + \Ae_n$.
		\item For every $0 \leq j \leq n-i$ and a subdivision $n_0n_1\dots n_j$, the edge \[\text{Cof}^i(q\circ S)(n_0n_1\dots n_{j-1} \to n_0n_1\dots n_j)\] is $(\Ce,\Ae_{n_j+i})$-coCartesian.
	\end{enumerate}
	If the conditions above are satisfied for $i=n$, then $\text{Cof}^n(q \circ S)$ is an equivalence, i.e. a biCartesian edge. This would imply that $q \circ S$ is a biCartesian cube, by Lemma \ref{lem:1}.
	
	For $i = 0$, the conditions hold by definition of $\Se_\text{coCart}$. For $1 \leq i \leq n$, we have
	\[
	\text{Cof}^i(q \circ S)(\varnothing) = \text{Cof}\{\text{Cof}^{i-1}(q\circ S)(\varnothing) \overset{f}\longrightarrow \text{Cof}^{i-1}(q\circ S)(0)\}.
	\]
	By the inductive hypothesis, $f$ is a $(\Ce,\Ae_{i-1})$-coCartesian edge with source in $\Ae_{i-1}+\dots+\Ae_n$ and hence, $\text{Cof}(f) \in \Ae_{i-1}+ \dots+ \Ae_n$. In the proof of \ref{prop:admiss} it is shown that $f$ being $(\Ce,\Ae_{i-1})$-coCartesian implies that $\text{Cof}(f) \in {^\perp\Ae_{i-1}}$. Thus, $\text{Cof}^i(q \circ S)(\varnothing) = \text{Cof}(f) \in \Ae_i+\dots+\Ae_n$, which proves (1). 
	
	For (2), let $n_0n_1\dots n_j$ be a subdivision. We have a commutative square
	\[\begin{tikzcd}[row sep=large]
	{\text{Cof}^{i-1}(q\circ S)(m_0m_1\dots m_{j-1})} & {\text{Cof}^{i-1}(q\circ S)(m_0m_1\dots m_j)} \\
	{\text{Cof}^{i-1}(q\circ S)(0m_0m_1\dots m_{j-1})} & {\text{Cof}^{i-1}(q\circ S)(0m_0m_1\dots m_j)}
	\arrow[from=1-1, to=1-2]
	\arrow["f"', from=1-1, to=2-1]
	\arrow["g", from=1-2, to=2-2]
	\arrow[from=2-1, to=2-2]
\end{tikzcd}\]
where $m_k = n_k + 1$ and $\text{Cof}^i(q\circ S)(n_0n_1\dots n_{j-1} \to n_0n_1\dots n_j)$ is the induced map
	\[
	e : \text{Cof}(f) \longrightarrow \text{Cof}(g).
	\]
	Furthermore, by the inductive hypothesis, the horizontal maps of the square above are $(\Ce,\Ae_{m_j + i-1})$-coCartesian (note that $m_j + i -1 = n_j + i$). Applying Lemma \ref{lem:induced} yields that $e$ is $(\Ce,\Ae_{n_j+i})$-coCartesian, proving our claim.
	
We now apply the dual of Lemma \ref{lem:relative} to deduce that the restriction functor
\[
\text{Fun}'_{\Delta^{n+1}}(\text{sd}([n]),\Pi(\Ae_0,\dots,\Ae_n)) \longrightarrow \text{Fun}'{\Delta^{n+1}}(\text{sd}([n])\setminus\varnothing,\Pi(\Ae_0,\dots,\Ae_n))
\]
is a trivial Kan fibration, where
\begin{itemize}
\item $\text{Fun}'_{\Delta^{n+1}}(\text{sd}([n]),\Pi(\Ae_0,\dots,\Ae_n)) \subset \text{Fun}_{\Delta^{n+1}}(\text{sd}([n]),\Pi(\Ae_0,\dots,\Ae_n))$ is the full subcategory of functors $S$ which are $\pi$-limit diagrams,
\item $\text{Fun}'_{\Delta^{n+1}}(\text{sd}([n])\setminus\varnothing,\Pi(\Ae_0,\dots,\Ae_n))\subset \text{Fun}_{\Delta^{n+1}}(\text{sd}([n])\setminus\varnothing,\Pi(\Ae_0,\dots,\Ae_n))$ is the full subcategory of functors $L$ that admit a $\pi$-limit.
\end{itemize}
Applying \ref{lem:underlying} and the fact that $\Ce$ has finite limits, we have
\[
\text{Fun}'_{\Delta^{n+1}}(\text{sd}([n])\setminus\varnothing,\Pi(\Ae_0,\dots,\Ae_n)) = \text{Fun}_{\Delta^{n+1}}(\text{sd}([n])\setminus\varnothing,\Pi(\Ae_0,\dots,\Ae_n)).
\]
Moreover, applying the above, we have a well-defined inclusion
\[
\Se_\text{coCart} \lhook\joinrel\longrightarrow \text{Fun}'_{\Delta^{i+1}}(\text{sd}([n]), \Pi(\Ae_0,\dots,\Ae_n)).
\]
We claim that the diagram
\[\begin{tikzcd}
	{\Se_\text{coCart}} & {\text{Fun}'_{\Delta^{i+1}}(\text{sd}([n]), \Pi(\Ae_0,\dots,\Ae_n))} \\
	\Le & {\text{Fun}_{\Delta^{i+1}}(\text{sd}([n])\setminus \varnothing, \Pi(\Ae_0,\dots,\Ae_n))}
	\arrow[hook, from=1-1, to=1-2]
	\arrow[from=1-1, to=2-1]
	\arrow[from=1-2, to=2-2]
	\arrow[hook, from=2-1, to=2-2]
\end{tikzcd}\]
is a pullback square of simplicial sets.
We thus need to show that for every $L \in \Le$, the corresponding $\pi$-limit diagram $\overline{L}$ is an object of $\Se_\text{coCart}$. Specifically, we will prove that for $0 \leq i \leq n$, $\text{Cof}^{i}(q \circ \overline{L})$ has the following property: For every subdivision $n_0n_1\dots n_j$ of $[n-i]$, the edge \[
\text{Cof}^i(q\circ \overline{L})(n_0n_1\dots n_{j-1} \to n_0n_1\dots n_j)
\]
	is $(\Ce,\Ae_{n_j+i})$-coCartesian. For $i=0$ this would imply our claim. This is divided into two parts:
	\begin{enumerate}[label=(\alph*)]
		\item We show by induction starting at $i=0$ that the above condition holds for $j \geq 1$.
		\item Using (a), we show by reversed induction starting at $i=n$ that the above condition holds for $j=0$.
	\end{enumerate}
	Part (a) for $i=0$ holds by definition of $\Le$. For $1 \leq i \leq n$, we refer to the induction step proving condition (2) at the beginning of this proof.
	Part (b) for $i=n$ holds since $q \circ \overline{L}$ is a biCartesian cube and thus, $\text{Cof}^{n}(q \circ \overline{L})$ is an equivalence, and has target in $\Ae_n$. The latter claim needs to be proved: For $j \geq 1$, consider the subposet \[\text{sd}([j-1]) \cong B_j := \{n_0n_1\dots n_k \in \text{sd}([j]) \mid n_k = j\} \subset \text{sd}([j]).\]
    We have
    \begin{align*}
    \text{Cof}^n(q\circ \overline{L})(0) &= \text{Cof}\{\text{Cof}^{n-1}(q\circ \overline{L})(1 \to 01)\} \tag{$*$}\\
    &= \text{Cof}\{\text{Cof}^{n-1}(q\circ \overline L\vert_{B_{1}})\}\\
    & \hspace{2mm}\vdots\\
    &= \text{Cof}^n(q\circ \overline{L}\vert_{B_{n}}).
    \end{align*}
    Now, notice that $q\circ \overline{L}\vert_{B_{n}}$ is a diagram in $\Ae_n$, by definition of $\Le$. Thus, we have
    \[
    \text{Cof}^n(q\circ \overline{L})(0) = \text{Cof}^n(q\circ \overline{L}\vert_{B_{n}}) \in \Ae_n,
    \]
    proving the claim.
	
	Let $0 \leq i \leq n-1$ and $n_0$ a subdivision of $[n-i]$ with $n_0 \geq 1$. We have a commutative square
	\[\begin{tikzcd}[row sep=large]
	{\text{Cof}^{i}(q \circ \overline{L})(\varnothing)} & {\text{Cof}^{i}(q \circ \overline{L})(n_0)} \\
	{\text{Cof}^{i}(q \circ \overline{L})(0)} & {\text{Cof}^{i}(q \circ \overline{L})(0n_0)s}
	\arrow[from=1-1, to=1-2]
	\arrow["f"', from=1-1, to=2-1]
	\arrow["g", from=1-2, to=2-2]
	\arrow[from=2-1, to=2-2]
\end{tikzcd}\]
	and the edge of $\Ce$ corresponding via $\text{Cof}^{i+1}(q \circ \overline{L})$ to the map $\varnothing \to n_0 -1$
	is the induced map
	\[
	\text{Cof}(f) \longrightarrow \text{Cof}(g)
	\]
	which is, by the inductive hypothesis, $(\Ce, \Ae_{n_0+i})$-coCartesian. Furthermore, by (a) the lower horizontal map of the square above is also $(\Ce,\Ae_{n_0+i})$-coCartesian. By Lemma \ref{lem:induced}, this implies that the upper horizontal map is $\Ae_{n_0+i}$-coCartesian. Now, suppose that $n_0 = 0$. It follows from the inductive hypothesis that we have
	\[
	\text{Cof}^{i+1}(q \circ \overline{L})(m_0\dots m_j) \in \Ae_{m_j+i+1}
	\]
	for every subdivision $m_0\dots m_j$ of $[n-i-1]$ with $j\geq 0$. In particular, since $\text{Cof}^{i+1}(q \circ \overline{L})$ is a biCartesian cube, we have
	\[
	\text{Cof}^{i+1}(q \circ \overline{L})(\varnothing) \in \text{Span}(\Ae_{i+1},\dots,\Ae_n).
	\]
	Moreover, this object is the cofiber of the edge
    \[
    \text{Cof}^i(q \circ \overline{L})(\varnothing) \overset{e}\longrightarrow \text{Cof}^i(q \circ \overline{L})(0).
    \]This implies that $\text{Fib}(e) \in \text{Span}(\Ae_{i+1},\dots, \Ae_n) \subset {^\perp \Ae_i}$. In the proof of \ref{prop:admiss} it is shown that in this situation $e$ is $\Ae_i$-coCartesian, provided that $\text{Cof}^i(q \circ \overline{L})(0) \in \Ae_i$. This follows from a calculation analogous to $(*)$, by considering $q \circ \overline{L}$ restricted to the subposet
    \[
    \{n_0n_1\dots n_j \in \text{sd}([n]) \mid n_j = i\} \subset \text{sd}([n]). \qedhere
    \]
\end{proof}

\begin{cor}
There exists an equivalence of $\infty$-categories
\[
\Ce \longrightarrow \Le. \pushQED{\qed} \qedhere \popQED
\]
\end{cor}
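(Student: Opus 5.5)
The plan is to combine the two trivial Kan fibrations established immediately before this corollary, which both have source $\Se_\text{coCart}$. The preceding proposition says that evaluation at $\varnothing$, namely $\mathrm{ev}_\varnothing : \Se_\text{coCart} \to \Ce$, is a trivial Kan fibration. The preceding theorem says that the restriction functor $\mathrm{res} : \Se_\text{coCart} \to \Le$ is a trivial Kan fibration. Trivial Kan fibrations between $\infty$-categories are categorical equivalences, and they admit sections.

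Concretely, I would choose a section $s : \Ce \to \Se_\text{coCart}$ of $\mathrm{ev}_\varnothing$. Such a section exists because every simplicial set is cofibrant, so the lifting problem against $\varnothing \to \Ce$ has a solution. I then define the desired functor as the composite
\[
\Ce \overset{s}\longrightarrow \Se_\text{coCart} \overset{\mathrm{res}}\longrightarrow \Le .
\]
Here $s$ is an equivalence, since it is a section of an equivalence and hence an equivalence by two-out-of-three. The restriction functor is an equivalence by the theorem. Therefore the composite is an equivalence of $\infty$-categories.

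Informally, this functor sends $x \in \Ce$ to the oplax-limit datum obtained by successively choosing locally $\pi$-coCartesian edges out of $x$ and then forgetting the vertex $\varnothing$. Conversely, an inverse is obtained by choosing a section of $\mathrm{res}$ and evaluating at $\varnothing$; by Lemma \ref{lem:underlying}, this evaluation computes the limit of $q \circ L$ in $\Ce$.

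There is no substantial obstacle here, since all the work was done in the preceding theorem and proposition. The only point that needs care is the choice of sections. The functor is therefore canonical only up to a contractible space of choices, which is exactly the same ambiguity present in the Grothendieck construction.
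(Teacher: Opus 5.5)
Your proposal is correct and is the argument the paper intends. The corollary is stated without proof because it follows at once by composing a section $s : \Ce \to \Se_\text{coCart}$ of the trivial Kan fibration given by evaluation at $\varnothing$ (the preceding proposition) with the trivial Kan fibration $\Se_\text{coCart} \to \Le$ from the preceding theorem.
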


\begin{rem}
Let $\Ce$ be a stable $\infty$-category equipped with a locally coCartesian semiorthogonal decomposition $(\EuScript A_0,\dots,\EuScript A_n)$. Then, in a sense not made precise in this work (see \cite[§2]{lurie-goodwillie} for a rigorous treatment), the locally coCartesian fibration $p : \chi(\Ae_0,\dots , \Ae_n) \to \Delta^n$ is classified by an \textit{oplax-commutative $n$-simplex} or in other words an \textit{oplax functor} $F : \Delta^n \to \Ce at_\infty^2$ into the $(\infty,2)$-category of $\infty$-categories. The oplax limit of $F$ in $\Ce at_\infty^2$ is equivalent to the $\infty$-category $\Le = \text{oplax-lim}(p)$. From this point of view, the above corollary says that $\Ce$ is the oplax limit of $F$ in $\Ce at_\infty^2$. Informally, this means that $\Ce$ can be reconstructed from the stable subcategories $\Ae_0,\dots , \Ae_n$ together with gluing data in the form of functors $\Ae_i \to \Ae_j$ for $i < j$ fitting into a coherent system of natural transformations.
\end{rem}

\subsection{An explicit example: Beilinson's exceptional collection}\label{subsec:beilinson}

Let $k$ be an algebraically closed field and $n \geq 0$. In \cite{beilinson}, Beilinson proved that $D^b_\text{ord}(\text{Coh}(\mathbb P^n))$ is equivalent to the derived category of $k$-linear representations of the quiver
\[
\begin{tikzcd}[column sep = large]
	0 \ar[r,"x^{(1)}_0", shift left = 2, bend left] \ar[r, shift left = 1, draw=none, "\vdots" description] \ar[r,"x^{(1)}_n"' , shift right = 2, bend right] & 1 \ar[r,"x^{(2)}_0", shift left = 2, bend left] \ar[r, shift left = 1, draw=none, "\vdots" description] \ar[r,"x^{(2)}_n"' pos= 0.51 , shift right = 2, bend right] & \cdots \ar[r,"x^{(n)}_0", shift left = 2, bend left] \ar[r, shift left = 1, draw=none, "\vdots" description] \ar[r,"x^{(n)}_n"' pos= 0.49 , shift right = 2, bend right] & n
\end{tikzcd}
\]
with commutativity relations
\[
x_i^{(t+1)}x^{(t)}_j = x_j^{(t+1)}x^{(t)}_i
\]
for $0 \leq i,j \leq n$ and $1\leq t \leq n-1$.
By applying the reconstruction theorem of §\ref{subsec:sod}.\ref{par:recons}, we obtain a homotopy-coherent version of this statement.\\

We consider $\P^n = \text{Proj}(k[x_0,\dots,x_n])$. For $i \in \Z$, we denote by
\[
\Oc_{\P^n}(i) := \widetilde{k[x_0,\dots,x_n](i)}
\]
the \textit{$n$-th twist of $\Oc_{\P^n}$}. Explicitly, for a homogeneous polynomial $f \in k[x_0,\dots,x_n]$ of positive degree, the sections $\Gamma(D(f),\Oc_{\P^n}(i))$ on the basic open $D(f)$ are given by the homogeneous elements of degree $i$ in the localization $k[x_0,\dots,x_n]_f$. The global sections $\Gamma(\P^n, \Oc_{\P^n}(i))$ are then given by the homogeneous polynomials of degree $i$ in $k[x_0,\dots,x_n]$.\\

The following is a well-known theorem due to A. I. Beilinson, see \cite{beilinson}.

\begin{thm}
We have a full exceptional collection
\[(\Oc_{\P^n}, \Oc_{\P^n}(1), \dots, \Oc_{\P^n}(n))\]
of $D^b_\text{ord}(\text{Coh}(\P^n))$. In particular, this gives a semiorthogonal decomposition
\[
(A_0,A_1, \dots, A_n)
\]
where $A_i$ is the triangulated subcategory generated by $\Oc_{\P^n}(i)$.
\qed
\end{thm}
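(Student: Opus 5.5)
The statement has two parts: exceptionality of the collection, and generation. By Corollary \ref{prop:6} (iii), the second sentence ("in particular") then follows formally. Orthogonality of $(A_0,\dots,A_n)$ in the paper's convention means $\text{Hom}(A_j,A_i[m])=0$ for $i<j$. Fullness gives $A_0+\dots+A_n=D^b(\text{Coh}(\P^n))$. So the plan splits into a cohomological computation and a generation argument.

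For exceptionality, I would reduce everything to sheaf cohomology of line bundles. For $i,j\in\{0,\dots,n\}$ we have
\[
\text{Ext}^m(\Oc(j),\Oc(i)) \cong H^m(\P^n,\Oc(i-j)).
\]
Serre's computation of $H^*(\P^n,\Oc(d))$ (via the Čech complex for the standard cover $D(x_0),\dots,D(x_n)$) gives three facts:
\begin{itemize}
\item $H^m(\P^n,\Oc(d))=0$ for $0<m<n$;
\item $H^0(\P^n,\Oc(d))$ is the space of degree-$d$ forms, hence $0$ for $d<0$ and $k$ for $d=0$;
\item $H^n(\P^n,\Oc(d))\cong H^0(\P^n,\Oc(-d-n-1))^\vee$, which vanishes for $d>-n-1$.
\end{itemize}
Since $-n\le i-j<0$ whenever $i<j$, all these Ext groups vanish. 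For $i=j$ we get $\text{End}(\Oc(i))=k$ concentrated in degree $0$, so each $A_i$ is equivalent to $D^b(\text{Vect}_k^{\text{fin}})$.

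For fullness, let $\mathcal T=A_0+\dots+A_n$, a triangulated subcategory. First, I would show that $\Oc(d)\in\mathcal T$ for all $d\in\Z$, using the Koszul resolution of $k$ over $k[x_0,\dots,x_n]$ sheafified. This gives an exact sequence
\[
0\to\Oc(d-n-1)\to\Oc(d-n)^{\binom{n+1}{n}}\to\dots\to\Oc(d-1)^{n+1}\to\Oc(d)\to 0
\]
on $\P^n$, since the irrelevant ideal sheafifies to zero. Any $n+1$ consecutive twists therefore generate the next twist up and the next twist down, so by induction in both directions every $\Oc(d)$ lies in $\mathcal T$. Second, every coherent sheaf $F$ admits a surjection from a sum of $\Oc(-d)$ for $d\gg0$. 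Iterating this and using Hilbert's syzygy theorem on the graded module $\Gamma_*(F)$, I obtain a finite resolution of $F$ by finite sums of line bundles $\Oc(d)$. Hence $F\in\mathcal T$. Finally, every object of $D^b$ is an iterated extension of shifts of its cohomology sheaves (truncation triangles), so $\mathcal T$ is everything.

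The main obstacle is the finite-resolution step. One must ensure the resolution terminates, either via the syzygy theorem for graded $k[x_0,\dots,x_n]$-modules and exactness of sheafification, or alternatively by resolving until the kernel is locally free and handling it by the same argument. I would take the graded module route and cite the standard cohomology computations, rather than reprove them.
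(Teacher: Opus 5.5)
Your argument is correct in substance. Note that the paper does not prove this statement: it imports it from \cite{beilinson} and closes it with \qed, so you are supplying a proof where the paper only cites one.

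Beilinson's own argument for fullness goes through the resolution of the diagonal, i.e. the Koszul-type exact sequence
\[
0\to \Oc(-n)\boxtimes\Omega^n(n)\to\cdots\to\Oc(-1)\boxtimes\Omega^1(1)\to\Oc\boxtimes\Oc\to\Oc_\Delta\to 0
\]
on $\P^n\times\P^n$. Since $\Oc_\Delta$ is the kernel of the identity functor, this builds every $F$ by iterated cones from shifts of the objects $\Oc(-p)\otimes R\Gamma(\P^n,F\otimes\Omega^p(p))$, $0\le p\le n$. Twisting by $\Oc(n)$ then gives the collection $\Oc,\dots,\Oc(n)$.

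You instead stay on $\P^n$. You use Serre's computation for semiorthogonality, the Koszul complex on $\P^n$ to reach every twist $\Oc(d)$, and then finite resolutions by sums of line bundles together with truncation triangles. Your route is more elementary and self-contained. Beilinson's yields a functorial, explicit presentation of each object (the Beilinson spectral sequence), which is the real content behind the quiver description that the paper recovers in \S\ref{subsec:beilinson} as an oplax limit. Both routes land in the triangulated (not merely thick) closure, which is what the paper's notion of sum requires, so neither needs an idempotent-completeness argument.

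One step needs repair. The module $\Gamma_*(F)=\bigoplus_{d\in\Z}H^0(\P^n,F(d))$ is in general not finitely generated; for a skyscraper sheaf it is nonzero in every negative degree. Without finite generation, the syzygy theorem only produces free modules of infinite rank, whose sheafifications are not coherent. Apply it instead to a finitely generated graded module $M$ with $\widetilde M\cong F$, e.g. a high-degree truncation of $\Gamma_*(F)$. Sheafifying a finite graded free resolution of $M$ then gives the resolution you want.

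Your fallback of resolving until the kernel is locally free does not close as stated, since a vector bundle on $\P^n$ need not split into line bundles. Making it work would require using $\text{Ext}^{>n}=0$ to split $F$ off as a direct summand of a complex of sums of line bundles. You would then also need a separate argument that $A_0+\dots+A_n$ is closed under direct summands.
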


Thus, applying Proposition \ref{prop:6}, we obtain a semiorthogonal decomposition of $D^b(\text{Coh}(\P^n))$ of the form
\[(\Ae_0,\Ae_1,\dots, \Ae_n)\]
where $\Ae_i$ is the stable subcategory generated by $\Oc_{\P^n}(i)$. Since $\mathcal O_{\P^n}(i)$ is exceptional, the objects of $\Ae_i$ are, up to equivalence, finite direct sums of shifts of $\Oc_{\P^n}(i)$. Thus, there is an equivalence
\[
\Ae_i \simeq D^b(\text{Vect}_k^\text{fin}).\]

\begin{thm}\label{prop:loc-beilinson}
The SOD $(\Ae_0,\Ae_1,\dots,\Ae_n)$ of $\Ce = D^b(\text{Coh}(\P^n))$ is locally coCartesian.
\end{thm}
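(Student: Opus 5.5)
By Corollary \ref{cor:loccart}, being locally coCartesian amounts to the following: for every $0 \leq i < j \leq n$ and every $a_i \in \Ae_i$, there is an $(\Ae_i,\Ae_j)$-coCartesian edge $a_i \to a_j$ with $a_j \in \Ae_j$. That is, the functor $\text{Map}_\Ce(a_i,-)$ restricted to $\Ae_j$ must be corepresentable by some object of $\Ae_j$. I will construct this corepresenting object explicitly, using that $\Ae_j$ is generated by the single exceptional object $\Oc(j)$. The argument is close to a left-adjoint construction, but only relative to the pair $(\Ae_i,\Ae_j)$. I will not use Proposition \ref{prop:cart-admiss}, since left admissibility of each $\Ae_j$ in all of $\Ce$ is exactly what is at stake.

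Every object of $\Ae_i$ is a finite sum of shifts of $\Oc(i)$, and both the coCartesian property and finite sums and shifts are respected (initial objects in $\Ce_{a/}\times_\Ce \Ae_j$ are compatible with finite sums and with shifts, since $\text{Map}$ is exact in each variable). So it suffices to treat $a_i = \Oc(i)$.

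Put $V := \text{Hom}(\Oc(i),\Oc(j)) = \Gamma(\P^n,\Oc(j-i))$, which is the space of homogeneous polynomials of degree $j-i$. All higher Ext groups between these sheaves vanish because $\Oc(j-i)$ has no higher cohomology for $0\le j-i\le n$. Let $V^\vee$ be the dual vector space. Define $a_j := V^\vee \otimes \Oc(j)$, which is a finite direct sum of copies of $\Oc(j)$ and so lies in $\Ae_j$. Let $e: \Oc(i) \to V^\vee\otimes\Oc(j)$ be the coevaluation map, whose components are a basis of $V$; this is the map $\oplus\{\text{monomials}\}$ appearing in the introduction.

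To check that $e$ is initial in $\Ce_{\Oc(i)/}\times_\Ce\Ae_j$, I would verify that for every $b\in\Ae_j$ the precomposition map
\[
e^* : \text{Map}_\Ce(V^\vee\otimes\Oc(j), b) \longrightarrow \text{Map}_\Ce(\Oc(i),b)
\]
is a homotopy equivalence. This is equivalent to initiality by the standard identification of fibers of undercategory projections with mapping spaces. Both sides are exact in $b$, and $\Ae_j$ is the stable closure of $\Oc(j)$, so it is enough to check $b = \Oc(j)[m]$ for all $m$, and this can be done on homotopy groups, that is, on Ext groups. For $m=0$ the map is $V\otimes \text{End}(\Oc(j)) = V \to V$, which is the identity because $\Oc(j)$ is exceptional. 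For $m\neq 0$ both sides vanish: the source because $\Oc(j)$ is exceptional, and the target because $H^{m}(\P^n,\Oc(j-i))=0$ for $m\ne0$. There is a standard lemma that a map in a stable $\infty$-category inducing isomorphisms on all $\pi_*\text{Map}(-,b[m])$ for $b$ in a generating set gives an equivalence of mapping spaces for all $b$ in the stable closure. I would invoke it via the five lemma on the long exact sequences coming from fiber sequences.

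The main obstacle is the cohomology input: the vanishing $H^m(\P^n,\Oc(d))=0$ for $m>0$ and $0\le d\le n$, together with the identification $H^0(\P^n,\Oc(d)) = k[x_0,\dots,x_n]_d$. This is the classical computation (for instance via the \v{C}ech complex), and I would cite it. A further point to handle is making precise the passage from ``equivalence on generators'' to initiality as an $\infty$-categorical statement. For that I would use exactness of $\text{Map}_\Ce(-,-)$ in the spectral sense (or the fact that the class of $b$ for which $e^*$ is an equivalence is a stable subcategory) together with the shift-closure argument above.
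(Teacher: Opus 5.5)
Your proposal is correct and follows essentially the same route as the paper. You reduce to the source $\Oc(i)$ via compatibility with finite sums, as in the paper's lemma on sums of coCartesian edges. You take the coevaluation/monomial map $\Oc(i)\to\bigoplus_{M_{j-i}}\Oc(j)$, and you check initiality by showing that precomposition is an isomorphism on $\text{Hom}_{\text{h}\Ce}(-[k],b)$ for all $k$, using $H^m(\P^n,\Oc(d))=0$ for $m\neq 0$, $d\geq 0$.

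The differences are cosmetic. You reduce the target $b$ to shifts of $\Oc(j)$ by a five-lemma closure argument, where the paper writes $b$ directly as a finite sum of shifts of $\Oc(j)$. You also explicitly handle shifts in the source reduction, which the paper leaves implicit.
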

\begin{proof}
	Let $0 \leq i< j \leq n$. By Lemma \ref{lem:sumcocart} below, it suffices to construct an $(\Ae_i,\Ae_j)$-coCartesian edge with source $\Oc_{\P^n}(i)$. For $t \geq 0$, let $M_t$ be the set of degree $t$ monomials in the variables $x_0,\dots,x_n$. Throughout the proof, we will use the fact that
    \[
    \mathcal Hom(\Oc_{\P^n}(i),\Oc_{\P^n}(j)) \cong \Oc_{\P^n}(j-i).
    \]
    
    Consider the map
	\[
	f : \Oc_{\P^n}(i) \longrightarrow \bigoplus_{M_{j-i}} \Oc_{\P^n}(j) =: \Fc
	\]
    given on the $p$ component for $p \in M_{j-i}$, by multiplication by $p$. Since $p$ has degree $j-i$, this is well-defined.
	We claim that it is $(\Ae_i,\Ae_j)$-coCartesian, i.e., an initial object of $\De :=\Ce_{\Oc_{\P^n}(i)/}\times_\Ce \Ae_j$. Let $g : \Oc_{\P^n}(i) \to \Gc_\bullet$ be a map. Note that we have a homotopy fiber sequence
    \[
    \text{Map}_\De(f,g) \to \text{Map}_\Ce(\Fc,\Gc_\bullet) \to \text{Map}_\Ce(\Oc_{\P^n}(i),\Gc_\bullet).
    \]
    Associated to this is a long exact sequence
    \[
    \dots \to \text{Hom}_{\text{h}\Ce}(\Oc_{\P^n}(i)[k+1],\Gc_\bullet) \to \pi_k(\text{Map}_\De(f,g)) \to \text{Hom}_{\text{h}\Ce}(\Fc[k],\Gc_\bullet) \to \text{Hom}_{\text{h}\Ce}(\Oc_{\P^n}(i)[k],\Gc_\bullet) \to \cdots
    \]
    Thus, for $f$ to be $(\Ae_i,\Ae_j)$-coCartesian it suffices to prove that the map
    \[
    \text{Hom}_{\text{h}\Ce}(\Fc[k],\Gc_\bullet) \longrightarrow \text{Hom}_{\text{h}\Ce}(\Oc_{\P^n}(i)[k],\Gc_\bullet) 
    \]
    is an isomorphism for all $k\in \Z$. We have
    \allowdisplaybreaks
    \begin{align*}
        \text{Hom}_{\text{h}\Ce}(\Oc_{\P^n}(i)[k],\Gc_\bullet) &\cong \text{Hom}_{\text{h}\Ce}(\Oc_{\P^n}(i)[k],\bigoplus_{t \in \Z} \Oc_{\P^n}(j)[t]^{\oplus m_t})\\
        &\cong\bigoplus_{t\in \Z} \text{Hom}_{\text{h}\Ce}(\Oc_{\P^n}(i)[k],\Oc_{\P^n}(j)[t])^{\oplus m_t}\\
        &\cong \bigoplus_{t\in \Z} \text{Hom}_{\text{h}\Ce}(\Oc_{\P^n}(i),\Oc_{\P^n}(j)[t-k])^{\oplus m_t}\\
        &\cong \bigoplus_{t\in \Z} \text{Ext}_{\text{h}\Ce}^{t-k}(\Oc_{\P^n}(i),\Oc_{\P^n}(j))^{\oplus m_t}\\
        &\cong \bigoplus_{t\in \Z} H^{t-k}(\P^n,\Oc_{\P^n}(j-i))^{\oplus m_t}\\
        &\cong H^0(\P^n,\Oc_{\P^n}(j-i))^{\oplus m_k}\\
        &\cong (k[x_0,\dots,x_n]_{j-i})^{\oplus m_k}.
    \end{align*}
    Similarly, we compute
    \begin{align*}
        \text{Hom}_{\text{h}\Ce}(\Fc[k],\Gc_\bullet) &\cong \text{Hom}_{\text{h}\Ce}(\bigoplus_{M_{j-i}} \Oc_{\P^n}(j)[k],\bigoplus_{t \in \Z} \Oc_{\P^n}(j)[t]^{\oplus m_t})\\
        &\cong\bigoplus_{t\in \Z} \bigoplus_{M_{j-i}} \text{Hom}_{\text{h}\Ce}( \Oc_{\P^n}(j)[k],\Oc_{\P^n}(j)[t])^{\oplus m_t}\\
        &\cong \bigoplus_{t\in \Z}\bigoplus_{M_{j-i}} H^{t-k}(\P^n,\Oc_{\P^n})^{\oplus m_t}\\
        &\cong \bigoplus_{M_{j-i}} H^0(\P^n,\Oc_{\P^n})^{\oplus m_k}\\
        &\cong (\bigoplus_{M_{j-i}} k)^{\oplus m_k}.
    \end{align*}
    Under these identifications, the map above becomes the obvious isomorphism
    \[
    (\bigoplus_{M_{j-i}} k)^{\oplus m_k} \longrightarrow (k[x_0,\dots,x_n]_{j-i})^{\oplus m_k}. \qedhere
    \]    
\end{proof}

\begin{lem}\label{lem:sumcocart}
    Let $\Ce$ be a stable $\infty$-category and $\Ae,\Be \subset \Ce$ stable subcategories. If $f : a \to b, g : a' \to b'$ are $(\Ae,\Be)$-coCartesian edges, then $f \oplus g : a \oplus a' \to b \oplus b'$ is $(\Ae,\Be)$-coCartesian.
\end{lem}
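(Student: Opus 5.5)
The plan is to reduce the claim to a statement about mapping spaces and then use that $\text{Map}_\Ce(-,y)$ turns finite direct sums into products. By Definition \ref{def:ab-cart}, $f : a \to b$ is $(\Ae,\Be)$-coCartesian exactly when $f$ is an initial object of $\De_a := \Ce_{a/} \times_\Ce \Be$. So we must show that for every object $h : a\oplus a' \to c$ of $\De_{a\oplus a'}$ (with $c \in \Be$) the mapping space $\text{Map}_{\De_{a\oplus a'}}(f\oplus g, h)$ is contractible. First note that $b \oplus b' \in \Be$ and $a \oplus a' \in \Ae$, since stable subcategories are closed under finite direct sums (biproducts are cofibers of zero maps). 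Hence $f \oplus g$ is an object of $\De_{a\oplus a'}$.

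Since $\Be \subset \Ce$ is full, mapping spaces in the undercategory are computed as in the proof of Theorem \ref{prop:loc-beilinson}. For $u : x \to y$ and $h : x \to c$, there is a homotopy fiber sequence
\[
\text{Map}_{\Ce_{x/}}(u,h) \to \text{Map}_\Ce(y,c) \xrightarrow{u^*} \text{Map}_\Ce(x,c),
\]
with the fiber taken over the point $h$. Thus $u$ is initial in $\Ce_{x/}\times_\Ce \Be$ if and only if $u^* : \text{Map}_\Ce(y,c) \to \text{Map}_\Ce(x,c)$ is a homotopy equivalence for all $c \in \Be$. One direction is immediate: a map of Kan complexes is an equivalence iff all its homotopy fibers are contractible. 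I would record this reformulation first, citing \cite[Prop. 4.2.1.5]{lurie-htt} or an equivalent statement for the identification of mapping spaces in undercategories.

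Next, $(f\oplus g)^*$ is identified with the product map
\[
f^* \times g^* : \text{Map}_\Ce(b,c) \times \text{Map}_\Ce(b',c) \longrightarrow \text{Map}_\Ce(a,c) \times \text{Map}_\Ce(a',c).
\]
This uses the equivalences $\text{Map}_\Ce(x \oplus x', c) \simeq \text{Map}_\Ce(x,c)\times\text{Map}_\Ce(x',c)$, induced by the coproduct inclusions, together with naturality of these equivalences in $f \oplus g$. The latter holds because $f\oplus g$ is, up to homotopy, the map of coproducts induced by $f$ and $g$, so it commutes with the inclusions. A product of homotopy equivalences is a homotopy equivalence, so $(f\oplus g)^*$ is one, and by the previous paragraph $f\oplus g$ is $(\Ae,\Be)$-coCartesian.

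The main obstacle is making the identification of mapping spaces in $\Ce_{x/}\times_\Ce\Be$ as a homotopy fiber precise and compatible with the coproduct decomposition. Formally, this is the statement that the forgetful map $\Ce_{x/} \to \Ce$ is a left fibration. Everything else is formal.
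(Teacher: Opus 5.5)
Your proof is correct. It rests on the same underlying fact as the paper's proof, namely that maps out of a direct sum split as a product, but it is organized differently.

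The paper works with slice categories. It notes that $\Ce_{f\oplus g/}\times_\Ce\Be\to\Ce_{a\oplus a'/}\times_\Ce\Be$ is a left fibration, so it suffices to show that its fibers are contractible. It then uses the coproduct property to identify both slices with fiber products over $\Ce$ of the corresponding slices for $f,g$ (resp.\ $a,a'$). Under this identification the fiber over $(s,t)$ becomes $\pi_1^{-1}(s)\times\pi_2^{-1}(t)$.

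You instead first reduce initiality of $u\colon x\to y$ to the condition that $u^*\colon\text{Map}_\Ce(y,c)\to\text{Map}_\Ce(x,c)$ is an equivalence for every $c\in\Be$. You then identify $(f\oplus g)^*$ with $f^*\times g^*$ via a square that commutes in $\text{hKan}$.

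The two reductions are really the same: the fiber of the paper's left fibration over $h$ is the homotopy fiber of $u^*$ over $h$. Your version has two small advantages.
\begin{itemize}
\item It only needs a square commuting up to homotopy. The paper's equality $\pi^{-1}(s,t)=\pi_1^{-1}(s)\times\pi_2^{-1}(t)$ is taken through equivalences, not isomorphisms, of slice categories. It therefore tacitly requires those equivalences to be compatible with the projections, and fibers of left fibrations to be invariant under them.
\item Your criterion is exactly the one the paper itself uses in the proof of Theorem~\ref{prop:loc-beilinson}.
\end{itemize}

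The paper's route, in turn, avoids quoting the mapping-space fiber sequence for undercategories. It stays within the language of trivial Kan fibrations and slice categories used throughout the rest of the section.
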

\begin{proof}
    By assumption, the maps
    \[
    \pi_1 : \Ce_{f/} \times_\EuScript C \EuScript B \longrightarrow \EuScript C_{a/} \times_\EuScript C \EuScript B
    \]
    and
    \[
    \pi_2 : \Ce_{g/} \times_\EuScript C \EuScript B \longrightarrow \EuScript C_{a'/} \times_\EuScript C \EuScript B
    \]
    are trivial Kan fibrations, and we need to show that
    \[
    \pi : \Ce_{f\oplus g/} \times_\EuScript C \EuScript B \longrightarrow \EuScript C_{a \oplus a'/} \times_\EuScript C \EuScript B
    \]
    is a trivial Kan fibration. Since this is always a left fibration (see \cite[Cor. 2.1.2.2]{lurie-htt}), it suffices to show that all fibers are contractible (see \cite[Lem. 2.1.3.4]{lurie-htt}). Note that we have equivalences
    \[
    \Ce_{f\oplus g/} \times_\EuScript C \EuScript B \simeq (\Ce_{f/} \times_\EuScript C \EuScript B) \times_\Ce (\Ce_{g/} \times_\EuScript C \EuScript B)
    \]
    and
    \[
    \EuScript C_{a \oplus a'/} \times_\EuScript C \EuScript B \simeq (\EuScript C_{a/} \times_\EuScript C \EuScript B) \times_\Ce (\EuScript C_{a'/} \times_\EuScript C \EuScript B).
    \]
    Thus, for $(s,t) \in (\EuScript C_{a/} \times_\EuScript C \EuScript B) \times_\Ce (\EuScript C_{a'/} \times_\EuScript C \EuScript B)$, we have
    \[
    \pi^{-1}(s,t) = \pi_1^{-1}(s) \times \pi_2^{-1}(t)
    \]
    which is contractible.
\end{proof}

The inner fibration $p : \chi(\Ae_0,\dots,\Ae_n) \rightarrow \Delta^n$ is thus locally coCartesian and $\Ce = D^b(\text{Coh}(\P^n))$ is identified with its oplax limit (see §\ref{groth-cons}.\ref{par:oplax}). The locally coCartesian fibration $p$ is classified via the Straightening equivalence (see Remark \ref{rem:straightening-locally}) by an oplax diagram $F : \Delta^n \to \mathbb Cat_\infty$. More concretely, we have $F(i) = \Ae_i$ and for $i < j$, the functor $F(i \to j)$ is the classifying functor for the coCartesian fibration $p\vert p^{-1}(\{i,j\})$, i.e., it sends $X \in \Ae_i$ to the target of a locally $p$-coCartesian edge $X \to Y$. By the proof of Theorem \ref{prop:loc-beilinson}, the functor $F(i \to j) : \Ae_i \to \Ae_j$ thus sends $\Oc_{\P^n}(i)$ to $\bigoplus_{M_{j-i}} \Oc_{\P^n}(j)$. Identifying $\Ae_i \simeq D^b(\text{Vect}_k^\text{fin})$ and using the fact that $\vert M_{j-i}\vert = \binom{n+j-i}{n}$, the classifying diagram for $n = 2$ is given by
\[\begin{tikzcd}[column sep = small]
	&& {D^b(\text{Vect}_k^\text{fin})} \\
	{D^b(\text{Vect}_k^\text{fin})} && {} && {D^b(\text{Vect}_k^\text{fin})}
	\arrow["\oplus 3", from=1-3, to=2-5]
	\arrow["\oplus 3", from=2-1, to=1-3]
	\arrow["\oplus 6"', from=2-1, to=2-5]
	\arrow[shorten <=3pt, shorten >=3pt, Rightarrow, "\eta" , from=2-3, to=1-3]
\end{tikzcd}
\]
where the natural transformation assigns to $X \in D^b(\text{Vect}_k^\text{fin})$, the obvious map
\[
\bigoplus_{M_2} X \longrightarrow \bigoplus_{M_1} \bigoplus_{M_1} X.
\]
An object of the oplax limit of this diagram is a choice of objects $X,Y,Z \in D^b(\text{Vect}_k^\text{fin})$, together with maps $f : X^{\oplus 3} \to Y, g : Y^{\oplus 3} \to Z, h: X^{\oplus 6} \to Z$ such that the square
\[
\begin{tikzcd}
    X^{\oplus 6} \ar[r, "\eta_X"] \ar[d, "h"] & X^{\oplus 9} \ar[d, "f^{\oplus 3}"]\\
    Z & Y^{\oplus 3} \ar[l, "g"] 
\end{tikzcd}
\]
commutes (in a homotopy-coherent way). Our theorem states that the $\infty$-category consisting of such objects is equivalent to $D^b(\text{Coh}(\P^n))$. To explain why this provides a homotopy-coherent analogue of Beilinson's statement, replace $X,Y,Z$ by ordinary vector spaces. Then, the requirement for the square above to commute strictly states exactly that $(X,Y,Z, f,g,h)$ is a $k$-linear representation of the Beilinson quiver. In particular, the commutativity relations of the quiver are encoded in the natural transformation $\eta$.


\printbibliography

\end{document}